\newcommand*{\refh}[2]{\hyperref[#2]{#1~\ref{#2}}}
\newcounter{ourcount}
\newcommand\be{\begin{equation}}
\newcommand\ee{\end{equation}}
\newtheorem{theorem}{Theorem}[section]
\newtheorem{proposition}[theorem]{Proposition}
\newtheorem{lemma}[theorem]{Lemma}
\theoremstyle{definition}
\theoremstyle{remark}
\newtheorem{remark}[theorem]{Remark}
\DeclareMathSymbol{\widetildesym}{\mathord}{largesymbols}{"65}
\newcommand{\N}{\mathbb{N}}
\newcommand{\Z}{\mathbb{Z}}
\newcommand{\R}{\mathbb{R}}
\newcommand{\C}{\mathbb{C}}
\newcommand{\rmt}{\mathrm{t}}
\newcommand{\rmB}{\mathrm{B}}
\newcommand{\rmH}{\mathrm{H}}
\newcommand{\rmL}{\mathrm{L}}
\newcommand{\rmR}{\mathrm{R}}
\newcommand{\rmV}{\mathrm{V}}
\newcommand{\rmX}{\mathrm{X}}
\newcommand{\bbA}{\mathbb{A}}
\newcommand{\bbB}{\mathbb{B}}
\newcommand{\bbI}{\mathbb{I}}
\newcommand{\bbM}{\mathbb{M}}
\newcommand{\bbN}{\mathbb{N}}
\DeclareRobustCommand{\bbSigma}{\mathbin{\text{\includegraphics[height=\heightof{$\mathbf{\Sigma}$}]{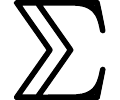}}}}
\newcommand{\calC}{\mathcal{C}}
\newcommand{\calD}{\mathcal{D}}
\newcommand{\calH}{\mathcal{H}}
\newcommand{\calR}{\mathcal{R}}
\newcommand{\calS}{\mathcal{S}}
\newcommand{\calT}{\mathcal{T}}
\newcommand{\calV}{\mathcal{V}}
\newcommand{\calX}{\mathcal{X}}
\newcommand{\frakS}{\mathfrak{S}}
\renewcommand{\epsilon}{\varepsilon}
\renewcommand{\theta}{\vartheta}
\renewcommand{\phi}{\varphi}
\renewcommand{\Gamma}{\varGamma}
\renewcommand{\Sigma}{\varSigma}
\newcommand{\id}{\mathrm{id}}
\newcommand{\ptr}{\mathrm{ptr}}
\newcommand{\ad}{\operatorname{ad}}
\newcommand{\coad}{\operatorname{coad}}
\newcommand{\lrad}{\operatorname{rad_L}}
\newcommand{\rrad}{\operatorname{rad_R}}
\newcommand{\lev}{\smash{\stackrel{\leftarrow}{\mathrm{ev}}}}
\newcommand{\lcoev}{\smash{\stackrel{\longleftarrow}{\mathrm{coev}}}}
\newcommand{\rev}{\smash{\stackrel{\rightarrow}{\mathrm{ev}}}}
\newcommand{\rcoev}{\smash{\stackrel{\longrightarrow}{\mathrm{coev}}}}
\newcommand{\eviso}[1]{\cap_{#1}}
\newcommand{\coeviso}[1]{\cup_{#1}}
\DeclareMathOperator{\disjun}{\sqcup}
\DeclareMathOperator{\din}{\dot{\Rightarrow}}
\renewcommand{\leq}{\leqslant}
\renewcommand{\geq}{\geqslant}
\newcommand{\mods}[1]{\operatorname{\mathnormal{#1}-mod}} 
\newcommand{\bimods}[1]{\operatorname{\mathnormal{#1}-bimod}}
\renewcommand{\sl}{\mathfrak{sl}}
\newcommand{\SL}{\mathrm{SL}}
\newcommand{\GL}{\mathrm{GL}}
\newcommand{\PGL}{\mathrm{PGL}}
\newcommand{\Hom}{\mathrm{Hom}}
\newcommand{\End}{\mathrm{End}}
\newcommand{\Vect}{\mathrm{Vect}}
\newcommand{\op}{\mathrm{op}}
\newcommand{\co}{\mathrm{co}}
\DeclareRobustCommand{\one}{\mathbin{\text{\includegraphics[height=\heightof{$\mathbf{1}$}]{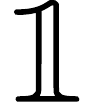}}}}
\newcommand{\Sk}{\mathrm{S}}
\newcommand{\adSk}{\check{\mathrm{S}}}
\newcommand{\Cob}{\mathrm{Cob}}
\newcommand{\adCob}{\check{\mathrm{C}}\mathrm{ob}}
\newcommand{\PB}{\mathrm{PB}}
\newcommand{\Proj}{\mathrm{Proj}}
\newcommand{\subalign}[1]{
  \vcenter{
    \Let@ \restore@math@cr \default@tag
    \baselineskip\fontdimen10 \scriptfont\tw@
    \advance\baselineskip\fontdimen12 \scriptfont\tw@
    \lineskip\thr@@\fontdimen8 \scriptfont\thr@@
    \lineskiplimit\lineskip
    \ialign{\hfil$\m@th\scriptstyle##$&$\m@th\scriptstyle{}##$\crcr
      #1\crcr
    }
  }
}
\def\clap#1{\hbox to 0pt{\hss#1\hss}}
\newcommand{\pic}[2][0]{\raisebox{-0.5\height + 2.5pt + #1pt}{\includegraphics{#2.pdf}}}
\newcommand{\eend}{\mathcal{E}} 
\newcommand{\coend}{\mathcal{L}} 
\newcommand{\copL}{\Delta}
\newcommand{\counitL}{\epsilon}
\newcommand{\intL}{\Lambda}
\newcommand{\cointL}{\Lambda^{\mathrm{co}}}
\newcommand{\Rad}{\calR}
\newcommand{\Diff}{\mathrm{Diff}}
\newcommand{\eDiff}{\tilde{\mathrm{D}}\mathrm{iff}}
\newcommand{\MCG}{\mathrm{Mod}}
\newcommand{\eMCG}{\tilde{\mathrm{M}}\mathrm{od}}
\newcommand{\repCpr}{\rho_\rmX}
\newcommand{\prepCpr}{\overline\rho_\rmX}
\newcommand{\repL}{\rho_\rmL}
\newcommand{\prepL}{\overline\rho_\rmL}
\newcommand{\repT}{\rho_\calC}
\newcommand{\prepT}{\overline\rho_\calC}
\newcommand{\erepT}{\tilde{\rho}_\calC}
\DeclareRobustCommand{\myuline}[1]{
 \ifmmode \text{\uline{$\phantom{#1}$}\llap{\contour{white}{$#1$}}}
 \else \uline{\phantom{#1}}\llap{\contour{white}{#1}} \fi
}
\newcommand{\arxiv}[2]{\href{http://arXiv.org/abs/#1}{\texttt{arXiv:#1} #2}}
\begin{document}

\raggedbottom

\title{Mapping Class Group Representations From Non-Semisimple TQFTs}

\author[De Renzi]{Marco De Renzi}
\address{Department of Mathematics, Faculty of Science and Engineering, Waseda University, 3-4-1 \={O}kubo, Shinjuku-ku, Tokyo, 169-8555, Japan} 
\email{m.derenzi@kurenai.waseda.jp}
\address{Institute of Mathematics, University of Zurich, Winterthurerstrasse 190, CH-8057 Zurich, Switzerland}
\email{marco.derenzi@math.uzh.ch}

\author[Gainutdinov]{Azat M. Gainutdinov}
\address{Institut Denis Poisson, CNRS, Universit\'e de Tours, Universit\'e d'Orl\'eans, Parc de Grandmont, 37200 Tours, France}
\address{National Research University Higher School of Economics, Usacheva str., 6, Moscow, Russia}
\email{azat.gainutdinov@lmpt.univ-tours.fr}

\author[Geer]{Nathan Geer}
\address{Mathematics \& Statistics, Utah State University, Logan, Utah 84322, USA} \email{nathan.geer@gmail.com}

\author[Patureau-Mirand]{Bertrand Patureau-Mirand}
\address{Univ. Bretagne - Sud, UMR 6205, LMBA, F-56000 Vannes, France}
\email{bertrand.patureau@univ-ubs.fr}

\author[Runkel]{Ingo Runkel}
\address{Fachbereich Mathematik, Universit\"at Hamburg,
Bundesstra\ss e 55, 20146 Hamburg, Germany}
\email{ingo.runkel@uni-hamburg.de}

\begin{abstract}
 In \cite{DGGPR19}, we constructed 3-dimensional Topological Quantum Field Theories (TQFTs) using not necessarily semisimple modular categories. Here, we study projective representations of mapping class groups of surfaces defined by these TQFTs, and we express the action of a set of generators through the algebraic data of the underlying modular category $\calC$. This allows us to prove that the projective representations induced from the non-semisimple TQFTs of \cite{DGGPR19} are equivalent to those obtained by Lyubashenko via generators and relations in \cite{L94}. Finally, we show that, when~$\calC$ is the category of finite-dimensional representations of the small quantum group of~$\sl_2$, the action of all Dehn twists for surfaces without marked points has infinite order.
\end{abstract}

\maketitle
\setcounter{tocdepth}{2}

\tableofcontents

\date{\today}

\section{Introduction}

The main goal of this paper is to explicitly characterize and compute projective representations of mapping class groups of surfaces coming from the non-semisimple 3-dimensional Topological Quantum Field Theories (TQFTs for short) constructed in \cite{DGGPR19}. In doing this, we establish an equivalence with a family of projective representations constructed by Lyubashenko, while also proving some of their remarkable properties in a concrete example.

\subsection{Background}
In \cite{T94}, Turaev introduced the original semisimple version of modular categories, 
and used them to construct 3-dimensional TQFTs. The latter are defined as symmetric monoidal functors from a category of 3-cobordisms, which carries a symmetric monoidal structure induced by disjoint union, to the category of vector spaces. The theory developed in \cite{T94} is an extension and a categorical reformulation of previous results of Reshetikhin and Turaev \cite{RT90,RT91} which produced topological invariants of links and 3-manifolds out of a particular class of Hopf algebras. Consequently, the TQFTs obtained from 
semisimple modular categories are known as Reshetikhin-Turaev TQFTs.

Around the same time, in \cite{L94} Lyubashenko introduced a more general (and not necessarily semisimple) notion of modular category, and used it to construct 3-manifold invariants and projective representations of mapping class groups of surfaces. These invariants are a categorical reformulation of Hennings' ones \cite{H96}, which are defined in terms of Hopf algebraic data, and which provide the first non-semisimple generalization of the work of Reshetikhin and Turaev. The question whether the theory of \cite{L94} is part of a TQFT, or even of an extended notion of TQFT (known as ETQFT), was addressed in \cite{KL01}. However, the ETQFTs constructed by Kerler and Lyubashenko are defined just for connected surfaces, and are monoidal only in a weaker sense, that is, not with respect to disjoint unions. 

In order to obtain TQFTs from non-semisimple modular categories (in the above sense of symmetric monoidal functors with respect to disjoint union), another ingredient was required: \textit{modified traces} \cite{GPT07,GKP10}. 
Working in the context of Hopf algebras, generalized versions of Hennings' invariants 
were defined with the help of modified traces. 
At first, this was done in the special case of the restricted quantum group of $\sl_2$ \cite{M13,BBG17}. Immediately afterwards, the construction was carried out in the general
Hopf
case, and also upgraded to TQFTs \cite{DGP17}. Finally, the formulation of these TQFTs 
for general modular
categories was given in \cite{DGGPR19}, twenty-five years after Lyubashenko's original work.

\subsection{Non-semisimple TQFTs}

To present our results, we first briefly review the TQFT defined in \cite{DGGPR19}. Let $\calC$ be a modular category in the sense of Section~\ref{sss:MTC}, which means in particular that $\calC$ is not necessarily semisimple. The category $\adCob_\calC$ of \textit{admissible cobordisms} is the symmetric monoidal category having:
\begin{itemize}
    \item \textit{Objects:} surfaces, decorated with finite sets of (oriented framed) marked points labeled by objects of $\calC$ and with Lagrangian subspaces of their first homology group.
    \item \textit{Morphisms:} cobordisms, decorated with embedded bichrome graphs and with integers called \textit{signature defects}, and subject to a crucial \textit{admissibility} condition. A \textit{bichrome graph} is a ribbon graph with red and blue parts. Its blue subgraph, composed of edges and coupons, is labeled by objects and morphisms of $\calC$ respectively, with boundary vertices corresponding to marked points on the boundary of the cobordism. Its red subgraph essentially determines a surgery link, and will be described in more detail in Section~\ref{sss:LRT-functor}. A decorated cobordism is \textit{admissible} if every connected component disjoint from the incoming boundary contains a blue edge labeled by a projective object of $\calC$. 
\end{itemize}
More details can be found in Section~\ref{sss:admissible_cobord}. The category $\adCob_\calC$ has the same objects as the cobordism category used for the Reshetikhin-Turaev TQFT \cite{T94}, but it has fewer morphisms because of the admissibility condition. 
For example, closed 3-manifolds without embedded bichrome graphs are not admissible.
On the other hand, every connected cobordism with non-empty incoming boundary is admissible, in particular every mapping cylinder.

In \cite{DGGPR19} a dual pair of TQFTs, that is, a pair of symmetric monoidal functors
\[
 \rmV_\calC : \adCob_\calC \to \Vect_\Bbbk, \quad 
 \rmV'_\calC : (\adCob_\calC)^{\op} \to \Vect_\Bbbk,
\]
is constructed (see Section~\ref{sss:TQFT-from-universal}). The duality is provided by a non-degenerate pairing $\langle \_,\_ \rangle_{\bbSigma}$ between the state spaces $\rmV'_\calC(\bbSigma)$ and $\rmV_\calC(\bbSigma)$ for each object $\bbSigma$ of $\adCob_\calC$.
The pairing is invariant in the sense that for all morphisms 
$\bbM : \bbSigma \to \bbSigma'$ and for all vectors $v \in \rmV_\calC(\bbSigma)$ and $v' \in \rmV'_\calC(\bbSigma')$ we have
\[
\big\langle v' , \rmV_\calC(\bbM)(v)
\big\rangle_{\bbSigma'} 
=
\big\langle \rmV'_\calC(\bbM)(v') , v \big\rangle_{\bbSigma}
    ~.
\]

These TQFT functors admit both an algebraic description, which is closer to Lyubashenko's approach, and a skein theoretical one, which is closer to the topological construction of \cite{BHMV95}. In particular, the algebraic model is especially convenient for discussing the contravariant TQFT $\rmV'_\calC$, while the skein one is best suited for the covariant TQFT $\rmV_\calC$. Consequently, we will use the functor $\rmV'_\calC$ in the comparison with \cite{L94}, while we will focus on the functor $\rmV_\calC$ to establish the infinite order of the action of Dehn twists in the quantum group example.

\begin{remark}
If we take $\calC$ to be semisimple, at first glance the TQFT $\rmV_\calC$ is different from the Reshetikhin-Turaev TQFT, as the source category $\adCob_\calC$ has fewer cobordisms. In order to match the two theories exactly, given a (possibly non admissible) decorated cobordism $\bbM$, simply embed an unknot labeled by the tensor unit $\one$. For admissible $\bbM$, this does not change the value of $\rmV_\calC(\bbM)$, while for $\calC$ semisimple, the tensor unit is projective, and so $\bbM$ together with this additional unknot becomes admissible.
\end{remark}

In Section~\ref{sec:Hopf}, we demonstrate our TQFT construction in more explicit terms for the case when the modular category $\calC$ is of the form $\mods{H}$ for a  factorizable ribbon Hopf algebra $H$, and we show the equivalence with the construction of \cite{DGP17}, which is also based on a factorizable ribbon Hopf algebra.
 
\subsection{Projective representations of mapping class groups}

For every natural number $g$, we consider a standard closed connected surface $\Sigma_g$ of genus $g$ and a Lagrangian subspace $\lambda_g \subset H_1(\Sigma_g;\R)$. Next, for every natural number $m$ and every $m$-tuple $\underline{V} = (V_1,\ldots,V_m) \in \calC^{\times m}$, we denote with $\bbSigma_{g,\underline{V}} = (\Sigma_g,P_{\underline{V}},\lambda_g)$ the object of $\adCob_\calC$ determined by a standard set $P_{\underline{V}} \subset \Sigma_g$
of $m$ positive marked points with labels specified by the subscript. Then, we denote with $\MCG(\Sigma_g,P_{\underline{V}})$ the mapping class group of the decorated surface $(\Sigma_g,P_{\underline{V}})$, which is the group of isotopy classes of 
decoration-preserving positive self-diffeomorphisms. 

By applying the TQFT functor $\rmV_\calC$ to mapping cylinders over $\bbSigma_{g,\underline{V}}$, one obtains a map $\repT : \MCG(\Sigma_g,P_{\underline{V}}) \to \GL_\Bbbk(\rmV_\calC(\bbSigma))$.
Since the gluing operation affects the signature defect, this is in general only a projective representation, that is, in general only the induced map 
\[
 \prepT : \MCG(\Sigma_g,P_{\underline{V}}) \to \PGL_\Bbbk(\rmV_\calC(\bbSigma))
\]
is a group homomorphism. Analogously, the TQFT $\rmV'_\calC$ defines a group homomorphism
\begin{equation}\label{eq:intro-sigma'}
  \prepT' : \MCG(\Sigma_g,P_{\underline{V}}) \to \PGL_\Bbbk(\rmV'_\calC(\bbSigma)).
\end{equation}
Here, the contravariance of $\rmV'_\calC$ is compensated by choosing the mapping cylinder for the inverse element of the mapping class group
    (see Section~\ref{sec:mapping_cyl}).

The state space $\rmV'_\calC(\bbSigma_{g,\underline{V}})$ is isomorphic to a morphism space in $\calC$, namely (see Section~\ref{sss:skein-alg-summary}),
\begin{equation}
    \label{eq:intro-iso-to-alg}
 \rmV'_\calC(\bbSigma_{g,\underline{V}}) \cong \calC(V_1 \otimes \ldots \otimes V_m \otimes \coend^{\otimes g},\one),
\end{equation}
where $\coend \in \calC$ is the coend
\[
\coend  :=  \int^{X \in \calC} X^* \otimes X.
\]
By combining \eqref{eq:intro-sigma'} with \eqref{eq:intro-iso-to-alg}, we obtain the group homomorphism
\[
 \prepCpr : \MCG(\Sigma_g,P_{\underline{V}}) \to 
 \PGL_\Bbbk
 (\calC(V_1 \otimes \ldots \otimes V_m \otimes \coend^{\otimes g},\one)).
\]
Our first main result consists in computing $\prepCpr$ on a set of generators of $\MCG(\Sigma_g,P_{\underline{V}})$ in terms of the algebraic data of $\calC$ (see Proposition~\ref{P:equivalence}).

On the other hand, in \cite{L94} Lyubashenko constructs projective representations of 
 $\MCG(\Sigma_g,P_{\underline{V}})$ on the morphism space
\[
 \calC(V_1 \otimes \ldots \otimes V_m,\coend^{\otimes g}).
\]
    This is done by explicitly giving the action of a set of generators in terms of the ribbon structure of $\calC$ and of canonical morphisms associated to $\coend$, and then verifying that these satisfy the required relations.
We denote the corresponding group homomorphism by
\[
 \prepL : \MCG(\Sigma_g,P_{\underline{V}}) \to \PGL_\Bbbk
(\calC(V_1 \otimes \ldots \otimes V_m,\coend^{\otimes g})).
\]
Our second main result is (see Theorem~\ref{T:main-equiv}):

\begin{theorem}\label{T:main-intro}
The projective representations $\prepCpr$ and $\prepL$ of $\MCG(\Sigma_g,P_{\underline{V}})$ are equivalent.
\end{theorem}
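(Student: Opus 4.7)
The plan is to construct an explicit linear isomorphism
\[
\Phi : \calC(V_1 \otimes \ldots \otimes V_m \otimes \coend^{\otimes g}, \one) \xrightarrow{\sim} \calC(V_1 \otimes \ldots \otimes V_m, \coend^{\otimes g})
\]
and to verify, on a suitable generating set of $\MCG(\Sigma_g,P_{\underline{V}})$, that $\Phi$ intertwines $\prepCpr$ and $\prepL$ up to a non-zero scalar. Because both maps take values in $\PGL_\Bbbk$, equality up to scalar on each generator is sufficient to conclude that the two projective representations are equivalent.

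For $\Phi$ I would use the canonical Hopf pairing $\omega : \coend \otimes \coend \to \one$ attached to the coend of $\calC$. Modularity of $\calC$ forces $\omega$ to be non-degenerate, hence it yields an isomorphism $\coend \xrightarrow{\sim} \coend^*$. Tensoring $g$ copies and composing with the standard ribbon Hom-tensor adjunction
\[
\calC(V_1 \otimes \ldots \otimes V_m \otimes (\coend^{\otimes g})^*, \one) \cong \calC(V_1 \otimes \ldots \otimes V_m, \coend^{\otimes g})
\]
defines $\Phi$ functorially in the labels $V_1,\ldots,V_m$.

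Next, fix a convenient generating set of $\MCG(\Sigma_g,P_{\underline{V}})$ consisting of the Dehn twists along a standard symplectic basis of curves on $\Sigma_g$, an $S$-type transformation swapping the $\alpha$- and $\beta$-cycle of one handle, and the standard braidings permuting adjacent marked points. Proposition~\ref{P:equivalence} supplies explicit formulas for $\prepCpr$ on exactly such generators in terms of the ribbon twist of $\calC$, the universal morphisms of $\coend$, and the associated integral. The formulas of \cite{L94} express $\prepL$ on the same generators using the same algebraic ingredients. The intertwining property $\Phi \circ \prepCpr(f) = \lambda_f\, \prepL(f) \circ \Phi$ then reduces, for each generator $f$, to a graphical identity in $\calC$ which becomes an equality after transport along $\omega^{\otimes g}$.

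The hardest step will be the comparison on the $S$-generator. In the TQFT description its action is computed from a surgery presentation of the mapping cylinder and carries a non-trivial signature defect, whereas in \cite{L94} it is defined as a partial trace along a copy of $\coend$ against its two-sided integral. Matching the two requires identifying the Kirby-coloured component appearing in the evaluation of $\rmV'_\calC$ with (a multiple of) the integral of $\coend$, and then threading $\omega$ through the resulting diagram. The overall scalar discrepancy coming from the signature defect is harmless because the comparison is only required projectively. Once the $S$-generator is settled, the Dehn twists and braid generators reduce to straightforward checks, since on both sides they act via the ribbon twist, respectively the braiding, of $\calC$ applied to the relevant tensor factors.
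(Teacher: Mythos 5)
Your overall architecture matches the paper's: an explicit isomorphism between $\calC(V_1\otimes\ldots\otimes V_m\otimes\coend^{\otimes g},\one)$ and $\calC(V_1\otimes\ldots\otimes V_m,\coend^{\otimes g})$, followed by a generator-by-generator comparison of the formulas from Proposition~\ref{P:equivalence} with Lyubashenko's, where projectivity absorbs all scalars. However, the step you dispose of in one sentence --- ``the intertwining property then reduces, for each generator $f$, to a graphical identity in $\calC$ which becomes an equality after transport along $\omega^{\otimes g}$'' --- is precisely the mathematical content of the proof, and you neither identify these identities nor prove them. The paper isolates them as Lemma~\ref{L:Radford_prop}: the morphisms $\Omega$, $\Omega_{\rmL,X}/\Omega_{\rmR,Y}$, $\calS$ and $\calT$ can be slid from one tensor leg of the copairing to the other. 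Crucially, the paper builds the isomorphism not from the Hopf pairing $\omega$ but from the \emph{Radford copairing} $\Rad=\copL\circ\intL$ (Section~\ref{SS:copairing}), and with that choice each sliding identity becomes a visible isotopy of bichrome graphs. Your isomorphism, built from $\omega$ and dualization, differs from the paper's by (roughly) a power of $\zeta$ and an antipode of $\coend$ on each tensor factor, since the copairing inverse to $\omega$ is $\Rad$ only up to such a twist; you would therefore have to prove the analogous sliding identities for $\omega$, tracking whether the antipode factors commute with $\calS$, $\calT$ and, more delicately, with the (partial) monodromies. This is not automatic and is not addressed in your sketch, so as written there is a gap at the decisive step.

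A secondary point: your assessment that the $S$-generator is the hardest case because of surgery presentations and signature defects is misplaced at this stage. That work is already contained in Proposition~\ref{P:equivalence}, which you are entitled to quote; what remains for Theorem~\ref{T:main-equiv} is purely algebraic, namely the identity $(\calS\otimes\id_\coend)\circ\Rad=(\id_\coend\otimes\calS)\circ\Rad$ and its companions. In the paper the genuinely fiddly generator at this algebraic stage is $H_{i,j}$, whose comparison requires the partial-monodromy identity \eqref{E:Radford_prop_partial_monodromy} together with naturality of the twist to move $\theta$ across the copairing --- a step your ``straightforward checks'' for the remaining generators does not anticipate.
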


\begin{remark} ~
 \begin{enumerate}
    \item Theorem \ref{T:main-intro} establishes $\rmV'_\calC$ (or, equivalently, $\rmV_\calC$) as a TQFT extension of Lyubashenko's mapping class group representations. To our knowledge, this is the first such extension in the literature (with TQFTs understood as symmetric monoidal functors with respect to disjoint union). In the approach taken here, functoriality of $\rmV'_\calC$ guarantees that we obtain a projective representation, and therefore the action of the generators we compute in Proposition~\ref{P:equivalence} automatically satisfies all
    defining relations of the mapping class group. In this sense, Theorem \ref{T:main-intro} is an independent proof of the fact that the action of the generators given in \cite{L94} indeed defines a projective representation.
    \item The renormalized Lyubashenko invariants (see Sec.~\ref{sec:renorm-Lyu}) underlying our TQFT are profoundly different from the original invariants of~\cite{L94} when the modular category $\calC$ is non-semisimple. Indeed, for admissible closed 3-manifolds, the latter are identically zero, while the former are non-trivial. Furthermore, renormalized invariants are not defined for closed 3-manifolds with empty decorations, while the original ones are well-defined, but vanish against $\Sigma \times S^1$ for every surface $\Sigma$. This is why the original Lyubashenko invariants could not be consistently extended to a TQFT.
 \end{enumerate}
\end{remark}

Finally, we consider the small quantum group $\bar{U}_q \sl_2$ at an odd root of unity $q$ as an example. For these values of $q$, the Hopf algebra $\bar{U}_q \sl_2$ can be equipped with a factorizable ribbon structure (recalled in Section~\ref{SS:small-quantum}), so that
$\calC = \mods{\bar{U}_q \sl_2}$ is a non-semisimple modular category. Let $\Sigma$ be a connected surface, let $\lambda \subset H_1(\Sigma;\R)$
be a Lagrangian subspace, and consider the object $\bbSigma = (\Sigma,\varnothing,\lambda)$ of $\adCob_\calC$, i.e. the case with no marked points. It turns out that even in this simple situation, the action of Dehn twists $\tau_\gamma$ has infinite order for \textit{every} simple closed curve $\gamma$. To establish this result, it is more convenient to use the TQFT $\rmV_\calC$ rather than $\rmV'_\calC$.

\begin{proposition}\label{P:infinite_order_Dehn_twists_intro}
 If $\calC = \mods{\bar{U}_q \sl_2}$, $\bbSigma = (\Sigma,\varnothing,\lambda)$ is a connected object of $\adCob_\calC$, and $\gamma \subset \Sigma$ is an essential simple closed curve, then $\prepT(\tau_\gamma)$ has infinite order in $\PGL_\C(\rmV_\calC(\bbSigma))$.
\end{proposition}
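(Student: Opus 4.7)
The plan rests on two observations. First, any invertible endomorphism $A$ of a finite-dimensional complex vector space whose Jordan form contains a block of size $\geq 2$ has infinite order in $\PGL_\C$, because each positive power $A^n$ retains a Jordan block of the same size and therefore cannot be a scalar multiple of the identity. Second, in $\calC = \mods{\bar{U}_q \sl_2}$ the ribbon twist $\theta$ acts with a non-trivial Jordan block on at least one indecomposable projective $P \in \calC$, a well-documented logarithmic feature of the representation theory of the small quantum group at an odd root of unity. The strategy is therefore to exhibit, for each topological type of essential $\gamma$, a subspace of $\rmV_\calC(\bbSigma)$ on which $\rmV_\calC(\tau_\gamma)$ is conjugate to $\theta_P$ (or to $\theta_{P^{\otimes h}}$ in the separating case), so that the Jordan obstruction to diagonalisability transfers to the global operator.

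First I would invoke the change of coordinates principle to reduce to a single representative $\gamma_0$ per $\MCG(\Sigma)$-orbit of essential simple closed curves: non-separating curves form a single orbit, while separating curves form finitely many orbits indexed by the unordered partition of the genus by the complementary subsurfaces. Because conjugate mapping classes are represented by conjugate operators in $\PGL_\C(\rmV_\calC(\bbSigma))$, it suffices to handle one $\gamma_0$ per orbit. These representatives can be chosen to lie in standard position on the boundary of a Heegaard handlebody $\Sigma \cong \partial H_g$ --- along the core of a single handle in the non-separating case, and as the boundary of a standard genus-$h$ sub-handlebody of $H_g$ in the separating case.

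The Dehn twist $\tau_{\gamma_0}$, realised as the cylindrical cobordism $\bbSigma \times [0,1]$, can be rewritten via Kirby calculus as the same cylinder equipped with an admissible red surgery circle of framing $\pm 1$ embedded along $\gamma_0 \times \{1/2\}$. Under the Lyubashenko--Reshetikhin--Turaev functor of Section~\ref{sss:LRT-functor}, this red component evaluates to an insertion of the Kirby-colour cable on $\gamma_0$, i.e.\ to an integral over the coend $\coend \in \calC$. I would then take $\xi \in \rmV_\calC(\bbSigma)$ to be the vector produced by the admissible handlebody in which a blue edge labelled by $P$ is threaded along the core of the handle enclosed by $\gamma_0$ (respectively, along the cores of the $h$ handles inside the sub-handlebody bounded by $\gamma_0$, in the separating case). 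A standard skein slide moves the Kirby cable onto this blue edge, collapsing the coend integral to the single coupon $\theta_P$ (respectively $\theta_{P^{\otimes h}}$); hence $\rmV_\calC(\tau_{\gamma_0})(\xi)$ coincides up to a non-zero scalar with the vector obtained by inserting this ribbon coupon on the chosen blue edge. Iterating and pairing against a suitably chosen vector via the non-degenerate pairing $\langle \_, \_ \rangle_{\bbSigma}$ witnesses the off-diagonal entry of the Jordan block and shows that $\rmV_\calC(\tau_{\gamma_0})^n(\xi)$ is not a scalar multiple of $\xi$ for any $n \geq 1$.

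The main obstacle I anticipate is making the skein slide and the subsequent pairing rigorous: one must ensure that the Jordan block of $\theta_P$ is not killed when transferred from the local ribbon action on $P$ to the global action on $\rmV_\calC(\bbSigma)$. This is precisely where the skein-algebraic description of $\rmV_\calC$ recalled in Section~\ref{sss:skein-alg-summary} and the Hopf-algebraic incarnation developed in Section~\ref{sec:Hopf} become indispensable, as they express the relevant pairings in terms of modified traces on $P$ and so make the nilpotent part of $\theta_P$ visible at the global level.
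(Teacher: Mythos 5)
Your treatment of non-separating curves is essentially the paper's argument: pick a curve $\beta$ meeting $\gamma$ once, thread a blue knot labelled $P_{\one}$ along it inside a handlebody in which $\gamma$ bounds a disc, trade the Dehn twist for a $\mp 1$-framed red unknot along $\gamma$, slide it onto the single $P_{\one}$-strand so that it becomes the twist $\theta_{P_{\one}} \propto \id + (r-1)(q-q^{-1})h$ with $h$ nilpotent, and detect the off-diagonal term by pairing with the empty complementary handlebody, which yields $\rmt_{P_{\one}}(h) \neq 0$. Your preliminary reduction to one representative per mapping class group orbit is harmless but unnecessary, since the construction works verbatim for any non-separating $\gamma$.

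The gap is in the separating case. If $\gamma$ bounds a disc $D$ in the handlebody $M$ splitting it into sub-handlebodies of genera $h$ and $g-h$, then a blue graph threaded along the cores of the $h$ handles of the sub-handlebody bounded by $\gamma$ is disjoint from $D$, hence unlinked from the red surgery circle: the Kirby cable does not collapse to $\theta_{P^{\otimes h}}$ but to the scalar $\Delta_\mp$, so $\rmV_\calC(\bbSigma \times \bbI_{\tau_\gamma})$ acts on your vector $\xi$ by a scalar and no Jordan block is produced. Moreover, any closed graph meets a separating disc an even number of times with zero algebraic intersection, so the most the sliding argument can produce is the twist of $V \otimes V^*$ for the label $V$ crossing $D$; the genuine difficulty is then to show that the nilpotent part of that twist survives in the quotient state space. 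The paper resolves this by taking the strand crossing $D$ to be coloured by the end $\eend$ (the adjoint representation), using Ostrik's result that $P_{\one}$ is a direct summand of $\eend$ to insert coupons $f_1 = \iota : P_{\one} \to \eend$ and $f_2$ built from the Drinfeld map and the Hopf pairing on the two sides of $D$, and then evaluating the pairing against an explicit dual bichrome graph via $F_\intL$ so that everything again reduces to $\rmt_{P_{\one}}(h) \neq 0$. Without an ingredient of this kind your argument does not establish infinite order for separating curves.
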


The proof is given in Section~\ref{SS:infinite-order}.

\subsection*{Acknowledgements}

MD was supported by KAKENHI Grant-in-Aid for JSPS Fellows 19F19765.
AMG is supported by CNRS, and partially by ANR grant JCJC ANR-18-CE40-0001  and the RSF Grant No.\ 20-61-46005. AMG is also grateful to Utah State University for its kind hospitality during February-April 2020.
NG was partially supported by the NSF grant DMS-1452093. 
IR is partially supported by the Cluster of Excellence EXC 2121.

\subsection*{Conventions}

Throughout this paper, $\Bbbk$ is an algebraically closed field, possibly of finite characteristic, and $\calC$ stands for a strict modular category over $\Bbbk$ (after the notion has been recalled in Section~\ref{subS:alg-ing}). Also, every manifold considered is oriented, and every diffeomorphism orientation-preserving.

\section{TQFTs from non-semisimple modular categories}\label{S:TQFT_manual}

In this section we review the construction of a 3-dimensional TQFT from a possibly non-semisimple modular category, following \cite{DGGPR19}. We start by recalling the necessary algebraic ingredients, then we describe how to use a modular category to define an invariant of closed 3-manifolds decorated with bichrome graphs, and next we explain how this invariant gives rise to a TQFT via the universal construction. Finally, we explain how the construction is applied when the modular category is of the form $\mods{H}$ for a finite-dimensional factorizable ribbon Hopf algebra $H$. This will be used in Appendix~\ref{A:equivalence} to show the equivalence with the construction of \cite{DGP17}.

\subsection{Algebraic ingredients}\label{subS:alg-ing}

\subsubsection{Modular tensor categories}\label{sss:MTC}

A \textit{modular tensor category}, or \textit{modular category} for short, is a finite ribbon category over $\Bbbk$ whose transparent objects are all isomorphic to direct sums of copies of the tensor unit. 

Let us unpack this definition a little bit.
A \textit{finite category} is a linear category $\calC$ over $\Bbbk$ which is equivalent to the category $\mods{A}$ of finite dimensional left $A$-modules for some finite dimensional algebra $A$ over $\Bbbk$. A \textit{ribbon structure} on a linear category $\calC$ is given by:
\begin{enumerate}
 \item a tensor product $\otimes : \calC \times \calC \to \calC$, together with associativity isomorphisms;
 \item a tensor unit $\one \in \calC$, together with unit isomorphisms;
 \item duality morphisms $\lev_X : X^* \otimes X \to \one$, $\lcoev_X : \one \to X \otimes X^*$, and $\rev_X : X \otimes X^* \to \one$, $\rcoev_X : \one \to X^* \otimes X$ for every $X \in \calC$;
 \item braiding isomorphisms $c_{X,Y} : X \otimes Y \to Y \otimes X$ for all $X,Y \in \calC$;
 \item twist isomorphisms $\theta_X : X \to X$ for every $X \in \calC$.
\end{enumerate}
All these pieces of structure are subject to a number of conditions that can be found in \cite{EGNO15}. For notational simplicity we will assume $\calC$ to be strict, that is, associativity and unit isomorphisms are identities.

An object $X \in \calC$ of a ribbon category is \textit{transparent} if its double braiding with any other object of $\calC$ is trivial, meaning $c_{Y,X} \circ c_{X,Y} = \id_{X \otimes Y}$ for every $Y \in \calC$.

\subsubsection{The coend $\coend$}\label{sec:def_coend}

A modular category $\calC$ admits a \textit{coend}
\[
 \coend := \int^{X \in \calC} X^* \otimes X \in \calC,
\]
which is defined as the universal dinatural transformation with source
\begin{align*}
  (\_^* \otimes \_) : \calC^\op \times \calC & \to \calC \\*
  (X,Y) & \mapsto X^* \otimes Y,
\end{align*}
see \cite{M71} for details. In particular, $\coend$ is an object of $\calC$ equipped with structure morphisms $i_X : X^* \otimes X \to \coend$ for every $X \in \calC$ which are dinatural in $X$. The coend $\coend$ carries the structure of a braided Hopf algebra in $\calC$
\cite{M93, L95}, meaning it is equipped with:
\begin{enumerate}
 \item a product $\mu : \coend \otimes \coend \to \coend$ and a unit $\eta : \one \to \coend$;
 \item a coproduct $\Delta : \coend \to \coend \otimes \coend$ and a counit $\epsilon : \coend \to \one$;
 \item an antipode $S : \coend \to \coend$.
\end{enumerate}
Moreover, the coend $\coend$ admits non-zero two-sided integrals and cointegrals, which are unique up to scalar, see e.g.~\cite[Sec.~4.2.3]{KL01}. 
By definition\footnote{In general, the source of an integral and the target of a cointegral are given by an invertible object other than the tensor unit, the so-called object of integrals. For modular categories, the object of integrals is the tensor unit. See e.g.~\cite[Sec.~2]{DGGPR19} for more details and references.
}, a \textit{two-sided integral}, or \textit{integral} for short,
is a morphism $\intL : \one \to \coend$ satisfying
\[
 \mu \circ (\intL \otimes \id_\coend) = \intL \circ \epsilon =
 \mu \circ (\id_\coend \otimes \intL),
\]
and a \textit{two-sided cointegral}, or \textit{cointegral} for short,
is a morphism $\cointL : \coend \to \one$ satisfying
\[
 (\cointL \otimes \id_\coend) \circ \Delta = \eta \circ \cointL =
 (\id_\coend \otimes \cointL) \circ \Delta.
\]
Every modular category also admits an \textit{end}
\[
 \eend := \int_{X \in \calC} X \otimes X^* \in \calC,
\]
which comes equipped with a dinatural family of morphisms $j_X : \eend \to X \otimes X^*$. The universal properties of $\coend$ and $\eend$ imply that there is a unique morphism $D : \coend \to \eend$, called the \textit{Drinfeld map}, such that for all $X,Y \in \calC$
\[
    j_Y \circ D \circ i_X = \pic{Drinfeld_map}
    : \; X^* \otimes X \to Y \otimes Y^*.
\]
Here we use standard diagrammatic calculus, with diagrams read from bottom to top, see \cite{DGGPR19} for further conventions.
As a consequence of modularity, $D$ is invertible.
In fact, there are three a priori independent non-degeneracy conditions on the braiding of a finite ribbon category. First, that there are no non-trivial transparent objects, as used above in the definition of modularity; Second, that the Drinfeld map is invertible; Third, that a certain functor to the Drinfeld center of $\calC$ is an equivalence. These conditions were shown to be equivalent in \cite{S16}, and hence all three can be used to define modularity. Invertibility of $D$ was the condition originally used in \cite{L94}.\footnote{
    Actually, in \cite{L94} the non-degeneracy of a certain Hopf pairing on $\coend$ is used as the defining condition, but this is easily seen to be equivalent to the invertibility of $D$ \cite[Prop.~4.11]{FGR17}.} 
However, the condition on transparent objects is the easiest one to state.

\subsubsection{Modified traces}\label{sss:mod-tr-summary}

Denote by $\Proj(\calC)$ the full subcategory of projective objects of $\calC$. Then $\Proj(\calC)$ is a tensor ideal in $\calC$, that is, it is absorbent with respect to tensor products with arbitrary objects of $\calC$, and it is closed with respect to retracts. In fact, $\Proj(\calC)$ is the smallest non-zero tensor ideal of $\calC$ \cite[Sec.~4.4]{GKP10}.

A \textit{modified trace} on $\Proj(\calC)$ is a family of linear maps
\[
 \rmt := \{ \rmt_X : \End_\calC(X) \to \Bbbk \mid X \in \Proj(\calC) \}
\]
satisfying:
\begin{enumerate}
 \item \textit{Cyclicity}: $\rmt_X(g \circ f) = \rmt_Y(f \circ g)$ for all objects $X,Y \in \Proj(\calC)$, and all morphisms $f \in \calC(X,Y)$ and $g \in \calC(Y,X)$;
 \item \textit{Partial trace}: $\rmt_{X \otimes Y}(f) = \rmt_X(\ptr(f))$ for all objects $X \in \Proj(\calC)$ and $Y \in \calC$, and every morphism $f \in \End_\calC(X \otimes Y)$, where the morphism $\ptr(f) \in \End_\calC(X)$ is defined as
  \[
   \ptr(f) := (\id_X \otimes \rev_Y) \circ (f \otimes \id_{Y^*}) \circ (\id_X \otimes \lcoev_Y).
  \]
\end{enumerate}

One can show that, since $\calC$ is modular, there exists a non-zero modified trace on $\Proj(\calC)$, and that this trace is unique up to an overall scalar \cite{GKP11,GR17}, see also \cite[Cor.~5.6]{GKP18} for a much more general existence result. Moreover, this modified trace induces a non-degenerate pairing $\calC(X,Y) \times \calC(Y,X) \to \Bbbk$ given by $(f,g) \mapsto t_Y(f \circ g)$ for all $X \in \calC$ and $Y \in \Proj(\calC)$ \cite{GR17,GKP18}.
By contrast, the standard categorical trace vanishes on $\Proj(\calC)$ as soon as $\calC$ is non-semisimple.

Since $\calC$ is finite, it has enough projectives and injectives. In particular, the tensor unit $\one$ has a projective cover $P_{\one} \in \Proj(\calC)$, equipped with a surjection morphism $\epsilon_{\one} : P_{\one} \to \one$. Since $\calC$ is modular, and hence also unimodular, $P_{\one}$ is in addition the injective envelope of $\one$, see e.g.~\cite[Sec.~2]{DGGPR19} for more details. In particular, there is an injection morphism $\eta_{\one} : \one \to P_{\one}$. Note that $\epsilon_{\one} \circ \eta_{\one} \neq 0$ if and only if $\calC$ is semisimple. Both $\epsilon_{\one}$ and $\eta_{\one}$ are unique up to scalars, and we link their normalization to that of the modified trace via the condition
\[
 \rmt_{P_{\one}}(\eta_{\one} \circ \epsilon_{\one}) = 1.
\]
The fact that this expression is non-zero to begin with follows from the non-degeneracy of the modified trace.

\subsection{3-Manifold invariants}

\subsubsection{Lyubashenko-Reshetikhin-Turaev functor}\label{sss:LRT-functor}

Fix a non-zero integral $\intL$ for $\coend$.
Here we recall an extension $F_\intL$ of the Reshetikhin-Turaev functor $F_\calC$ of \cite{T94}. While the source of $F_\calC$ is the category $\calR_\calC$ of $\calC$-colored ribbon graphs, $F_\intL$ is defined over the category $\calR_\intL$ of so-called bichrome graphs \cite{DGP17,DGGPR19}.

Bichrome graphs are a generalization of standard ribbon graphs whose edges can be of two kinds, \textit{red} (carrying no labels), and \textit{blue} (labeled as usual by objects of $\calC$). For what concerns coupons, they can be, according to the edges intersecting them, either \textit{bichrome} (and unlabeled), or \textit{blue} (and labeled as usual by morphisms of $\calC$), while red coupons are forbidden. Furthermore, bichrome coupons only have two possible configurations, namely (color available online)
\[
 \includegraphics{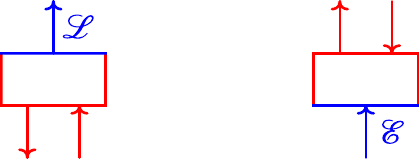}
\]
with blue edges labeled by the coend $\coend$ and the end $\eend$, respectively (see Section~\ref{sec:def_coend}). 
    Blue coupons, on the other hand,
can be arbitrary. Boundary vertices can only meet blue edges. 

To evaluate the functor $F_\intL : \calR_\intL \to \calC$ on morphisms, i.e.\ on bichrome graphs, one carries out several steps, which we illustrate in the following example:
\[
    T = \pic{LRT_example_1}
\]
Remark that throwing away the blue part of the graph, as well as bichrome coupons, and joining together each pair of red strands which did intersect a bichrome coupon, results in a red link.
Then, start by cutting a red edge of the bichrome graph for each component of the red link, and drag the ends to the bottom, placing them side by side, with the downward-oriented strand on the left, and the upward-oriented one on the right. For $n$ red components, this results in a so-called $n$-bottom graph, with $2n$ red boundary vertices at the bottom. In our example, $n=1$ and 
the result of the above operation is
\[
    \tilde T = \pic{LRT_example_2}
\]
The plat closure operation, which consists in joining together in pairs red strands intersecting adjacent incoming boundary vertices, has to result in the original bichrome graph.
Next, choose objects $X_1, \dots, X_n$ (one for each of the $n$ components of the red link), label red edges correspondingly by $X_k$, for $1\leq k \leq n$, and label bichrome coupons meeting these edges by the dinatural morphism $i_{X_k}$, in the case of $\coend$, or $j_{X_k}$, in the case of $\eend$. In our example one obtains the graph
\[
    \tilde T_{X_1} = \pic{LRT_example_4}
\]
At this point, all edges and coupons are labeled, and forgetting the difference between red and blue we have a $\calC$-colored ribbon graph, that is, a morphism in $\calR_\calC$. To this we can apply the Reshetikhin-Turaev functor $F_\calC$ to obtain a morphism in~$\calC$. In the above example, 
\[
    F_\calC( \tilde T_{X_1} ) = \big( i_{X_1} \otimes i_{X_1} \big) \circ \big( \id_{X_1^*} \otimes \lcoev_{X_1} \otimes \id_{X_1} \big).
\]
By construction, the resulting family of morphisms (indexed by $X_1,\dots,X_n$) is dinatural, and via the universal property of $\coend$ defines a morphism out of $\coend^{\otimes n}$, possibly tensored with other objects coming from blue boundary vertices. For our bichrome graph $T$ we obtain a morphism $f_\calC(T) : \coend \to \coend \otimes \coend$, which is uniquely specified by the condition\footnote{We compute these  morphisms explicitly for general  $n$-bottom graphs in Lemma~\ref{L:comparison_H-L} in the case $\calC = \mods{H}$ for a finite-dimensional ribbon Hopf algebra $H$.}
\[
    f_\calC(T) \circ i_{X_1} = F_\calC( \tilde T_{X_1} ).
\]
This is the defining property of the coproduct of $\coend$, so that in fact $f_\calC(T) = \Delta$. The final step is to pre-compose with the $n$-fold tensor power of the integral $\intL$, so that in the example,  $F_\intL( T ) : \one \to \coend \otimes \coend$ is given by
\[
    F_\intL( T ) = \Delta \circ \intL.
\]
Note that while $F_\intL$ does depend on the choice of $\intL$, the category $\calR_\intL$ does not. The notation is chosen merely to remind of the relevant functor.
We refer to \cite[Sec.~3.1]{DGGPR19} for more details on bichrome graphs, the category $\calR_\intL$, and the definition (and well-definedness) of $F_\intL$. 

By construction, $\calR_\calC$ and $\calR_\intL$ have the same objects, and there is a commuting (on the nose) diagram of functors
\begin{center}
  \begin{tikzpicture}[descr/.style={fill=white}]
  \node (P1) at (165:2.25) {$\calR_\calC$};
  \node (P2) at (195:2.25) {$\calR_\intL$};
  \node (P3) at (0:0) {$\calC \hspace*{10pt}$};
  \draw
  (P1) edge[right hook->] (P2)
  (P1) edge[->] node[above] {\scriptsize $F_\calC$} (P3)
  (P2) edge[->] node[below] {\scriptsize $F_\intL$} (P3);
 \end{tikzpicture}
\end{center}
where the vertical arrow is the identity at the level of objects and the inclusion of purely blue ribbon graphs into bichrome graphs at the level of morphisms.

\subsubsection{Renormalized Lyubashenko invariant}\label{sec:renorm-Lyu}

We say a bichrome graph is \textit{closed} if it has no boundary vertices, and we say it is \textit{admissible} if it has at least one blue edge labeled by a projective object of $\calC$.
Every admissible closed bichrome graph $T$ admits a cutting presentation, which is a bichrome graph $T_V$ featuring a single incoming boundary vertex and a single outgoing one, both positive and labeled by $V \in \Proj(\calC)$, and whose trace closure is $T$. If $T$ is a closed admissible bichrome graph and $T_V$ is a cutting presentation, then the scalar
\[
 F'_\intL(T) := \rmt_V(F_\intL(T_V))
\]
is independent of $T_V$ and is a topological invariant of $T$ \cite[Thm.~3.3]{DGGPR19}.

By suitably normalizing it, $F'_\intL$ can be used to define an invariant $\rmL'_\calC$ of admissible decorated 3-manifolds. A \textit{decorated 3-manifold} is a pair $(M,T)$, where $M$ is a connected closed 3-manifold, and where $T \subset M$ is a closed bichrome graph. A pair $(M,T)$ is \textit{admissible} if $T$ is.

To define the invariant, we need \textit{stabilization coefficients}, which are defined as
\[
 \Delta_\pm := F_\intL(O_\pm) \in \Bbbk,
\]
where $O_\pm$ denotes a red $\pm 1$-framed unknot.
These coefficients are automatically non-zero \cite{KL01} (see \cite[Prop.~2.6]{DGGPR19} for a proof in the present notation), and thus allow us to fix constants $\calD$ and $\delta$ given by
\begin{equation}\label{eq:delta-def}
 \calD^2 = \Delta_+ \Delta_-, \qquad 
 \delta = \frac{\calD}{\Delta_-} = \frac{\Delta_+}{\calD}.
\end{equation}
Note that $\calD$ involves a choice of square root.

Let $(M,T)$ be an admissible decorated 3-manifold, and let $L$ be a surgery presentation of $M$ given by a red framed oriented link in $S^3$ with $\ell$ components and signature $\sigma$. We assume the bichrome graph $T$ is contained in the exterior of the surgery link $L$, so that we can think of them as simultaneously embedded in $S^3$.
Then, as proved in \cite[Thm.~3.8]{DGGPR19}, the scalar
\begin{equation}\label{eq:L'-via-F'}
 \rmL'_\calC(M,T) := \calD^{-1-\ell} \delta^{-\sigma} F'_\intL(L \cup T)
\end{equation}
is a topological invariant of the pair $(M,T)$.
We will call $\rmL'_\calC(M,T)$ the \textit{renormalized Lyubashenko invariant} of the admissible decorated 3-manifold $(M,T)$. The use of modified traces to define renormalized 3-manifold invariants via link surgery was pioneered in \cite{CGP12}.

\newpage

\subsection{Construction of TQFTs}\label{SS:TQFT_data}

\subsubsection{Admissible cobordisms}\label{sss:admissible_cobord}

The TQFT associated to $\calC$ will be defined on a cobordism category that contains the same objects but fewer morphisms than the one used for the original Reshetikhin-Turaev TQFT \cite{T94}.

The \textit{category $\adCob_\calC$ of admissible cobordisms} has objects and morphisms defined as follows.
An object $\bbSigma$ of $\adCob_{\calC}$ is a triple $(\Sigma,P,\lambda)$ where: 
\begin{enumerate}
 \item $\Sigma$ is a closed surface;
 \item $P \subset \Sigma$ is a finite set of oriented framed points labeled by objects of $\calC$;
 \item $\lambda \subset H_1(\Sigma;\R)$ is a Lagrangian subspace for the intersection form.
\end{enumerate}
A morphism $\bbM : \bbSigma \rightarrow \bbSigma'$ of $\adCob_{\calC}$ is an equivalence class of admissible triples $(M,T,n)$ where:
\begin{enumerate}
 \item $M$ is a 3-dimensional cobordism from $\Sigma$ to $\Sigma'$;
 \item $T \subset M$ is a bichrome graph from $P$ to $P'$;
 \item $n \in \Z$ is an integer called the \textit{signature defect}.
\end{enumerate}
A triple $(M,T,n)$ is \textit{admissible} if every connected component of $M$ disjoint from the incoming boundary $\partial_- M \cong \Sigma$ contains an admissible subgraph of $T$.
Two triples $(M,T,n)$ and $(M',T',n')$ are equivalent if $n = n'$ and if there exists an isomorphism of cobordisms $f : M \rightarrow M'$ satisfying $f(T) = T'$.  

The composition $\bbM' \circ \bbM : \bbSigma \rightarrow \bbSigma''$ of morphisms $\bbM' : \bbSigma' \rightarrow \bbSigma''$, $\bbM : \bbSigma \rightarrow \bbSigma'$  is the equivalence class of the triple
\begin{equation}\label{eq:compose-Cob}
 \Big( M \cup_{\Sigma'} M', T \cup_{P'} T', n + n' - \mu \big( M_*(\lambda),\lambda',M'^* (\lambda'') \big) \Big)
\end{equation}
for the Lagrangian subspaces 
\begin{gather*}
 M_*(\lambda) := \{ x' \in H_1(\Sigma';\R) \mid (i_{M_+})_*(x') \in (i_{M_-})_* (\lambda) \}, \\*
 M'^*(\lambda'') := \{ x' \in H_1(\Sigma';\R) \mid (i_{M'_-})_*(x') \in (i_{M'_+})_*(\lambda'') \}.
\end{gather*}
Here,
\[
 i_{M_-} : \Sigma \hookrightarrow M, \quad 
 i_{M_+} : \Sigma' \hookrightarrow M, \quad
 i_{M'_-} : \Sigma' \hookrightarrow M', \quad 
 i_{M'_+} : \Sigma'' \hookrightarrow M'
\]
are the embeddings induced by the structure maps of $M$ and $M'$, and $\mu$ denotes the Maslov index, see \cite{T94} for a detailed account of its properties.

One notable difference between $\adCob_\calC$ and the full cobordism category is that in $\adCob_\calC$ not every object is dualizable. Indeed, the dualizable objects $\bbSigma=(\Sigma,P,\lambda)$ in $\adCob_\calC$ are precisely those where each connected component of $\Sigma$ contains at least one vertex of $P$ labeled by a projective object of $\calC$. If this condition is not satisfied, the decorated cobordism giving the coevaluation\footnote{That is, the cylinder over $\bbSigma$ with both boundary components declared as outgoing.} in the full cobordism category is not admissible, and so it is not in $\adCob_\calC$.

\subsubsection{TQFT via the universal construction}\label{sss:TQFT-from-universal}

We can use the universal construction of \cite{BHMV95} to extend an invariant of closed manifolds to a functor defined on cobordisms.
To start with, we generalize $\rmL'_\calC$ to an invariant of closed morphisms of $\adCob_\calC$ by setting
\begin{equation}
\label{eq:L'-on-cobordisms}
 \rmL'_\calC(\bbM) := \delta^n \rmL'_\calC(M,T)
\end{equation}
for every closed connected morphism $\bbM = (M,T,n)$, and by setting
\[
 \rmL'_\calC(\bbM_1 \disjun \ldots \disjun \bbM_k) := \prod_{i=1}^k \rmL'_\calC(\bbM_i)
\]
for every tensor product of closed connected morphisms $\bbM_1, \ldots, \bbM_k$. Then, if $\bbSigma$ is an object of $\adCob_\calC$, we consider vector spaces $\calV(\bbSigma)$ and $\calV'(\bbSigma)$ with basis given by morphisms of $\adCob_\calC$ of the form $\bbM_{\bbSigma} : \varnothing \to \bbSigma$ and $\bbM'_{\bbSigma} : \bbSigma \to \varnothing$, respectively. These vector spaces are paired via
\begin{align*}
 \langle \_,\_ \rangle_{\bbSigma} : \calV'(\bbSigma) \times \calV(\bbSigma) & \to \Bbbk \\*
 (\bbM'_{\bbSigma},\bbM_{\bbSigma}) & \mapsto \rmL'_\calC(\bbM'_{\bbSigma} \circ \bbM_{\bbSigma}).
\end{align*}
State spaces are then defined as quotients with respect to the left and the right radical of the pairing $\langle \_,\_ \rangle_{\bbSigma}$, namely
\[
 \rmV_\calC(\bbSigma) := \calV(\bbSigma) / \rrad \langle \_,\_ \rangle_{\bbSigma}, \qquad
 \rmV'_\calC(\bbSigma) := \calV'(\bbSigma) / \lrad \langle \_,\_ \rangle_{\bbSigma}.
\]
    We will denote equivalence classes in these quotients by square brackets.
Now, if $\bbM : \bbSigma \to \bbSigma'$ is a morphism of $\adCob_\calC$, its associated operators are simply defined as
\begin{align*}
 \rmV_\calC(\bbM) : \rmV_\calC(\bbSigma) & \to \rmV_\calC(\bbSigma') 
 &
 \rmV'_\calC(\bbM) : \rmV'_\calC(\bbSigma') & \to \rmV'_\calC(\bbSigma) \\*
 {}[\bbM_{\bbSigma}] & \mapsto [\bbM \circ \bbM_{\bbSigma}], &
 {}[\bbM'_{\bbSigma}] & \mapsto [\bbM'_{\bbSigma} \circ \bbM].
\end{align*}
Altogether, this defines a pair of functors
\[
 \rmV_\calC : \adCob_\calC \to \Vect_\Bbbk, \quad 
 \rmV'_\calC : (\adCob_\calC)^{\op} \to \Vect_\Bbbk.
\]
These functors are dual to each other in the sense that the vector spaces
$\rmV_\calC(\bbSigma)$ and $\rmV'_\calC(\bbSigma)$ are non-degenerately paired 
via $\langle \_,\_ \rangle_{\bbSigma}$, and that for all $\bbM : \bbSigma \to \bbSigma'$, $[\bbM_{\bbSigma}] \in \rmV_\calC(\bbSigma)$, and $[\bbM'_{\bbSigma'}] \in \rmV'_\calC(\bbSigma')$ we have
\begin{equation}\label{eq:V-V'-dual}
\big\langle \bbM'_{\bbSigma'}, \rmV_\calC(\bbM)(\bbM_{\bbSigma}) 
\big\rangle_{\bbSigma'} =
\big\langle \rmV'_\calC(\bbM)(\bbM'_{\bbSigma'}), \bbM_{\bbSigma} \big\rangle_{\bbSigma}
~.
\end{equation}
It is shown in \cite[Thm.~4.12]{DGGPR19} that $\rmV_\calC$ and $\rmV'_\calC$ are symmetric monoidal, so that they define TQFTs on $\adCob_\calC$ and $(\adCob_\calC)^{\op}$, respectively.

\subsubsection{Skein modules and algebraic state spaces}\label{sss:skein-alg-summary}

While the definition of the state spaces $\rmV_\calC(\bbSigma)$ and $\rmV'_\calC(\bbSigma)$ in terms of equivalence classes of linear combinations of decorated cobordisms is very convenient for the construction of the TQFT functors, it is not at all obvious how to express these spaces in terms of the underlying modular category $\calC$. We will do this in two steps, first introducing admissible skein modules, and then passing to (quotients of) certain morphism spaces in $\calC$.

If $\bbSigma = (\Sigma,P,\lambda)$ is an object of $\adCob_\calC$ and $M$ is a connected 3-dimensional cobordism from $\varnothing$ to $\Sigma$, then we denote with $\calV(M;\bbSigma)$ the vector space generated by isotopy classes of admissible bichrome graphs $T$ inside $M$ from $\varnothing$ to $P$. Similarly, if $\bbSigma$ is non-empty and $M'$ is a connected 3-dimensional cobordism from $\Sigma$ to $\varnothing$, then we denote with $\calV'(M';\bbSigma)$ the vector space generated by isotopy classes of
(not necessarily admissible)
bichrome graphs $T'$ inside $M'$ from $P$ to $\varnothing$. 

We say two vectors of $\calV'(M;\bbSigma)$ are \textit{skein equivalent} if, up to isotopy, they are related by a finite sequence of local moves replacing
parts of bichrome graphs obtained by restriction to topological 3-balls with bichrome graphs having the same image under the functor $F_\intL$. 

On the other hand, for $\calV(M;\bbSigma)$ we introduce an extra condition for skein equivalence. Namely, we say a local move is \textit{admissible} if the complement of the topological 3-ball inside which it takes place contains an admissible subgraph. We say two vectors of $\calV(M;\bbSigma)$ are \textit{admissibly skein equivalent}, or \textit{skein equivalent} for short, if they are related by a finite sequence of admissible local moves.

We define the \textit{skein modules} $\adSk(M;\bbSigma)$ and $\Sk'(M';\bbSigma)$ as the quotients of $\calV(M;\bbSigma)$ and $\calV'(M';\bbSigma)$ with respect to skein equivalence, respectively.
Then the natural linear maps
\begin{align*}
 \pi : \adSk(M;\bbSigma) & \to \rmV_\calC(\bbSigma) 
 &
 \pi' : \Sk'(M';\bbSigma) & \to \rmV'_\calC(\bbSigma) \\*
 {}[T] & \mapsto [M,T,0]
 &
 {}[T'] & \mapsto [M',T',0]
\end{align*}
are surjective \cite[Prop.~4.11]{DGGPR19}.

Next, let us recall the algebraic model for state spaces associated with connected objects of $\adCob_\calC$. For all $g,m \in \N$ and every $\underline{V} = (V_1,\ldots,V_m) \in \calC^{\times m}$, let us consider vector spaces\footnote{In \cite{DGGPR19} we used $\calC(P_{\one},\eend^{\otimes g} \otimes V_1 \otimes \ldots \otimes V_m)$ and $\calC(\coend^{\otimes g} \otimes V_1 \otimes \ldots \otimes V_m ,\one)$ instead, i.e. we placed (co)ends as first tensor factors, rather than last. The morphism spaces we use here are isomorphic (via the braiding) and turn out to be more convenient when relating to Lyubashenko's mapping class group representations.
}
\[
 \calX_{g,\underline{V}} := \calC(P_{\one},V_1 \otimes \ldots \otimes V_m \otimes \eend^{\otimes g}), \quad
 \calX'_{g,\underline{V}} := \calC(V_1 \otimes \ldots \otimes V_m \otimes \coend^{\otimes g},\one),
\]
where $P_{\one}$ is the projective cover of the tensor unit $\one$. Using the injection morphism $\eta_{\one}: \one \to P_{\one}$ from Section~\ref{sss:mod-tr-summary} and the inverse of the Drinfeld map $D$, these vector spaces are paired via
\begin{align*}
 \langle \_,\_ \rangle_{g,\underline{V}} : \calX'_{g,\underline{V}} \times \calX_{g,\underline{V}} 
 & \to \Bbbk \\*
 (x',x) 
 & \mapsto t_{P_{\one}}(\eta_{\one} \circ x' \circ (\id_{V_1 \otimes \ldots \otimes V_m} \otimes (D^{-1})^{\otimes g}) \circ x ) \\*
 & \phantom{\mapsto} \hspace*{\parindent} = \langle x' \circ \left( (\id_{V_1 \otimes \ldots \otimes V_m} \otimes (D^{-1})^{\otimes g} \right) \circ x \rangle_{\epsilon_{\one}},
\end{align*}
where for every $f \in \Hom_H(P_{\one},\one)$ the scalar $\langle f \rangle_{\epsilon_{\one}} \in \Bbbk$ is defined by $f = \langle f \rangle_{\epsilon_{\one}} \epsilon_{\one}$ for the projective cover morphism $\epsilon_{\one} : P_{\one} \to \one$.
As proved in \cite[Lem.~4.1]{DGGPR19}, the left radical $\lrad \langle \_,\_ \rangle_{g,\underline{V}}$ of the pairing $\langle \_,\_ \rangle_{g,\underline{V}}$ is trivial.
We define the \textit{algebraic state spaces} as
\begin{equation}\label{eq:alg-statespace}
 \rmX_{g,\underline{V}} := \calX_{g,\underline{V}} / \rrad \langle \_,\_ \rangle_{g,\underline{V}}, \quad
 \rmX'_{g,\underline{V}} := \calX'_{g,\underline{V}}.
\end{equation}
They are dual to each other by construction.

In the following we will use skein modules to give an isomorphism between state spaces coming from the functors $\rmV_\calC$ and $\rmV'_\calC$ and their algebraic models.
For every natural number $g$, we consider a standard closed connected surface $\Sigma_g$ of genus $g$ equipped with a fixed Lagrangian subspace $\lambda_g \subset H_1(\Sigma_g;\R)$. For every natural number $m$ and every $m$-tuple $\underline{V} = (V_1,\ldots,V_m) \in \calC^{\times m}$, we denote with 
\begin{equation}\label{eq:standard-surf_gV}
\bbSigma_{g,\underline{V}} = (\Sigma_g,P_{\underline{V}},\lambda_g)
\end{equation}
the object of $\adCob_\calC$ whose decorations are specified by the subscript. We want to describe isomorphisms
\[
 \rmV_\calC(\bbSigma_{g,\underline{V}}) \cong \rmX_{g,\underline{V}},
 \quad
 \rmV'_\calC(\bbSigma_{g,\underline{V}}) \cong \rmX'_{g,\underline{V}}.
\]
We start by fixing a genus $g$ Heegaard splitting $M_g \cup_{\Sigma_g} M'_g$ of $S^3$, which we represent as in Figure~\ref{F:algebraic_model}. 
With respect to this configuration, we define the map
\[
 \Psi : \rmX_{g,\underline{V}} \to \adSk(M_g;\bbSigma_{g,\underline{V}}),
\]
which sends $[x] \in \rmX_{g,\underline{V}}$ to the admissible bichrome graph contained in the bottom handlebody in Figure~\ref{F:algebraic_model}. One can check that this assignment is independent of the choice of a representative $x \in \calX_{g,\underline{V}}$ of the class $[x]$. Denote by $\Phi$ the map obtained by composing with the projection $\pi$,
\[
    \Phi = \Big[ \rmX_{g,\underline{V}} \xrightarrow{\Psi} \adSk(M_g;\bbSigma_{g,\underline{V}})  \xrightarrow{\pi} \rmV_\calC(\bbSigma_{g,\underline{V}})  \Big].
\]
It has been shown in \cite[Prop.~4.17]{DGGPR19} that $\Phi$ is an isomorphism. Similarly, one defines the map 
\[
 \Psi' : \rmX'_{g,\underline{V}} \to \Sk'(M'_g;\bbSigma_{g,\underline{V}}),
\]
which sends $x' \in \rmX'_{g,\underline{V}}$ to the bichrome graph contained in the top handlebody in Figure~\ref{F:algebraic_model}. The composition with $\pi'$ results in an isomorphism
\begin{equation}\label{eq:Phi'-def}
    \Phi' = \Big[ \rmX'_{g,\underline{V}} \xrightarrow{\Psi'}
    \Sk'(M'_g;\bbSigma_{g,\underline{V}})  
    \xrightarrow{\pi'} \rmV'_\calC(\bbSigma_{g,\underline{V}})  \Big].
\end{equation}

Let $\bbM : \bbSigma_{g,\underline{V}} \to \bbSigma_{g',\underline{V'}}$ be a morphism of $\adCob_\calC$ of the form $\bbM = (M,T,n)$. In order to compute its action on $x \in \rmX_{g,\underline{V}}$, we need to find the unique $y \in \rmX_{g',\underline{V'}}$ satisfying
\[
    \Phi(y) = \rmV_\calC(\bbM)\big( \Phi(x) \big) \in \rmV_\calC(\bbSigma_{g',\underline{V'}}).
\]
Below we will be interested in projective actions, which allow us to ignore the signature defect. Up to a scalar, $y$ is then determined by the proportionality relation
\[
 [M_{g'},\Psi(y),0] \propto [M_g \cup_{\Sigma_g} M,\Psi(x) \cup_{P_{\underline{V}}} T,0].
\]
Similarly, in order to compute the right projective action of $\bbM$ on $x' \in \rmX'_{g',\underline{V'}}$, we need to find some $y' \in \rmX'_{g,\underline{V}}$ satisfying
\[
 [M'_g,\Psi'(y'),0] \propto [M \cup_{\Sigma_{g'}} M'_{g'},T \cup_{P_{\underline{V'}}} \Psi'(x'),0].
\]

\begin{figure}[t]
 \includegraphics{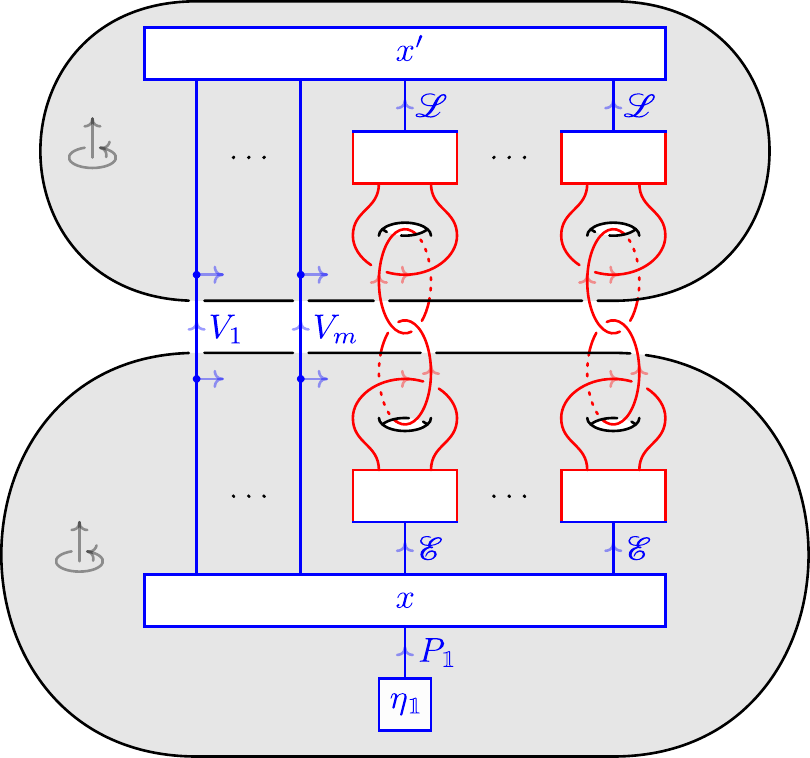}
 \caption{Isomorphism between algebraic and skein models.}
 \label{F:algebraic_model}
\end{figure} 

\subsection{Hopf algebra case}\label{sec:Hopf}

\subsubsection{Factorizable finite-dimensional ribbon Hopf algebras}\label{subsec:Hopf}

Let us translate all the main ingredients of the construction in Hopf algebraic terms when $\calC = \mods{H}$ for a finite-di\-men\-sion\-al ribbon Hopf algebra $H$ over $\Bbbk$. We recall that this means $H$ comes equipped with:
\begin{enumerate}
 \item a \textit{product} $\mu : H \otimes H \to H$ and a \textit{unit} $\eta : \one \to H$;
 \item a \textit{coproduct} $\Delta : H \to H \otimes H$ and a \textit{counit} $\epsilon : H \to \one$;
 \item an \textit{antipode} $S : H \to H$;
 \item an \textit{R-matrix} $R \in H \otimes H$ and a \textit{ribbon element} $v \in H$.
\end{enumerate}
These data should satisfy several axioms, see \cite[Def.~VII.2.2 \& XIV.6.1]{K95}. We adopt some standard notational conventions, like the short notations for product and unit
\[
 \mu(x \otimes y) = xy, \qquad \eta(1) = 1
\]
for all $x,y \in H$, and Sweedler's notation for the coproduct
\[
 \Delta(x) = x_{(1)} \otimes x_{(2)}
\]
for every $x \in H$, which hides a sum. The R-matrix $R = R' \otimes R''$ determines a 
	\textit{Drinfeld element} 
$u \in H$ defined by
\[
 u := S(R'')R'.
\]
Remark that here too a sum is hidden. Then, the Drinfeld and the ribbon element determine a \textit{pivotal element} $g \in H$ defined by
\begin{equation}\label{eq:pivot}
 g := uv^{-1}.
\end{equation}

The finite category $\calC = \mods{H}$ of finite-dimensional left $H$-modules supports the structure of a ribbon category. The monoidal structure is induced by the coproduct $\Delta$ and the counit $\epsilon$ as explained in \cite[Sec.~XI.3.1]{K95}, and the left rigid structure is induced by the antipode $S$ as explained in Example~1 of \cite[Sec.~XIV.2]{K95}. Furthermore:
\begin{enumerate}
 \item The pivotal element $g$ defines, for every $V \in \calC$, the right duality morphisms $\rev_V : V \otimes V^* \to \Bbbk$ and $\rcoev_V : \Bbbk \to V^* \otimes V$ determined by
  \[
   \rev_V(v \otimes \phi) := \phi(g \cdot v), \qquad
   \rcoev_V(1) := \sum_{i=1}^n \phi^i \otimes (g^{-1} \cdot v_i)
  \]
  for all $v \in V$, $\phi \in V^*$, where $\{ v_i \in V \mid 1 \leq i \leq n \}$, $\{ \phi^i \in V^* \mid 1 \leq i \leq n \}$ are dual bases;
 \item The R-matrix $R$ defines, for all $V,W \in \calC$, the braiding isomorphism $c_{V,W} : V \otimes W \to W \otimes V$ determined by
  \[
   c_{V,W}(v \otimes w) := (R'' \cdot w) \otimes (R' \cdot v)
  \]
  for all $v \in V$, $w \in W$;
 \item The ribbon element $v$ defines, for every $W \in \calC$, the twist isomorphism $\theta_W : W \to W$ determined by
  \[
   \theta_W(w) := v^{-1} \cdot w
  \]
  for every $w \in W$.
\end{enumerate}

The coend $\coend = \int^{V \in \calC} V^* \otimes V$ is given by the \textit{coadjoint representation} $\coad$, which is obtained from the dual vector space $H^*$ by considering the left $H$-action
\[
 (x \cdot \phi)(y) := (\coad_x(\phi))(y) = \phi(S(x_{(1)})yx_{(2)})
\]
for all $x,y \in H$ and $\phi \in H^*$, compare\footnote{Lyubashenko actually computes the coend of
a different (but isomorphic) functor,
 while we use the one of \cite[Lem.~3]{K96}.} with \cite[Thm.~3.3.1]{L94}. For every $V \in \calC$, the structure morphism $i_V : V^* \otimes V \to \coad$ is given by
\[
 i_V(\phi \otimes v)(x) := \phi(x \cdot v)
\]
for all $x \in H$, $\phi \in V^*$, and $v \in V$. Similarly, the end $\eend = \int_{V \in \calC} V \otimes V^*$ is given by the \textit{adjoint representation} $\ad$, which is obtained from the vector space $H$ by considering the left $H$-action
\[
 x \cdot y := \ad_x(y) = x_{(1)}yS(x_{(2)})
\]
for all $x,y \in H$, see \cite[Prop.~7.2]{FGR17}. For every $V \in \calC$, the structure morphism $j_V : \ad \to V \otimes V^*$ is given by
\[
 j_V(x) := \sum_{i=1}^n (x \cdot v_i) \otimes \phi^i
\]
for every $x \in H$, where $\{ v_i \in V \mid 1 \leq i \leq n \}$ and $\{ \phi^i \in V^* \mid 1 \leq i \leq n \}$ are dual bases.

The \textit{M-matrix} $M \in H \otimes H$, also known as the \textit{monodromy matrix}, is defined by
\[
 M := R_{21}R_{12},
\]
where $R_{12} = R' \otimes R''$ and $R_{21} = R'' \otimes R'$. The M-matrix $M = M' \otimes M''$ determines a \textit{Drinfeld map} $D : H^* \to H$ defined by
\[
 D(\phi) := \phi(M') M''
\]
for every $\phi \in H^*$, as first considered in \cite[Prop.~3.3]{D90}. We say $H$ is \textit{factorizable} if the Drinfeld map is an isomorphism. Remark that the Drinfeld map is an intertwiner $D \colon \coad \to \ad$ of $H$-modules, and that the categorical Drinfeld map defined in Section~\ref{sec:def_coend} coincides precisely with it, see~\cite[Rem.~7.7]{FGR17}. This means that $\calC=\mods{H}$ is modular if and only if $H$ is factorizable.

The coend $\coend$ admits the structure of a braided Hopf algebra in $\calC$, which we denote $\overline{H}$ \cite{M93}. This is given by
\begin{align*}
 \overline{\mu}(\phi \otimes \psi)(x) &= \phi(x_{(2)} R') \psi(S(R''_{(1)}) x_{(1)} R''_{(2)}), &
 \overline{\eta}(1)(x) &= \varepsilon(x), \\
 \overline{\Delta}(\phi)(x \otimes y) &= \phi(yx) &
 \overline{\epsilon}(\phi) &= \phi(1), \\
 \overline{S}(\phi)(x) &= \phi(S(R')S(x)S(u)^{-1}R'') &
 &
\end{align*}
for all $\phi,\psi \in \overline{H}$ and $x, y \in H$. The \textit{Hopf pairing} $\omega$ of \cite[Thm.~3.7]{L95} is given by
\[
 \omega(\phi \otimes \psi) := \phi(M') \psi(S(M''))
\]
for all $\phi,\psi \in H^*$.

A \textit{two-sided integral} $\intL : \Bbbk \to \coend = \coad$
is determined by a standard right integral of $H$, which means a linear form $\lambda : H \to \Bbbk$ satisfying
\[
 \lambda(x_{(1)}) x_{(2)} = \lambda(x) 1
\]
for every $x \in H$ (see \cite[Def.~10.1.2]{R12}), by setting
\[
 \intL(1) := \lambda.
\] 
Similarly, a \textit{two-sided cointegral} $\cointL : \coend \to \Bbbk$ is determined by a standard two-sided cointegral of $H$, which means an element $\lambda^\co \in H$ satisfying
\[
 x \lambda^\co = \epsilon(x) \lambda^\co = \lambda^\co x
\]
for every $x \in H$ (see \cite[Def.~10.1.1]{R12}), by setting
\[
 \cointL(\varphi) := \varphi(\lambda^\co)
\]
for every $\varphi \in H^*$.

The choice of a right integral $\lambda$ of $H$ (which is unique up to a scalar)  gives rise to a modified trace $\rmt$ on $\Proj(\calC)$ thanks to \cite[Thm.~1]{BBG18}, which is uniquely determined by
\[
 \rmt_H(f) = \lambda(g f(1))
\]
for every $f \in \End_H(H)$, where $H$ denotes the regular representation, 
and
where $g$ denotes the pivotal element defined in~\eqref{eq:pivot}.

For all $g,m \in \N$ and every $\underline{V} = (V_1,\ldots,V_m) \in \calC^{\times m}$, the algebraic pairing $\langle \_,\_ \rangle_{g,\underline{V}} : \calX'_{g,\underline{V}} \times \calX_{g,\underline{V}} \to \Bbbk$ of Section~\ref{sss:skein-alg-summary} is given by
\begin{align*}
 \langle x',x \rangle_{g,\underline{V}} 
 &= \langle x' \circ \left( (\id_{V_1 \otimes \ldots \otimes V_m} \otimes (D^{-1})^{\otimes g} \right) \circ x \rangle_{\epsilon_{\one}}
\end{align*}
for all $x \in \Hom_H(P_{\one},V_1 \otimes \ldots \otimes V_m \otimes \ad^{\otimes g})$ and $x' \in \Hom_H(V_1 \otimes \ldots \otimes V_m \otimes \coad^{\otimes g},\one)$, where for every $f \in \Hom_H(P_{\one},\one)$ the scalar $\langle f \rangle_{\epsilon_{\one}} \in \Bbbk$ is defined by $f = \langle f \rangle_{\epsilon_{\one}} \epsilon_{\one}$ for the projective cover morphism $\epsilon_{\one} : P_{\one} \to \one$. Note that \cite[Lem.~2.10]{DGGPR19} allows us to express $D^{-1} : \ad \to \coad$ as
\[
 D^{-1}(x) = \zeta^{-1}~\lambda\bigl(S^{-1}(x)S(R')S^2(M'')S(u)^{-1}R''\bigr) \cdot  \lambda(M' \_)
\]
for every $x \in \ad$, where $\zeta = \lambda(v)\lambda(v^{-1})$. Indeed, this follows from the remark that $\tilde{\ell} : \coad \to \ad^*$ is simply given by $(S^{-1})^*$.

\subsubsection{Equivalence with renormalized Hennings}

For every object $W \in \calC = \mods{H}$, let us consider the functor 
\[
 (\_^* \otimes \_)^{\times n} \otimes W : (\calC^\op \times \calC)^{\times n} \to \calC
\]
sending every object $(U_1,V_1,\ldots,U_n,V_n) \in (\calC^\op \times \calC)^{\times n}$ to the object 
\[
 U_1^* \otimes V_1 \otimes \ldots \otimes U_n^* \otimes V_n \otimes W \in \calC.
\]
If $Z \in \calC$ is another object, which can also be interpreted as a constant functor $Z : (\calC^\op \times \calC)^{\times n} \to \calC$, let us consider an $n$-dinatural transformation
\[
 \alpha : (\_^* \otimes \_)^{\times n} \otimes W \din Z,
\]
which means $\alpha$ determines a dinatural transformation
\[
 \alpha^\sigma : \left( (\_^* \otimes \_)^{\times n} \otimes W \right) \circ \sigma \din Z \circ \sigma
\]
for the permutation functor 
\[
 \sigma : (\calC^{\times n})^\op \times \calC^{\times n} \to (\calC^\op \times \calC)^{\times n}
\]
sending every object $((U_1,\ldots,U_n),(V_1,\ldots,V_n)) \in (\calC^{\times n})^\op \times \calC^{\times n}$ to the object $((U_1,V_1),\ldots,(U_n,V_n)) \in (\calC^\op \times \calC)^{\times n}$. By universality of the coend $\coend$, the transformation $\alpha$ produces a unique morphism (compare with~\cite[Sec.~3.1 \& Eq.~(26)]{DGGPR19})
\begin{equation}\label{eq:f-alpha}
 f_\calC(\alpha) : \coad^{\otimes n} \otimes W \to Z,
\end{equation}
satisfying
\[
 f_\calC(\alpha) \circ (i_{V_1} \otimes \ldots \otimes i_{V_n} \otimes \id_W) = \alpha_{(V_1,\ldots,V_n)}
\]
for all $V_1, \ldots, V_n \in \calC$. In the next lemma, we compute the unique morphism $f_\calC(\alpha)$ in the case $\calC = \mods{H}$.

\begin{lemma}\label{L:comparison_H-L}
 For $\calC = \mods{H}$, the morphism $f_\calC(\alpha)$ of~\eqref{eq:f-alpha}
 is given by
 \[
 f_\calC(\alpha) =
  \alpha_{(H,\ldots,H)} \circ \left( (\id_{H^*} \otimes \eta)^{\otimes n} \otimes \id_W \right),
 \]
 where $H$ denotes the regular representation and $\eta : \one \to H$ denotes the unit of $H$.
\end{lemma}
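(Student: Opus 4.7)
The plan is to check that the candidate
\[
F := \alpha_{(H,\ldots,H)} \circ \bigl( (\id_{H^*} \otimes \eta)^{\otimes n} \otimes \id_W \bigr)
\]
satisfies the universal property characterising $f_\calC(\alpha)$, namely
\[
F \circ (i_{V_1} \otimes \ldots \otimes i_{V_n} \otimes \id_W) = \alpha_{(V_1,\ldots,V_n)}
\]
for every $(V_1,\ldots,V_n) \in \calC^{\times n}$, and then conclude $F = f_\calC(\alpha)$ by uniqueness.

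The key input is the dinaturality of $\alpha$ along the family of $H$-linear maps $e_v \colon H \to V$, $x \mapsto x \cdot v$, where $H$ denotes the regular representation and $v$ ranges over $V \in \calC$. Using the explicit formula $i_V(\phi \otimes v)(x) = \phi(x \cdot v)$ from Section~\ref{subsec:Hopf}, a direct check gives $e_v^*(\phi) = i_V(\phi \otimes v)$ as elements of $H^* = \coad$ (as underlying vector spaces). Applying dinaturality of $\alpha$ in the $k$-th coordinate to $e_{v_k}$, evaluating at the element $1 \in H$ in the second factor of the $k$-th $(H^* \otimes H)$-slot, and iterating over $k = 1, \ldots, n$ yields
\[
\alpha_{(H,\ldots,H)}\bigl( i_{V_1}(\phi_1 \otimes v_1) \otimes 1 \otimes \ldots \otimes i_{V_n}(\phi_n \otimes v_n) \otimes 1 \otimes w \bigr) = \alpha_{(V_1,\ldots,V_n)}\bigl( \phi_1 \otimes v_1 \otimes \ldots \otimes \phi_n \otimes v_n \otimes w \bigr),
\]
whose left-hand side is precisely the value of $F \circ (i_{V_1} \otimes \ldots \otimes i_{V_n} \otimes \id_W)$ on the same vector.

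The only conceptual subtlety worth flagging is that $\eta \colon \one \to H$ is not $H$-linear, so the formula for $F$ defines a priori only a linear map rather than a morphism in $\calC$. However, uniqueness in the universal property of $\coad$ holds at the level of linear maps as well, because the special case $V_1 = \ldots = V_n = H$ of the defining identity already pins the map down: the structure morphism $i_H \colon H^* \otimes H \to \coad$ admits the linear section $\phi \mapsto \phi \otimes 1$ (indeed $i_H(\phi \otimes 1) = \phi$ under the identification $\coad = H^*$), hence $i_H^{\otimes n} \otimes \id_W$ is surjective onto $\coad^{\otimes n} \otimes W$. Any linear map satisfying the defining identity for $(H,\ldots,H)$ is therefore uniquely determined, so $F$ coincides with $f_\calC(\alpha)$ and is automatically $H$-linear a posteriori. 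No further obstacle arises beyond the bookkeeping of the $n$-fold application of dinaturality.
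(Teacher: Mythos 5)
Your proof is correct, and its computational core coincides with the paper's: both verify the defining identity $F \circ (i_{V_1} \otimes \ldots \otimes i_{V_n} \otimes \id_W) = \alpha_{(V_1,\ldots,V_n)}$ by applying dinaturality of $\alpha$ slot by slot along the module maps $H \to V_k$, $x \mapsto x \cdot v_k$, using that $i_{V}(\phi \otimes v) = \phi(\_ \cdot v)$ under $\coad = H^*$. Where you genuinely diverge is in handling the fact that $(\id_{H^*} \otimes \eta)^{\otimes n} \otimes \id_W$ is not a morphism of $H$-modules. The paper spends the first half of its proof on a direct Sweedler-notation computation showing that the candidate $F$ intertwines the coadjoint action (combining equivariance of $\alpha_{(H,\ldots,H)}$ with dinaturality), and only then invokes uniqueness in the universal property. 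You instead observe that $i_H$ has the linear section $\phi \mapsto \phi \otimes 1$, so $i_H^{\otimes n} \otimes \id_W$ is surjective and the defining identity at $(H,\ldots,H)$ already determines $F$ as a linear map; equality with the morphism $f_\calC(\alpha)$ supplied by the universal property then yields $H$-linearity of $F$ a posteriori. Your route is shorter and correctly identifies (and disposes of) the one real subtlety; the paper's route is more self-contained in that it exhibits $F$ directly as an intertwiner without leaning on the prior existence of $f_\calC(\alpha)$, at the cost of an extra explicit computation. Both arguments are complete.
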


\begin{proof}
 For every 
 \[
  \left( \bigotimes_{k=1}^n \varphi_k \right) \otimes w \in \coad^{\otimes n} \otimes W
 \]
 and every $x \in H$ we have
 \begin{align*}
  &x \cdot \alpha_{(H,\ldots,H)} \left( \left( \bigotimes_{k=1}^n (\varphi_k \otimes 1) \right) \otimes w \right) \\*
  &\hspace*{\parindent} = \alpha_{(H,\ldots,H)} \left( \left( \bigotimes_{k=1}^n (\varphi_k(S(x_{(2k-1)}) \_) \otimes x_{(2k)}) \right) \otimes (x_{(2n+1)} \cdot w) \right) \\*
  &\hspace*{\parindent} = \alpha_{(H,\ldots,H)} \left( \left( \bigotimes_{k=1}^n (\varphi_k(S(x_{(2k-1)}) \_ x_{(2k)}) \otimes 1) \right) \otimes (x_{(2n+1)} \cdot w) \right) \\*
  &\hspace*{\parindent} = \alpha_{(H,\ldots,H)} \left( \left( \bigotimes_{k=1}^n (\coad_{x_{(k)}}(\varphi_k) \otimes 1) \right) \otimes (x_{(n+1)} \cdot w) \right),
 \end{align*}
 where the first equality follows from the fact that $\alpha_{(H,\ldots,H)} : (H^* \otimes H)^{\otimes n} \otimes W \to Z$ is an intertwiner, and the second one from the $n$-dinaturality of $\alpha$. This proves that
 \[
  \alpha_{(H,\ldots,H)} \circ \left( (\id_{H^*} \otimes \eta)^{\otimes n} \otimes \id_W \right)
 \]
 defines an intertwiner $f_\calC(\alpha) : \coad^{\otimes n} \otimes W \to Z$. Now the claim is checked by showing that 
 \[
  \alpha_{(H,\ldots,H)} \circ \left( (\id_{H^*} \otimes \eta)^{\otimes n} \otimes \id_W \right) \circ (i_{V_1} \otimes \ldots \otimes i_{V_n} \otimes \id_W) = \alpha_{(V_1,\ldots,V_n)}
 \]
 for all $V_1, \ldots, V_n \in \calC$. For every 
 \[
  \left( \bigotimes_{k=1}^n (\varphi_k \otimes v_k) \right) \otimes w \in \left( \bigotimes_{k=1}^n (V_k^* \otimes V_k) \right) \otimes W
 \]
 the $n$-dinaturality of $\alpha$ implies
 \begin{align*}
  \alpha_{(V_1,\ldots,V_n)} \left( \left( \bigotimes_{k=1}^n (\varphi_k \otimes v_k) \right) \otimes w \right) 
  &= \alpha_{(H,\ldots,H)} \left( \left( \bigotimes_{k=1}^n (\varphi_k(\_ \cdot v_k) \otimes 1) \right) \otimes w \right) \\*
  &= \alpha_{(H,\ldots,H)} \left( \left( \bigotimes_{k=1}^n \left( i_{V_k}(\varphi_k \otimes v_k) \otimes 1 \right) \right) \otimes w \right).
 \end{align*}
\end{proof}

Thanks to Lemma~\ref{L:comparison_H-L}, when $\calC = \mods{H}$ we can make the algorithm for the computation of the Lyubashenko-Reshetikhin-Turaev functor of Section~\ref{sss:LRT-functor} more explicit. Indeed, in order to evaluate $F_\intL(T)$ for a bichrome graph $T$ with $n$ red components, we can first consider an $n$-bottom graph presentation $\tilde{T}$ of $T$. Next, we can label all red components by the regular representation $H$ and forget the difference between red and blue, thus obtaining a ribbon graph $\tilde{T}_{(H,\ldots,H)}$. At this point, we can simply pre-compose $F_\calC(\tilde{T}_{(H,\ldots,H)})$ with\footnote{In general, when $T$ has incoming boundary vertices, we should actually pre-compose with the tensor product between $(\lambda \otimes 1)^{\otimes n}$ and the identity of the corresponding labels.} the $n$-fold tensor power of $\lambda \otimes 1 \in H^* \otimes H$.
This demonstrates that the renormalized Lyubashenko invariant of equation~\eqref{eq:L'-via-F'} agrees with the renormalized Hennings invariant introduced in~\cite{DGP17}. Moreover, in Appendix~\ref{A:equivalence} we prove that the TQFT construction of~\cite{DGGPR19} reviewed here recovers the one of \cite{DGP17} in the setting of Hopf algebras.

\section{Mapping class group representations from TQFTs}\label{S:MCG}

In this section, we 
    analyze
the definition of the projective mapping class group representations associated with the TQFTs $\rmV_\calC$ and $\rmV'_\calC$, and we realize the corresponding actions of Dehn twists by curve operators. 
For a choice of generators we compute the action on the algebraic state spaces $\rmX'_{g,\underline{V}}$.

\subsection{Mapping class group of a decorated surface}\label{SS:MCG}

Let us start by introducing some terminology and some notation. A \textit{decorated surface} is a pair $(\Sigma,P)$, where $\Sigma$ is a connected closed surface, and where $P \subset \Sigma$ is a blue set, that is, a finite set of oriented framed (blue) points labeled by objects of $\calC$.
If $f : \Sigma \to \Sigma'$ is a diffeomorphism of surfaces, and if $P \subset \Sigma$ is a blue set, then we denote with $f(P) \subset \Sigma'$ the blue set whose labels and framings are obtained from those of $P$ by pushforward. Then, if $(\Sigma,P)$ is a decorated surface, we consider the set of diffeomorphisms of $\Sigma$ preserving $P$,
\[
 \Diff(\Sigma,P) := \{ f \in \Diff(\Sigma) \mid f(P) = P \},
\]
which is a group with respect to composition. We denote with $\Diff_0(\Sigma,P)$ the subgroup whose elements $f$ are isotopic to $\id_\Sigma$ within $\Diff(\Sigma,P)$, and we define the \textit{mapping class group of $(\Sigma,P)$} to be the quotient group
\[
 \MCG(\Sigma,P) := \Diff(\Sigma,P) / \Diff_0(\Sigma,P).
\]
Remark that $\MCG(\Sigma,P)$ fits into the generalized Birman exact sequence
\[
 1 \to \rmB(\Sigma,P) \to \MCG(\Sigma,P) \to \MCG(\Sigma) \to 1,
\]
where $\rmB(\Sigma,P)$ denotes the colored framed braid group, which itself fits into the exact sequence
\[
 1 \to \PB(\Sigma,P) \to \rmB(\Sigma,P) \to \frakS(P) \to 1,
\]
compare with \cite[Thm.~9.1]{FM12}.
Here, $\frakS(P)$ is the colored symmetric group of all permutations of $P$ which preserve colors (i.e. the labels of the blue set), and $\PB(\Sigma,P)$ is the pure framed braid group. It is useful to fix names for a few elements of the group $\rmB(\Sigma,P)$. For all integers $1 \leq i<j \leq m$ satisfying $V_i = V_j$ we set
\begin{equation}\label{E:x_ij}
 x_{i,j} := \pic{braids_x_ij} \in \rmB(\Sigma,P). 
\end{equation}
For all integers $1 \leq i<j \leq m$ we define
\begin{equation}\label{E:w_ij}
 w_{i,j} := \pic{braids_w_ij} \in \PB(\Sigma,P),
\end{equation}
and finally for every integer $1 \leq i \leq m$ we set
\begin{equation}\label{E:v_i}
 v_i := \pic{braids_v_i} \in \PB(\Sigma,P).
\end{equation}
The Reshetikhin-Turaev functor $F_\calC$ assigns to these elements endomorphisms 
\[
 F_\calC(x_{i,j}), F_\calC(w_{i,j}), F_\calC(v_i) \in \End_\calC(V_1 \otimes \ldots \otimes V_m).
\]
For example,
\begin{align*}
F_\calC(w_{1,2}) &= (c_{V_2,V_1} \circ c_{V_1,V_2}) \otimes \id_{V_3 \otimes \cdots \otimes V_m} ,
\\
 F_\calC(v_1) &= \theta_{V_1} \otimes \id_{V_2 \otimes \cdots \otimes V_m} .
\end{align*}

\subsection{Projective representations from mapping cylinders}\label{sec:mapping_cyl}

If $\bbSigma = (\Sigma,P,\lambda)$ is a connected object of $\adCob_\calC$, then the \textit{mapping cylinder} of an element $f \in \Diff(\Sigma,P)$ is defined as the morphism $\bbSigma \times \bbI_f : \bbSigma \to \bbSigma$ of $\adCob_\calC$ given by
\[
 (\Sigma \times I_f,P \times I,0),
\]
where the cobordism $\Sigma \times I_f$ has incoming and outgoing boundary identifications
\[
 (f,0) : \Sigma \to \Sigma \times \{ 0 \}, \qquad
 (\id_\Sigma,1) : \Sigma \to \Sigma \times \{ 1 \},
\]
respectively. It is easy to check that 
\begin{equation}\label{eq:map-cyl-compos}
 \left( \bbSigma \times \bbI_{f'} \right) \circ \left( \bbSigma \times \bbI_f \right) \sim \bbSigma \times \bbI_{f' \circ f}
\end{equation}
as morphisms of $\adCob_\calC$, where $\sim$ denotes the equivalence relation determined by forgetting about signature defects. Let us stress that in general the composition~\eqref{eq:compose-Cob} will produce a non-trivial signature defect, because a mapping cylinder, as well as any other morphism of $\adCob_\calC$, is not required to preserve the Lagrangian subspaces of its source and target under push-forward and pull-back.

If $[f] = [f']$ in $\MCG(\Sigma,P)$, then any isotopy $h : \Sigma \times I \to \Sigma$ between $f' \circ f^{-1}$ and $\id_\Sigma$ determines an isomorphism between the cobordisms $\Sigma \times I_f$ and $\Sigma \times I_{f'}$. 
This means we obtain a map
\begin{align*}
 \repT : \MCG(\Sigma,P) & \to \GL_\Bbbk(\rmV_\calC(\bbSigma)) \\*
 {}[f] & \mapsto \rmV_\calC(\bbSigma \times \bbI_f). 
\end{align*}
Since \eqref{eq:map-cyl-compos} only holds up to signature defects, as a consequence of the composition rule \eqref{eq:compose-Cob}, for a pair of elements $[f],[f'] \in \MCG(\Sigma,P)$ the linear map $\repT[f' \circ f]$ may differ from $\repT[f'] \circ \repT[f]$ by a power of $\delta$, as defined in \eqref{eq:delta-def}. Therefore, if $\delta\neq 1$, the map $\repT$ is \textit{not} a group homomorphism, but instead only defines a projective representation. We denote the resulting group homomorphism by
\[
        \prepT
        :=
        \Big[ 
    \MCG(\Sigma,P) \xrightarrow{\repT} \GL_\Bbbk(\rmV_\calC(\bbSigma))
    \twoheadrightarrow \PGL_\Bbbk(\rmV_\calC(\bbSigma))
        \Big].
\]
However, one can obtain a linear representation of a central extension of the mapping class group, see Appendix~\ref{A:extended_MCG_reps}.

For $\rmV'_\calC$ we get a corresponding map
\begin{align}
 \repT' : \MCG(\Sigma,P) & \to \GL_\Bbbk(\rmV'_\calC(\bbSigma)) \label{eq:sigma'-def} \\*
 {}[f] & \mapsto \rmV'_\calC(\bbSigma \times \bbI_f)^{-1} \nonumber
\end{align}
and group homomorphism
\begin{equation}\label{eq:barsigma'-def}
 \prepT' :=
        \Big[ 
    \MCG(\Sigma,P) \xrightarrow{\repT'} \GL_\Bbbk(\rmV'_\calC(\bbSigma))
    \twoheadrightarrow \PGL_\Bbbk(\rmV'_\calC(\bbSigma))
        \Big].
\end{equation}
The inverse in \eqref{eq:sigma'-def} is needed for $\prepT'$ to be a group homomorphism, as the source category for $\rmV'_\calC$ is $(\adCob_\calC)^{\op}$.

Because of the duality \eqref{eq:V-V'-dual} between $\rmV_\calC$ and $\rmV'_\calC$ with respect to the pairing $\langle \_,\_ \rangle_{\bbSigma}$,
for every $[f] \in \MCG(\Sigma,P)$ we have
\begin{equation}\label{eq:sig-sig'-dual}
 \repT'[f] = (\repT[f]^t)^{-1}.
\end{equation}

\subsection{Dehn twists and curve operators}\label{SS:Dehn_twist_curve_op}

If $\bbSigma = (\Sigma,P,\lambda)$ is a connected object of $\adCob_\calC$, and $\gamma \subset \Sigma$ is a
 simple closed curve, then let us denote with $\tau_\gamma \in \Diff(\Sigma,P)$ the \text{Dehn twist} along $\gamma$. This is the self-diffeomorphism of $\Sigma$ restricting to the identity outside of a tubular neighborhood of $\gamma$, and given by
\begin{align*}
 S^1 \times I & \to S^1 \times I \\*
 ((\cos(\theta),\sin(\theta)),t) & \mapsto ((\cos(\theta + 2 \pi t),\sin(\theta + 2 \pi t)),t)
\end{align*}
in local coordinates around $\gamma$, up to appropriately smoothing at transitions points. There are, up to isotopy, two orientation-preserving ways to embed the above neighborhood into $\Sigma$, related by $(\vartheta,t) \mapsto (-\vartheta,-t)$. However, both of these describe the same diffeomorphism. This is the reason why it is not necessary to orient $\gamma$. Instead, we could choose to replace $2\pi t$ with $-2 \pi t$, which determines a left-handed Dehn twist, as opposed to the right-handed one defined above, see \cite[Sec.~3.1.1]{FM12}. The two possible Dehn twists thus obtained are inverse with each other.

Let $\gamma_\pm$ denote the red knot $\gamma \times \left\{ \frac 12 \right\}$ inside $\Sigma \times I$ with framing $\pm 1$ relative to the surface $\Sigma \times \left\{ \frac 12 \right\}$. The \textit{curve cylinder associated with $\gamma_\pm$} is the morphism $\bbSigma \times \bbI_{\gamma_\pm} : \bbSigma \to \bbSigma$ of $\adCob_\calC$ given by
\[
\bbSigma \times \bbI_{\gamma_\pm} :=
 \left( \Sigma \times I,(P \times I) \cup \gamma_\pm,0 \right).
\]
It is a classical remark, see for instance the proof of \cite[Thm.~2]{L62}, that there exists an isomorphism of cobordisms
\begin{equation}\label{E:mapping_cyl_eq_surgereg_cyl}
    \Sigma \times I_{\tau_\gamma^{\pm 1}} \cong (\Sigma \times I)(\gamma_\mp),
\end{equation}
where the cobordism $(\Sigma \times I)(\gamma_\mp)$ is obtained by performing 2-surgery on $\Sigma \times I$ 
along $\gamma_\mp$, with both incoming and outgoing boundary identifications induced by $\id_\Sigma$. Combining this with \cite[Prop.~4.10]{DGGPR19} we immediately get
\begin{equation}\label{E:coeff_between_mapping_and_curve_cylinder}
 \rmV_\calC(\bbSigma \times \bbI_{\tau_\gamma^{\pm 1}}) = \rmV_\calC((\Sigma \times I)(\gamma_\mp),P \times I,0) = \calD^{-1} \delta^{m_\mp} 
 \rmV_\calC(\bbSigma \times \bbI_{\gamma_\mp}).
\end{equation}
for some $m_\mp \in \Z$. Remark that this exponent is determined by the composition rule \eqref{eq:compose-Cob}, which intervenes in \cite[Prop.~4.10]{DGGPR19}.

\begin{remark}\label{R:framing_anomaly}
 If the homology class of $\gamma$ belongs to the Lagrangian $\lambda$, then $m_\mp = \pm 1$, and thus, thanks to \eqref{eq:delta-def}, we get
 \[
  \rmV_\calC(\bbSigma \times \bbI_{\tau_\gamma^{\pm 1}}) = \Delta_\mp^{-1} \rmV_\calC(\bbSigma \times \bbI_{\gamma_\mp}).
 \]
 For a proof of this, see Appendix \ref{A:Maslov}.
\end{remark}

Let us also give a slightly different argument relating $\rmV_\calC(\bbSigma \times \bbI_{\tau_\gamma^{\pm 1}})$ to $\rmV_\calC(\bbSigma \times \bbI_{\gamma_\mp})$ (and ditto for $\rmV'_\calC$) which uses algebraic properties of the TQFT rather than the topology of surgered manifolds. Denote the image of $\rmV_\calC(\bbSigma \times \bbI_{\gamma_\pm}) \in \GL_\Bbbk(\rmV_\calC(\bbSigma))$ in the projectivization by $[\rmV_\calC(\bbSigma \times \bbI_{\gamma_\pm})] \in \PGL_\Bbbk(\rmV_\calC(\bbSigma))$. 
Then:

\begin{lemma}\label{L:mapping_and_curve_cylinder}
 If $(\Sigma,P)$ is a decorated surface and $\gamma \subset \Sigma$ is a simple closed curve, then $\prepT(\tau_\gamma^{\pm 1}) = [\rmV_\calC(\bbSigma \times \bbI_{\gamma_\mp})]$ and $\prepT'(\tau_\gamma^{\pm 1}) = [\rmV'_\calC(\bbSigma \times \bbI_{\gamma_\pm})]$.
\end{lemma}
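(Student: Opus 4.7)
The plan is to unwind the definitions of $\prepT$ and $\prepT'$ and apply equation~\eqref{E:coeff_between_mapping_and_curve_cylinder}. By definition, $\prepT(\tau_\gamma^{\pm 1})$ is the image in $\PGL_\Bbbk(\rmV_\calC(\bbSigma))$ of $\repT(\tau_\gamma^{\pm 1}) = \rmV_\calC(\bbSigma \times \bbI_{\tau_\gamma^{\pm 1}})$. Equation~\eqref{E:coeff_between_mapping_and_curve_cylinder} expresses this operator as the nonzero scalar $\calD^{-1}\delta^{m_\mp} \in \Bbbk^\times$ times $\rmV_\calC(\bbSigma \times \bbI_{\gamma_\mp})$, and this scalar becomes trivial in $\PGL$, yielding the first identity $\prepT(\tau_\gamma^{\pm 1}) = [\rmV_\calC(\bbSigma \times \bbI_{\gamma_\mp})]$.

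For the second identity I would first transfer the scalar relation to the dual TQFT. Formula~\eqref{eq:V-V'-dual} identifies $\rmV'_\calC(\bbM)$ with the transpose of $\rmV_\calC(\bbM)$ under the pairing $\langle\_,\_\rangle_{\bbSigma}$, so transposing both sides of~\eqref{E:coeff_between_mapping_and_curve_cylinder} yields
\[
\rmV'_\calC(\bbSigma \times \bbI_{\tau_\gamma^{\pm 1}}) = \calD^{-1}\delta^{m_\mp} \, \rmV'_\calC(\bbSigma \times \bbI_{\gamma_\mp}).
\]
Using the definition $\repT'(\tau_\gamma^{\pm 1}) = \rmV'_\calC(\bbSigma \times \bbI_{\tau_\gamma^{\pm 1}})^{-1}$ and passing to the projectivization then gives the intermediate formula $\prepT'(\tau_\gamma^{\pm 1}) = [\rmV'_\calC(\bbSigma \times \bbI_{\gamma_\mp})]^{-1}$.

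It remains to exchange $\gamma_\mp$ for $\gamma_\pm$, and I would do so by exploiting that $\prepT'$ is a group homomorphism and that $\tau_\gamma^{+1}$ and $\tau_\gamma^{-1}$ are mutually inverse in $\MCG(\Sigma,P)$. Applying the intermediate formula to both signs, the relation $\prepT'(\tau_\gamma^{+1}) \cdot \prepT'(\tau_\gamma^{-1}) = \id$ becomes $[\rmV'_\calC(\bbSigma \times \bbI_{\gamma_-})]^{-1} \cdot [\rmV'_\calC(\bbSigma \times \bbI_{\gamma_+})]^{-1} = \id$, which rearranges to $[\rmV'_\calC(\bbSigma \times \bbI_{\gamma_\mp})]^{-1} = [\rmV'_\calC(\bbSigma \times \bbI_{\gamma_\pm})]$ and completes the proof.

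I do not expect a genuine obstacle here: the whole argument is essentially bookkeeping around~\eqref{E:coeff_between_mapping_and_curve_cylinder}. The only delicate point is the sign swap $\gamma_\mp \leftrightarrow \gamma_\pm$ between the two identities, which originates from the contravariance of $\rmV'_\calC$ (and the resulting inverse in the definition of $\repT'$) and has to be absorbed at the very end using the mutual inverse relation in the mapping class group.
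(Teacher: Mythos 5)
Your argument is correct, but it takes a genuinely different route from the paper's. You deduce the lemma directly from \eqref{E:coeff_between_mapping_and_curve_cylinder}, which the paper establishes just beforehand via Lickorish's surgery description \eqref{E:mapping_cyl_eq_surgereg_cyl} of a Dehn twist together with \cite[Prop.~4.10]{DGGPR19}; since that equation already identifies $\rmV_\calC(\bbSigma \times \bbI_{\tau_\gamma^{\pm 1}})$ with a non-zero multiple of $\rmV_\calC(\bbSigma \times \bbI_{\gamma_\mp})$, the first identity is immediate in $\PGL$, and your handling of the dual statement --- transposing the scalar relation via \eqref{eq:V-V'-dual}, inverting per the definition of $\repT'$, and then using that $\prepT'$ is a group homomorphism applied to $\tau_\gamma \tau_\gamma^{-1} = \id$ to trade $\gamma_\mp$ for $\gamma_\pm$ --- is clean and closes the one genuinely delicate point (the framing flip caused by contravariance). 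The paper, by contrast, deliberately does \emph{not} reuse \eqref{E:coeff_between_mapping_and_curve_cylinder}: it announces the lemma as ``a slightly different argument\dots which uses algebraic properties of the TQFT rather than the topology of surgered manifolds,'' and proves it by representing state-space vectors as bichrome graphs in a handlebody $M$ in which $\gamma$ bounds a disc, extending $\tau_\gamma$ to a self-diffeomorphism of $M$, and then trading the twisted graph for the graph with the red component $\gamma_-$ adjoined, at the cost of a factor $\Delta_-^{-1}$, via the edge slide property (Figure~\ref{F:Dehn_twist}); the dual case is then dispatched by \eqref{eq:V-V'-dual} and \eqref{eq:sig-sig'-dual}. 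So your proof is shorter and purely formal bookkeeping, but it leans on the surgery presentation of mapping cylinders, whereas the paper's skein-theoretic proof provides an independent derivation of the same proportionality --- which is precisely the added value the authors intended the lemma to carry.
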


\begin{proof}
    We will only treat $\prepT(\tau_\gamma)$ and $\prepT'(\tau_\gamma)$ explicitly, since the corresponding statement for $\tau_\gamma^{-1}$ can be seen analogously.

As recalled in Section~\ref{sss:skein-alg-summary}, if $M$ is a fixed connected cobordism from $\varnothing$ to $\Sigma$, then every vector of $\rmV_\calC(\bbSigma)$ can be represented by a linear combination of bichrome graphs $T$ inside $M$ from $\varnothing$ to $P$. In choosing $M$, let us make sure $\gamma$ bounds a disc in it.
Then the Dehn twist $\tau_\gamma$ extends to a self-diffeomorphism $\tilde{\tau}_\gamma$ of the 3-manifold $M$, which defines an isomorphism between the cobordism $M$ and the cobordism $\tilde{\tau}_\gamma(M)$ whose underlying 3-manifold is still $M$, but whose outgoing boundary identification is $\tilde{\tau}_\gamma \circ f_M$, where $f_M$ is the outgoing boundary identification of $M$. This means
\[
 (M,T,0) = (\tilde{\tau}_\gamma(M),\tilde{\tau}_\gamma(T),0)
\]
as morphisms of $\adCob_\calC$. But now, identifying the outgoing boundary of $\tilde{\tau}_\gamma(M)$ to the incoming boundary of $\Sigma \times I_{\tau_\gamma}$ using the diffeomorphism 
\[
 \partial M \xrightarrow{(\tilde{\tau}_\gamma \circ f_M)^{-1}} \Sigma \xrightarrow{(\tau_\gamma,0)} \Sigma \times \{ 0 \}
\]
is the same as using
\[
 \partial M \xrightarrow{f_M^{-1}} \Sigma \xrightarrow{(\id_\Sigma,0)} \Sigma \times \{ 0 \}.
\]
This means
\[
 \tilde{\tau}_\gamma(M) \cup_\Sigma (\Sigma \times I_{\tau_\gamma}) = M \cup_\Sigma (\Sigma \times I)
\]
as cobordisms. However, the signature defect determined by the composition rule \eqref{eq:compose-Cob} for the morphism 
\[
 (\bbSigma \times \bbI_{\tau_\gamma}) \circ (\tilde{\tau}_\gamma(M),\tilde{\tau}_\gamma(T),0)
\]
might be non-trivial in general. This is why we only have
 \[
  (\bbSigma \times \bbI_{\tau_\gamma}) \circ (M,T,0) = (\bbSigma \times \bbI_{\tau_\gamma}) \circ (\tilde{\tau}_\gamma(M),\tilde{\tau}_\gamma(T),0) \sim (M,\tilde{\tau}_\gamma(T),0),
 \]
 where $\sim$ was introduced in \eqref{eq:map-cyl-compos}. But now
 \begin{align*}
  \repT(\tau_\gamma)([M,T,0]) &= [(\bbSigma \times \bbI_{\tau_\gamma}) \circ (M,T,0)] \\*
  &\propto [M,\tilde{\tau}_\gamma(T),0]
  \overset{\raisebox{2.5pt}{$(\ast)$}} \propto [M,T \cup \gamma_-,0]
  = \rmV_\calC(\bbSigma \times \bbI_{\gamma_-})([M,T,0])
 \end{align*}
as elements of $\rmV_\calC(\bbSigma)$. Step $(\ast)$ is obtained by first adding an unknotted red component of framing $-1$, at the cost of multiplying by $\Delta_-^{-1}$, and then applying the slide property of \cite[Prop.~3.7]{DGGPR19}, see Figure~\ref{F:Dehn_twist}. 

 \begin{figure}[t]
     \includegraphics{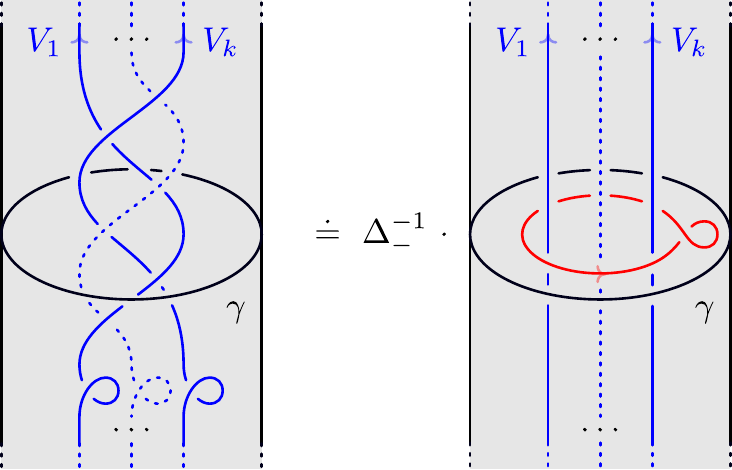}
     \caption{Skein equivalence inside a neighborhood of the disc bounded by $\gamma$ in $M$, which contains portions of edges of $T$ labeled by some objects $V_1,\ldots,V_k \in \calC$ (red edges could also appear). 
     }
     \label{F:Dehn_twist}
 \end{figure}
 
This shows the statement for $\prepT(\tau_\gamma)$, while the claim for $\prepT'(\tau_\gamma)$ follows from the duality relations \eqref{eq:V-V'-dual} and \eqref{eq:sig-sig'-dual}.
\end{proof}

\subsection{Action of generators on algebraic state spaces}\label{SS:algebraic_translation}

In this section we describe one of the main outcomes of this paper, namely the projective action of a set of generators of the mapping class group of our chosen standard surface $(\Sigma_g,P_{\underline{V}})$ from \eqref{eq:standard-surf_gV}
on the algebraic state space $\rmX'_{g,\underline{V}}$ as in \eqref{eq:alg-statespace}.

We use the isomorphisms $\Phi'$ from \eqref{eq:Phi'-def} to transport $\repT'$ as given in \eqref{eq:sigma'-def} from $\rmV'_\calC(\bbSigma_{g,\underline{V}})$ to $\rmX'_{g,\underline{V}}$. That is, we define the map 
\begin{equation}\label{eq:rho'-def}
 \repCpr : \MCG(\Sigma_g,P_{\underline{V}}) \to \GL_\Bbbk(\rmX'_{g,\underline{V}})
\end{equation}
by requiring the following diagram to commute for all $[f] \in \MCG(\Sigma,P)$:
\begin{center}
 \begin{tikzpicture}
  \node (P1) at (150:1.5) {$\rmX'_{g,\underline{V}}$};
  \node (P2) at (30:1.5) {$\rmX'_{g,\underline{V}}$};
  \node (P3) at (-150:1.5) {$\rmV'_\calC(\bbSigma_{g,\underline{V}})$};
  \node (P4) at (-30:1.5) {$\rmV'_\calC(\bbSigma_{g,\underline{V}})$};
  \draw
  (P1) edge[->] node[left] {\scriptsize $\Phi'$} (P3)
  (P1) edge[->] node[above] {\scriptsize $\repCpr(f)$} (P2)
  (P2) edge[->] node[right] {\scriptsize $\Phi'$} (P4)
  (P3) edge[->] node[below] {\scriptsize $\repT'(f)$} (P4);
 \end{tikzpicture}
\end{center}
As for $\repT'$, in general only the projectivization
\begin{equation}\label{eq:prho'-def}
 \prepCpr : \MCG(\Sigma_g,P_{\underline{V}}) \to \PGL_\Bbbk(\rmX'_{g,\underline{V}})
\end{equation}
of  $\repCpr$ is a group homomorphism.

Note that while the underlying vector space $\rmX'_{g,\underline{V}}=\calC(V_1 \otimes \ldots \otimes V_m \otimes \coend^{\otimes g},\one)$ and the group homomorphism $\prepCpr$ only depend on $\calC$, the original map $\repCpr$ also depends on the choice of the Lagrangian subspace of the standard object $\bbSigma_{g,\underline{V}}$.

We now turn to the explicit computation of $\repCpr$ for a set of generators of the mapping class group. We start with a few preliminary definitions which closely follow \cite{L94}.
The \textit{monodromy} $\Omega : \coend \otimes \coend \to \coend \otimes \coend$ is the unique morphism of $\calC$ satisfying, for all $X,Y \in \calC$, the identity
\begin{equation}\label{E:monodromy}
 \Omega \circ (i_X \otimes i_Y) = (i_X \otimes i_Y) \circ \left( \id_{X^*} \otimes (c_{Y^*,X} \circ c_{X,Y^*}) \otimes \id_Y \right).
\end{equation}
Similarly, if $X$ is an object of $\calC$, the \textit{left partial monodromy} $\Omega_{\rmL,X} : X \otimes \coend \to X \otimes \coend$ is uniquely determined, for every $Y \in \calC$, by
\begin{equation}\label{E:left_partial_monodromy}
 \Omega_{\rmL,X} \circ (\id_X \otimes i_Y) = (\id_X \otimes i_Y) \circ \left( (c_{Y^*,X} \circ c_{X,Y^*}) \otimes \id_Y \right),
\end{equation}
and if $Y$ is an object of $\calC$, the \textit{right partial monodromy} $\Omega_{\rmR,Y} : \coend \otimes Y \to \coend \otimes Y$ is uniquely determined, for every $X \in \calC$, by
\begin{equation}\label{E:right_partial_monodromy}
 \Omega_{\rmR,Y} \circ (i_X \otimes \id_Y) = (i_X \otimes \id_Y) \circ \left( \id_{X^*} \otimes (c_{Y,X} \circ c_{X,Y}) \right).
\end{equation}
Next, the \textit{S-} and \textit{T-transformation} $\calS, \calT : \coend \to \coend$ are uniquely specified, for every $X \in \calC$, by
\begin{align}
\label{E:S}
 \calS \circ i_X &= (\counitL \otimes \id_\coend) \circ \Omega \circ (i_X \otimes \intL),
 \\
\label{E:T}
 \calT \circ i_X &= i_X \circ (\id_{X^*} \otimes \theta_X).
\end{align}
A graphical presentation of the above equalities as \textit{skein equivalences} is given in Figure~\ref{fig:bichrome-preliminary-defs} using $n$-bottom graphs (possibly up to pre-composing with braidings to reorder red vertices to the left), which evaluate to dinatural transformations as outlined in Section~\ref{sss:LRT-functor} (or to morphisms in $\calC$ if $n=0$). The symbol $\doteq$ means that both sides evaluate to the same dinatural transformation (respectively morphism). For example, both sides of the skein equivalence for $\Omega_{\rmL,X}$ evaluate to the same family of morphisms $\{ \eta_Y : X \otimes Y^* \otimes Y \to X \otimes \coend \}_{Y \in \calC}$ which is dinatural in $Y$. This definition of skein equivalence is slightly more general than the one used in \cite[Sec.~4.2]{DGGPR19}, where only blue boundary vertices were allowed, and no $n$-bottom graphs with $n>0$. Note that, in order to arrive at the bichrome presentation of $\calS$, one needs to use that the counit $\counitL$ and the integral $\intL$ of $\coend$ can be expressed as a red cap and cup respectively.

\begin{figure}[b]
    \centering
 \includegraphics{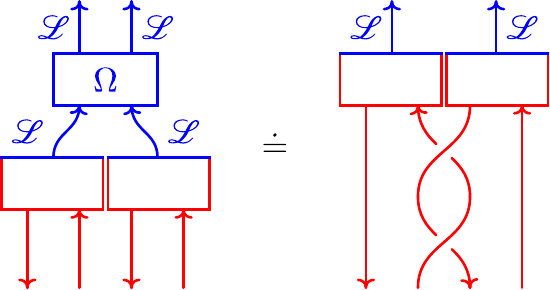} \\[1em]
 \includegraphics{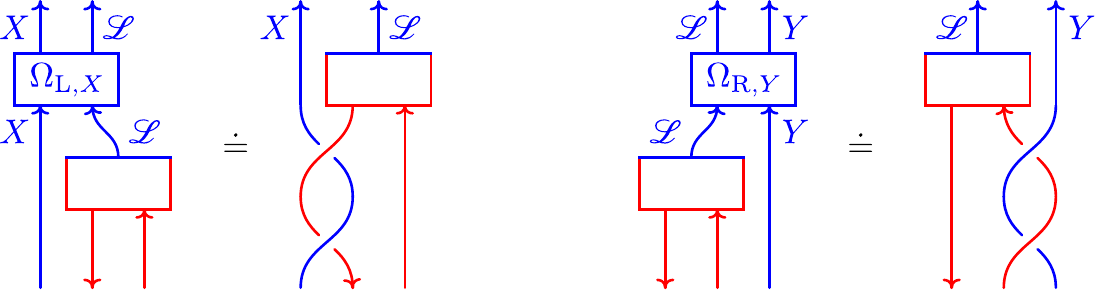}\\[1em]
 \includegraphics{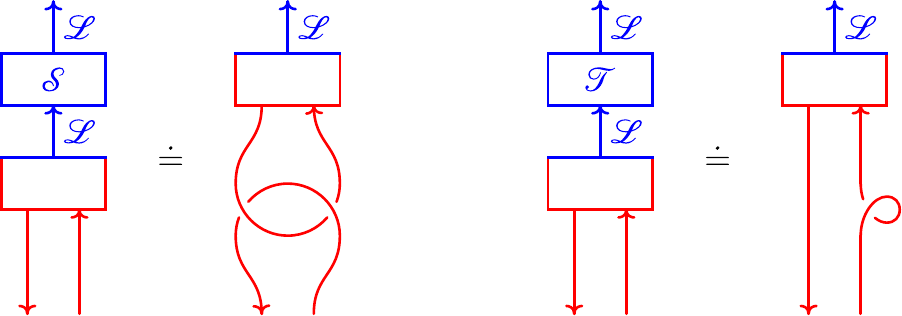}
\caption{Graphical representation of the definitions in \eqref{E:monodromy}--\eqref{E:T}.}
    \label{fig:bichrome-preliminary-defs}
\end{figure}

It will also be useful to fix names for $\calH : \coend \otimes \coend \to \coend \otimes \coend$ defined by
\begin{equation}\label{E:H}
 \calH := \Omega \circ (\calT \otimes \calT),
\end{equation}
as well as for its left and right partial versions $\calH_{\rmL,X} : X \otimes \coend \to X \otimes \coend$ and $\calH_{\rmR,Y} : \coend \otimes Y \to \coend \otimes Y$ defined, for all $X,Y \in \calC$, by
\begin{align}
 \calH_{\rmL,X} &:= \Omega_{\rmL,X} \circ (\vartheta_X \otimes \calT), \label{E:H_L} \\*
 \calH_{\rmR,Y} &:= \Omega_{\rmR,Y} \circ (\calT \otimes \vartheta_Y). \label{E:H_R}
\end{align}

\begin{figure}[t]
\[
 \includegraphics{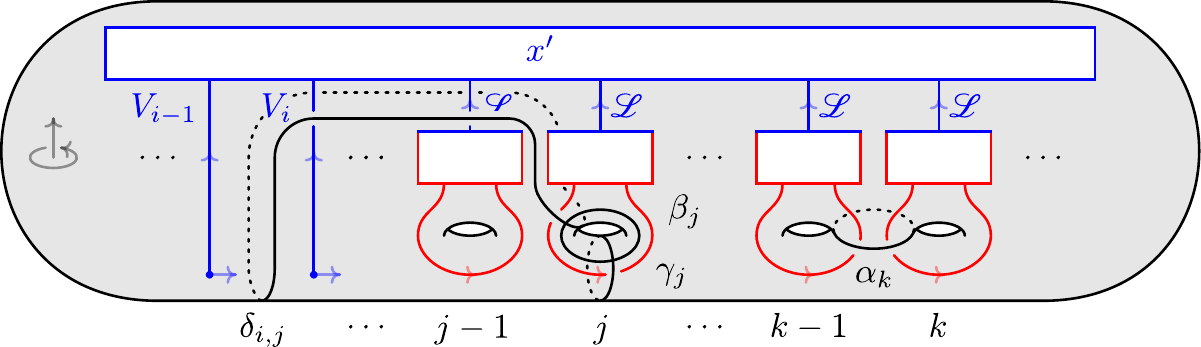}
\]
\caption{Simple closed curves used in the description of generators of the mapping class group: $\alpha_k, \beta_j, \gamma_j, \delta_{i,j} \subset \Sigma_g$ for all integers $1 \leq i \leq m$, $1 \leq j \leq g$, and $2 \leq k \leq g$.}
\label{fig:curves-for-generators}
\end{figure}

As explained in \cite[Sec.~4.5]{L94}, and using the notation in Section~\ref{SS:MCG} and Figure~\ref{fig:curves-for-generators}, the mapping class group $\MCG(\Sigma_g,P_{\underline{V}})$ is generated by:
\begin{itemize}
 \item $x_{i,j}$ for all integers $1 \leq i < j \leq m$ satisfying $V_i = V_j$;
 \item $w_{i,j}$ for all integers $1 \leq i < j \leq m$;
 \item $v_i$ for every integer $1 \leq i \leq m$;
 \item $H_k := \tau_{\alpha_k}^{-1}$ for every integer $2 \leq k \leq g$;
 \item $S_j := \tau_{\gamma_j} \circ \tau_{\beta_j} \circ \tau_{\gamma_j}$ for every integer $1 \leq j \leq g$;
 \item $T_j := \tau_{\gamma_j}^{-1}$ for every integer $1 \leq j \leq g$;
 \item $H_{i,j} := \tau_{\delta_{i,j}}^{-1}$ for all integers $1 \leq i \leq m$, $1 \leq j \leq g$.
\end{itemize}
Lyubashenko considers some additional generators corresponding to inverse Dehn twists around simple closed curves which are homologous to $\gamma_j$ for all $1 \leq j \leq g$. However, these extra generators are redundant, as follows from \cite[Lem.~5]{L64}. 

\begin{proposition}\label{P:equivalence}
The map
 $\repCpr : \MCG(\Sigma_g,P_{\underline{V}}) \to \GL_\Bbbk(\rmX'_{g,\underline{V}})$ satisfies
 \begin{align*}
  \repCpr(x_{i,j})(x') &= x' \circ \left( F_\calC(x_{i,j})^{-1} \otimes \id_{\coend^{\otimes g}} \right) \\*
  \repCpr(w_{i,j})(x') &= x' \circ \left( F_\calC(w_{i,j})^{-1} \otimes \id_{\coend^{\otimes g}} \right) \\*
  \repCpr(v_i)(x') &= x' \circ \left( F_\calC(v_i)^{-1} \otimes \id_{\coend^{\otimes g}} \right) \\*
  \repCpr(H_k)(x') &= \delta^{a_k} \cdot x' \circ (\id_{V_1 \otimes \ldots \otimes V_m \otimes \coend^{\otimes k-2}} \otimes \calH \otimes \id_{\coend^{\otimes g-k}}) \\*
  \repCpr(S_j)(x') &= \calD^{-1} \delta^{b_j} \cdot x' \circ (\id_{V_1 \otimes \ldots \otimes V_m \otimes \coend^{\otimes j-1}} \otimes \calS \otimes \id_{\coend^{\otimes g-j}}) \\*
  \repCpr(T_j)(x') &= \delta^{c_j} \cdot x' \circ (\id_{V_1 \otimes \ldots \otimes V_m \otimes \coend^{\otimes j-1}} \otimes \calT \otimes \id_{\coend^{\otimes g-j}}) \\*
  \repCpr(H_{i,j})(x') &= \delta^{d_{i,j}} \cdot x' \circ \left( \id_{V_1 \otimes \ldots \otimes V_{i-1}} \otimes \calH_{\rmL,V_i \otimes \ldots \otimes V_m \otimes \coend^{\otimes j-1}} \otimes \id_{\coend^{\otimes g-j}} \right)
 \end{align*}
 for every $x' \in \rmX'_{g,\underline{V}} = \calC(V_1 \otimes \ldots \otimes V_m \otimes \coend^{\otimes g},\one)$, and for some $a_k,b_j,c_j,d_{i,j} \in \mathbb{Z}$.
\end{proposition}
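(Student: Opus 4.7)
The strategy is to transport the computation across the isomorphism $\Phi'$ of \eqref{eq:Phi'-def}, so that the action on $\rmV'_\calC(\bbSigma_{g,\underline{V}})$ becomes an action on skein classes in the top handlebody $M'_g$; one then identifies each resulting skein with the bichrome defining skein of the intended algebraic operator. Concretely, by the last paragraph of Section \ref{sss:skein-alg-summary} together with \eqref{eq:sigma'-def}, computing $\repCpr(f)(x')$ means finding $y'\in\rmX'_{g,\underline{V}}$ such that
\[
\bigl[M'_g,\Psi'(y'),0\bigr]\ \propto\ \bigl[M'_g\cup_{\Sigma_g}\bbM_{f^{-1}},\ \Psi'(x')\cup T_{f^{-1}},\ 0\bigr],
\]
where $(\bbM_{f^{-1}},T_{f^{-1}})$ is either the mapping cylinder of $f^{-1}$ (braid case), or a curve cylinder carrying a red framed knot (Dehn-twist case, via Lemma \ref{L:mapping_and_curve_cylinder}); the use of $f^{-1}$ reflects that, projectively, $\prepT'$ acts as precomposition with $\bbSigma\times\bbI_{f^{-1}}$.

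For the braid-type generators $x_{i,j}, w_{i,j}, v_i$, the cobordism $\Sigma_g\times I_{f^{-1}}$ glues onto $M'_g$ to reproduce $M'_g$ with the colored blue braid realizing $f^{-1}$ inserted just below $\Psi'(x')$. By definition of $\Psi'$, the Reshetikhin-Turaev assignment on braids, and the identity $F_\calC(f^{-1})=F_\calC(f)^{-1}$, the resulting skein equals $\Psi'\!\bigl(x'\circ(F_\calC(f)^{-1}\otimes\id_{\coend^{\otimes g}})\bigr)$; the signature defect vanishes because these mapping cylinders preserve the Lagrangian. This yields the first three formulas exactly.

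For the Dehn-twist generators, Lemma \ref{L:mapping_and_curve_cylinder} together with \eqref{E:coeff_between_mapping_and_curve_cylinder} replaces each mapping cylinder by a curve cylinder, up to a scalar $\Delta_\mp^{-1}=\calD^{-1}\delta^{\pm 1}$ when $[\gamma]\in\lambda_g$ (Remark \ref{R:framing_anomaly}) or a pure integer power of $\delta$ otherwise. With the standard Lagrangian, only $\beta_j$ is Lagrangian among the generating curves, which is why $\calD^{-1}$ appears exclusively in $\repCpr(S_j)$. After gluing onto $M'_g$, each red framed curve occupies a canonical position in the Heegaard handlebody, and the identification with the bichrome defining skeins of Figure \ref{fig:bichrome-preliminary-defs} proceeds curve by curve: the framed meridian of the $j$-th handle arising for $T_j=\tau_{\gamma_j}^{-1}$ reproduces \eqref{E:T}, giving $\calT$ on the $j$-th $\coend$ factor; the $\alpha_k$-curve reproduces \eqref{E:H}, giving $\calH$ on the $(k-1)$-st and $k$-th $\coend$ factors; the $\delta_{i,j}$-curve reproduces \eqref{E:H_L}, giving $\calH_{\rmL,V_i\otimes\cdots\otimes V_m\otimes\coend^{\otimes j-1}}$. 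For $S_j=\tau_{\gamma_j}\tau_{\beta_j}\tau_{\gamma_j}$, one either composes three curve cylinders and simplifies the resulting bichrome graph using \eqref{E:monodromy} and the Hopf-algebra axioms on $\coend$, or exploits that $S_j$ topologically exchanges meridian and longitude on the $j$-th handle; both routes produce the bichrome defining skein \eqref{E:S} of $\calS$.

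The residual prefactors $\delta^{a_k}, \delta^{b_j}, \delta^{c_j}, \delta^{d_{i,j}}$ absorb the signature-defect Maslov contributions from the composition rule \eqref{eq:compose-Cob} together with the leftover $\delta$-powers from \eqref{E:coeff_between_mapping_and_curve_cylinder}; since the proposition asserts only their existence, no explicit Maslov index needs to be computed. The main obstacle is the geometric identification step: for each generating curve of Figure \ref{fig:curves-for-generators}, one must draw the red framed knot inside the Heegaard handlebody obtained after gluing, and produce an ambient isotopy bringing it into the precise $n$-bottom-graph configuration of Figure \ref{fig:bichrome-preliminary-defs}, absorbing where necessary a blue braiding that reorders boundary vertices into their canonical order.
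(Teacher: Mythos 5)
Your overall strategy coincides with the paper's: transport the action through $\Phi'$ to skeins in the top handlebody, replace mapping cylinders of Dehn twists by curve cylinders via Lemma~\ref{L:mapping_and_curve_cylinder}, and match the resulting red framed knots with the bichrome presentations of $\calT$, $\calH$, $\calH_{\rmL}$ and $\calS$; for $S_j$ the paper likewise composes three curve cylinders and simplifies the resulting graph (using a non-local skein equivalence based on the edge slide property). The treatment of the braid generators and the decision to leave the $\delta$-exponents unspecified also match.

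There is, however, a genuine error in your bookkeeping of the $\calD$-factors, and this matters because the powers of $\calD$ \emph{are} part of the statement (only the $\delta$-exponents are left free). You assert that replacing a mapping cylinder by a curve cylinder costs $\Delta_\mp^{-1}=\calD^{-1}\delta^{\pm1}$ when $[\gamma]\in\lambda_g$ ``or a pure integer power of $\delta$ otherwise'', and you then attribute the lone $\calD^{-1}$ in the $S_j$-formula to $\beta_j$ being the only generating curve lying in the Lagrangian. Both claims are incorrect: equation~\eqref{E:coeff_between_mapping_and_curve_cylinder} produces the factor $\calD^{-1}\delta^{m_\mp}$ for \emph{every} simple closed curve --- membership of $[\gamma]$ in $\lambda_g$ only pins down $m_\mp=\pm1$ (Remark~\ref{R:framing_anomaly}) and hence affects only the $\delta$-exponent, never the $\calD$-power. (Moreover, for the Lagrangian $\ker(i_g)_*$ of the top handlebody it is $\alpha_k,\gamma_j,\delta_{i,j}$ that lie in $\lambda_g$, not $\beta_j$.) The correct accounting, which is the one the paper uses, is a counting argument: each Dehn twist contributes $\calD^{-1}$ via \eqref{E:coeff_between_mapping_and_curve_cylinder}, while each red unknot removed by the subsequent skein equivalence contributes $\Delta_\pm=\calD\delta^{\pm1}$. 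For $H_k$, $T_j$ and $H_{i,j}$ one twist is matched by one removed red circle, so the $\calD$'s cancel; for $S_j$ there are three twists but the skein equivalence removes only two red circles, the third surviving as the red component in the bichrome presentation of $\calS$ (which encodes $\counitL$ and $\intL$ as a red cap and cup), leaving the net $\calD^{-1}$. Without this argument your proof does not establish the stated normalizations.
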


\begin{proof}
 The action of generators $x_{i,j}$, $w_{i,j}$, and $v_i$ follows directly from the definitions. Remark that no coefficient appears here, because all these mapping classes restrict to the identity outside of a disc containing $P_{\underline{V}}$, and therefore they all induce the identity endomorphism of $H_1(\Sigma_g;\R)$. 
 
 \begin{figure}[t]
  \includegraphics{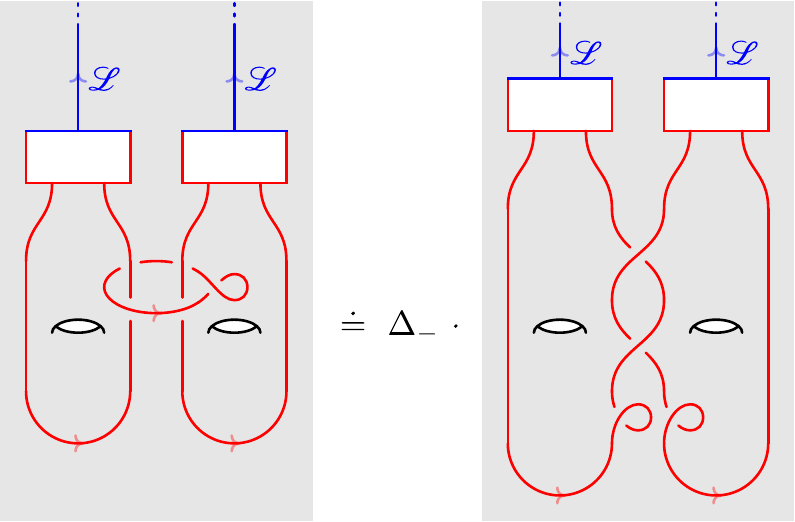}
  \caption{Skein equivalence for the action of $H_k$.}
  \label{F:equivalence_H_k}
 \end{figure}
 
 For what concerns $H_k$, we have the skein equivalence represented in Figure~\ref{F:equivalence_H_k}.
 Here, the left hand side represents the vector $\bbSigma \times \bbI_{(\alpha_k)_-} \circ \Phi'(x')$, which is proportional to $\Phi'(\repCpr(H_k)(x'))$ thanks to Lemma \ref{L:mapping_and_curve_cylinder}. Remark that the inverse in $H_k = \tau_{\alpha_k}^{-1}$ cancels with the one in the definition \eqref{eq:sigma'-def} of $\repT'$. The form of the proportionality coefficient follows from equation \eqref{E:coeff_between_mapping_and_curve_cylinder}, and from the fact that $\Delta_\pm = \calD \delta^{\pm 1}$, see~\eqref{eq:delta-def}. 

 Similarly, to compute the action of $S_j$, we recall its definition as a composition of three Dehn twists, and we use the skein equivalences of Figure~\ref{F:equivalence_S_j}.
 We point out that the second ``$\dot =$'' there
is not actually a skein equivalence as defined in \cite[Sec.~4.2]{DGGPR19}, but rather a non-local version of the notion that takes place inside a solid torus, and that uses the edge slide property \cite[Prop.~3.7, Fig.~26]{DGGPR19}. Remark also that because of the absence of inverses in the definition of $S_j$ the red knots on the left hand side are $+1$-framed. The residual factor of $\calD^{-1}$ in the proportionality coefficient follows from the fact that the number of Dehn twists required by $S_j$ is three, while the number of red knots removed by the first skein equivalence is only two.

 \begin{figure}[t]
  \includegraphics{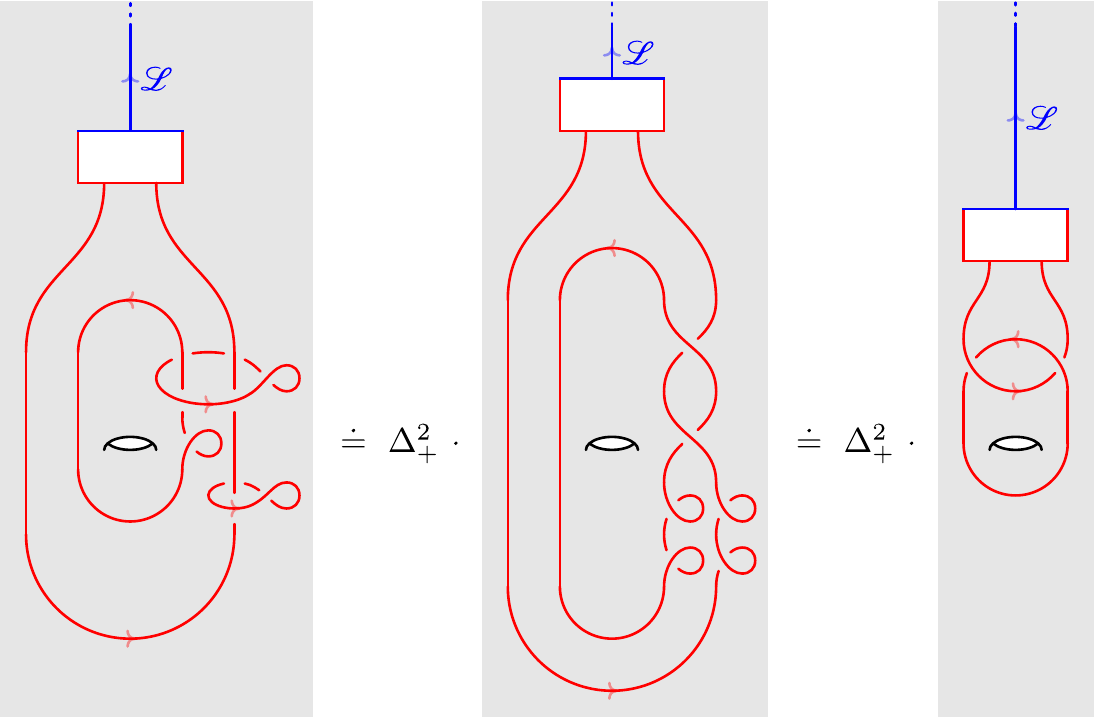}
  \caption{Skein equivalence for the action of $S_j$.}
  \label{F:equivalence_S_j}
 \end{figure}
 
For $T_j$, we consider the skein equivalence of Figure~\ref{F:equivalence_T_j}, and for $H_{i,j}$ the one of Figure~\ref{F:equivalence_H_ij}. \qedhere

 \begin{figure}[t]
  \includegraphics{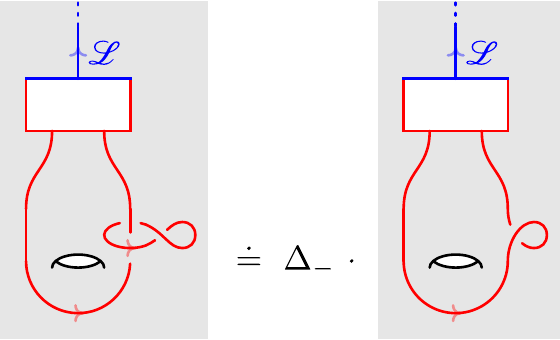}
  \caption{Skein equivalence for the action of $T_j$.}
  \label{F:equivalence_T_j}
 \end{figure}
 
 \begin{figure}[t]
  \includegraphics{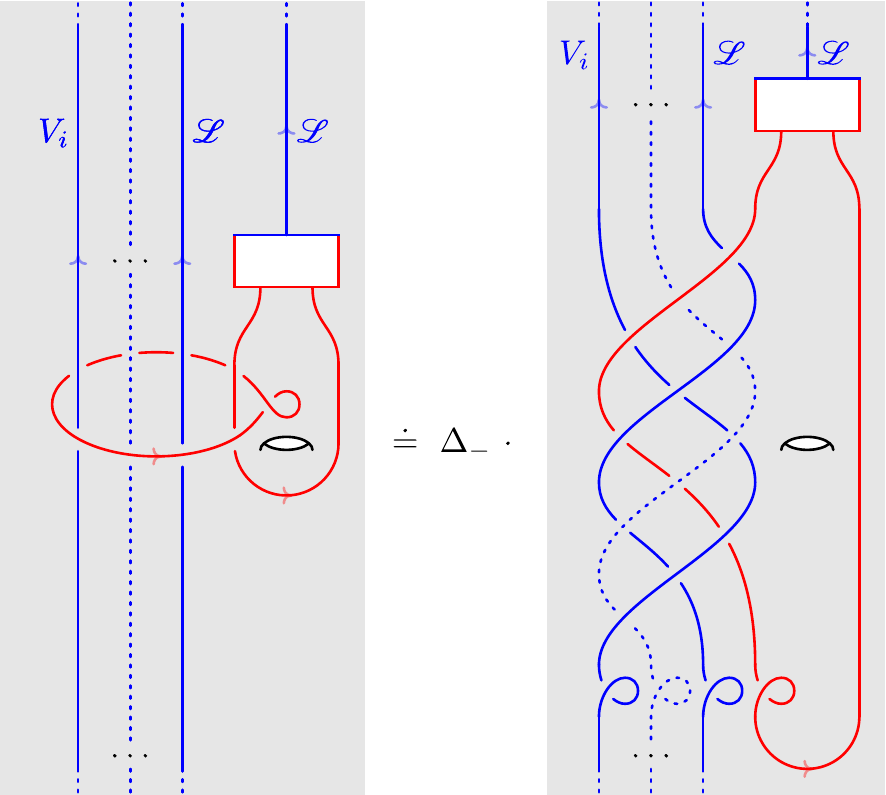}
  \caption{Skein equivalence for the action of $H_{i,j}$.}
  \label{F:equivalence_H_ij}
 \end{figure}
 
\end{proof}

\FloatBarrier

\begin{remark}
 The integers $a_k,b_j,c_j,d_{i,j}$ in the statement of Proposition \ref{P:equivalence} arise from the composition rule \eqref{eq:compose-Cob} in $\adCob_\calC$, and thus depend on the choice of the Lagrangian subspace $\lambda_g \subset H_1(\Sigma_g;\R)$ of $\bbSigma_{g,\underline{V}}$. They do not affect the projectivization $\prepCpr$, so we will not compute them here. 
 Remark however that if $M'_g$ denotes the cobordism from $\Sigma_g$ to $\varnothing$ corresponding to the top handlebody in Figure \ref{F:algebraic_model}, and if $\lambda_g = \ker (i_g)_*$ for the embedding $i_g : \Sigma_g \hookrightarrow M'_g$ induced by the structure map of $M'_g$, then $a_k = c_j = d_{i,j} = 0$. This follows immediately from Remark \ref{R:framing_anomaly}, and from the proof of Proposition~\ref{P:equivalence}.
\end{remark}

\section{Equivalence with Lyubashenko's projective representations}\label{S:equivalence}

In this section we show that the projective representation $\prepCpr$ defined via TQFT in \eqref{eq:barsigma'-def} is equivalent to the representation given in Lyubashenko's original work \cite{L94}. 
Lyubashenko considers the vector spaces
\[
 \rmL_{g,\underline{V}} := \calC(V_1 \otimes \ldots \otimes V_m,\coend^{\otimes g})
\]
and defines a group homomorphism
\[
 \prepL : \MCG(\Sigma_g,P_{\underline{V}}) \to \PGL(\rmL_{g,\underline{V}}).
\]
After briefly recalling the construction of $\prepL$, we will give an explicit isomorphism $\rmX'_{g,\underline{V}} \to \rmL_{g,\underline{V}}$ and show that it intertwines the two projective actions. This is our second main result.

\subsection{Lyubashenko's representations}\label{SS:Lyubashenko}

For all objects $V,X,Y \in \calC$ we consider the isomorphisms
\begin{align*}
 \eviso{V} : \calC(X,Y \otimes V^*) & \to \calC(X \otimes V,Y) \\*
 f & \mapsto (\id_Y \otimes \lev_V) \circ (f \otimes \id_V) \\*
 \coeviso{V} : \calC(X \otimes V,Y) & \to \calC(X,Y \otimes V^*) \\*
 f & \mapsto (f \otimes \id_{V^*}) \circ (\id_X \otimes \lcoev_V)
\end{align*}
induced by the pivotal structure of $\calC$. Lyubashenko's representation is defined in terms of generators, and we start by recalling from \cite[Sec.~4]{L94} the linear map $\repL(f)$ assigned to each generator $f$ of $\MCG(\Sigma_g,P_{\underline{V}})$ as given in Section \ref{SS:algebraic_translation}. For every $\ell \in \rmL_{g,\underline{V}}$,
\begin{align}
  \repL(x_{i,j})(\ell) &:= \ell \circ F_\calC(x_{i,j})^{-1} \nonumber \\*
  \repL(w_{i,j})(\ell) &:= \ell \circ F_\calC(w_{i,j})^{-1} \nonumber \\*
  \repL(v_i)(\ell) &:= \ell \circ F_\calC(v_i)^{-1} \nonumber \\*
  \repL(H_k)(\ell) &:= (\id_{\coend^{\otimes g-i}} \otimes \calH \otimes \id_{\coend^{\otimes k-2}}) \circ \ell \label{eq:Lyu-generators} \\*
  \repL(S_j)(\ell) &:= (\id_{\coend^{\otimes g-j}} \otimes \calS \otimes \id_{\coend^{\otimes j-1}}) \circ \ell \nonumber \\*
  \repL(T_j)(\ell) &:= (\id_{\coend^{\otimes g-j}} \otimes \calT \otimes \id_{\coend^{\otimes j-1}}) \circ \ell \nonumber \\*
  \repL(H_{i,j})(\ell) &:= \eviso{V_i \otimes \ldots \otimes V_m} \left( (\id_{\coend^{\otimes g-j}} \otimes \calH_{\rmR,\coend^{\otimes j-1} \otimes V_m^* \otimes \ldots \otimes V_i^*}) \circ \coeviso{V_i \otimes \ldots \otimes V_m} (\ell) \right) \nonumber
\end{align}
Remark that Lyubashenko actually considers inverse braiding and inverse twist morphisms, which is why our formulas for the first three kinds of generators are inverse with respect to those in \cite[Sec.~4]{L94}. We also point out that Lyubashenko works with a bigger group, since he allows mapping classes which only preserve $P_{\underline{V}}$ as a set, but not as a $\calC$-colored one. Consequently, he considers more general braiding morphisms, which allow him to list just two kinds of framed braid generators. Since we only consider mapping classes which preserve $\calC$-colorings, we have restricted Lyubashenko's representation accordingly. In terms of these linear maps, it is shown in \cite{L94,L96} by verifying the relevant relations that:

\begin{theorem}
There is a unique homomorphism $\prepL : \MCG(\Sigma_g,P_{\underline{V}}) \to \PGL(\rmL_{g,\underline{V}})$ which satisfies $\prepL(f) = [\repL(f)]$, where $f$ runs over the set of generators in \eqref{eq:Lyu-generators}.
\end{theorem}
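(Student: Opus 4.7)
The plan is to split the statement into uniqueness and existence. Uniqueness is automatic, since by \cite[Sec.~4.5]{L94} the listed elements generate $\MCG(\Sigma_g, P_{\underline V})$, so any two homomorphisms agreeing on them must coincide.

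For existence the direct route, taken in \cite{L94,L96}, is to fix a finite presentation of $\MCG(\Sigma_g,P_{\underline V})$ and verify, relation by relation, that the corresponding product of the $[\repL(f)]$ is trivial in $\PGL(\rmL_{g,\underline V})$. Each such check unpacks, via \eqref{E:monodromy}--\eqref{E:T} and \eqref{E:H}--\eqref{E:H_R}, into a diagrammatic identity in $\calC$ that follows from the ribbon axioms together with the braided Hopf algebra structure of $\coend$ recalled in Section~\ref{sec:def_coend}. The model cases are: the braid-type relations among $x_{i,j}$, $w_{i,j}$, $v_i$, which come from naturality and the ribbon axioms; the genus-one $SL(2,\Z)$-type relations among $\calS$ and $\calT$, which follow from the defining properties of the integral $\intL$ and the antipode of $\coend$; and the chain- and lantern-type relations intertwining the $H_k$, $S_j$, $T_j$, and $H_{i,j}$, which reduce to compatibilities between the partial monodromies $\Omega_{\rmL,X}$, $\Omega_{\rmR,Y}$ and the dinaturality of the structure maps $i_X$. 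The main obstacle on this route is the sheer bookkeeping: the relation list grows rapidly with the genus $g$ and the number of marked points, and each verification is a delicate diagrammatic manipulation.

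The alternative I would actually pursue in the present setting is to bypass the relation check using the TQFT. Indeed, $\repCpr$ in \eqref{eq:rho'-def} is transported from $\rmV'_\calC$ along the isomorphism $\Phi'$ of \eqref{eq:Phi'-def}, so its projectivization $\prepCpr$ is automatically a group homomorphism by functoriality of $\rmV'_\calC$. Proposition~\ref{P:equivalence} computes $\repCpr(f)$ on each generator $f$ of Section~\ref{SS:algebraic_translation} in formulas that match \eqref{eq:Lyu-generators} up to overall scalars that vanish in $\PGL$. Granting the $\calC$-linear isomorphism $\rmX'_{g,\underline V}\to\rmL_{g,\underline V}$ constructed in the forthcoming Theorem~\ref{T:main-equiv}, which intertwines $\prepCpr$ with $[\repL(\,\cdot\,)]$ on generators, the existence of the homomorphism $\prepL$ follows at once, and combined with uniqueness this proves the theorem without any direct relation-by-relation verification.
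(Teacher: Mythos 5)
Your proposal is correct, but note that the paper itself gives no proof of this theorem: it is quoted from \cite{L94,L96}, where it is established by the first route you describe, namely by fixing a presentation of $\MCG(\Sigma_g,P_{\underline{V}})$ and verifying the relations one by one. Your preferred alternative --- deducing existence from the functoriality of $\rmV'_\calC$, Proposition~\ref{P:equivalence}, and the intertwining identities underlying Theorem~\ref{T:main-equiv} --- is genuinely different, and it is precisely the independent proof that the paper advertises in the remark following Theorem~\ref{T:main-intro}. Two comments on making it airtight. First, beware of circularity: the statement of Theorem~\ref{T:main-equiv} presupposes that $\prepL$ is already a well-defined homomorphism, so you cannot invoke the theorem as stated; what you may use is only the content of its proof, namely the generator-level identities $\phi\circ\repCpr(f)\propto\repL(f)\circ\phi$, which are established via Lemma~\ref{L:Radford_prop} without assuming anything about $\prepL$. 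You then \emph{define} $\prepL:=\phi^{\mathrm{ad}}\circ\prepCpr$, which is a homomorphism because $\prepCpr$ is, and observe that it sends each generator $f$ to $[\repL(f)]$. Second, the trade-off: the direct route of \cite{L94,L96} is self-contained but requires heavy relation-by-relation bookkeeping, while the TQFT route obtains the homomorphism property for free from the composition law in $\adCob_\calC$, at the price of importing the full machinery of \cite{DGGPR19} (admissible cobordisms, modified traces, the universal construction, and the skein-theoretic identification of state spaces needed for Proposition~\ref{P:equivalence}).
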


\begin{remark}
 Since Lyubashenko describes the projective representation $\prepL$ in terms of generators $f$ of $\MCG(\Sigma_g,P_{\underline{V}})$, he only defines the corresponding linear endomorphisms $\repL(f) \in \GL(\rmL_{g,\underline{V}})$, and one does not obtain a specific choice for an extension to a complete lift $\repL : \MCG(\Sigma_g,P_{\underline{V}}) \to \GL(\rmL_{g,\underline{V}})$ of $\prepL$. Indeed, different realizations of an arbitrary element of $\MCG(\Sigma_g,P_{\underline{V}})$ as a combination of generators would give linear maps that differ by scalars. However, in Section~\ref{SS:algebraic_translation} we saw that the TQFT approach naturally provides (and even starts from) such lifts, and that different choices of Lagrangian subspaces lead to different lifts.
\end{remark}

\subsection{Radford copairing}\label{SS:copairing}

In order to give the isomorphism between $\rmX'_{g,\underline{V}}$ and $\rmL_{g,\underline{V}}$ we will use the \textit{Radford copairing} $\Rad : \one \to \coend \otimes \coend$, which is defined as
\begin{equation}\label{E:Radford_copairing}
 \Rad := \copL \circ \intL.
\end{equation}
As we have seen in Section~\ref{sss:LRT-functor}, in terms of bichrome graphs the definition of the Radford copairing amounts to the skein equivalence
\begin{equation}\label{eq:R-bichrome}
 \pic{Radford_copairing}
\end{equation}
We use the shorthand notation $\Rad^{(n)} : \one \to \coend^{\otimes n} \otimes \coend^{\otimes n}$ for the morphism of $\calC$ defined inductively by
\[
 \Rad^{(0)} := \id_{\one}, \qquad \Rad^{(n)} := (\id_\coend \otimes \Rad^{(n-1)} \otimes \id_\coend) \circ \Rad.
\]
Similarly, for all $X,Y \in \calC$ and every $f \in \calC(\one,X \otimes Y)$, we use the abbreviation $\Rad^{(n)}_f : \one \to X \otimes \coend^{\otimes n} \otimes \coend^{\otimes n} \otimes Y$ for the morphism 
\[
 \Rad^{(n)}_f := (\id_X \otimes \Rad^{(n)} \otimes \id_Y) \circ f.
\]
The following properties of the Radford copairing will be needed to prove the equivalence of projective representations.

\begin{lemma}\label{L:Radford_prop}
The Radford copairing is non-degenerate and satisfies
 \begin{align}
  (\Omega \otimes \id_{\coend^{\otimes 2}}) \circ \Rad^{(2)} &= 
  (\id_{\coend^{\otimes 2}} \otimes \Omega) \circ \Rad^{(2)} \label{E:Radford_prop_monodromy} \\*
  (\Omega_{\rmL,X} \otimes \id_{\coend \otimes Y}) \circ \Rad_f &= 
  (\id_{X \otimes \coend} \otimes \Omega_{\rmR,Y}) \circ \Rad_f \label{E:Radford_prop_partial_monodromy} \\*
  (\calS \otimes \id_\coend) \circ \Rad &= 
  (\id_\coend \otimes \calS) \circ \Rad \label{E:Radford_prop_S} \\*
  (\calT \otimes \id_\coend) \circ \Rad &= 
  (\id_\coend \otimes \calT) \circ \Rad \label{E:Radford_prop_T} 
 \end{align}
 for all $X,Y \in \calC$ and every $f \in \calC(\one,X \otimes Y)$.
\end{lemma}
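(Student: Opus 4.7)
My plan is to treat all statements via the bichrome graph skein calculus of Section~\ref{sss:LRT-functor}, in which $\Rad = \copL \circ \intL$ is represented as a single red arc---obtained from a red cap labeled by $\intL$ by applying the splitting bichrome coupon that encodes $\copL$, compare \eqref{eq:R-bichrome}. In this picture, the two $\coend$ outputs of $\Rad$ are the two endpoints of one and the same red arc, so any local operation performed on one arm can be slid around the arc to the other arm via a combination of ambient isotopy and the edge slide property \cite[Prop.~3.7]{DGGPR19}.

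For non-degeneracy, I would identify $\Rad$, up to a non-zero scalar, with the copairing dual to the Hopf pairing $\pairL \colon \coend \otimes \coend \to \one$ of \cite[Thm.~3.7]{L95}. Concretely, using the two-sided integral axioms and the antipode-compatibility of $\pairL$, one verifies that
\[
  (\pairL \otimes \id_\coend) \circ (\id_\coend \otimes \Rad) = \zeta \cdot \id_\coend
\]
for some $\zeta \in \Bbbk^\times$, together with the analogous identity on the other side. Since $\calC$ is modular, the Drinfeld map $D$ is invertible; hence $\pairL$ is non-degenerate, and so $\Rad$ is non-degenerate too.

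For the four identities, I would translate each side into a bichrome skein equivalence and observe that the two sides are related by an isotopy that slides a local feature around the Radford arc. In \eqref{E:Radford_prop_T}, the twist $\calT$ inserts a framing change on a red strand, which can be moved freely along the arc. In \eqref{E:Radford_prop_S}, $\calS$ is represented by a red meridian carrying $\intL$ that links one $\coend$ strand, and sliding this meridian around the Radford arc exchanges the two sides. In \eqref{E:Radford_prop_monodromy}, $\Rad^{(2)}$ consists of two nested red arcs and $\Omega$ is a double braiding between outer and inner strands; the two sides coincide by sliding this double braiding from one pair of endpoints to the other. Finally, in \eqref{E:Radford_prop_partial_monodromy}, the blue $X$- and $Y$-strands come from a single $f \colon \one \to X \otimes Y$ so together they form a blue arc, and both sides equal the double braiding of this blue arc with the Radford arc, placed on either of its two endpoints.

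The main obstacle will be to make these slide arguments rigorous in the general modular setting. The key technical tools are: (i) the defining equations \eqref{E:monodromy}--\eqref{E:T} of $\Omega$, $\Omega_{\rmL,X}$, $\Omega_{\rmR,Y}$, $\calS$, and $\calT$ through their compositions with the structure morphisms $i_X$, together with the universal property of the coend, which allow one to work with concrete dinatural transformations rather than directly with morphisms out of $\coend$; and (ii) the edge slide property \cite[Prop.~3.7]{DGGPR19}, which justifies moving red strands past arbitrary blue decorations. Combined, these should reduce each identity to an isotopy of a bichrome $n$-bottom graph that rotates a local feature around the Radford arc, while the compatibility of $\Rad^{(2)}$ and $\Rad_f$ with $\Rad$ is built into their inductive definitions.
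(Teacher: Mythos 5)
Your proposal follows essentially the same route as the paper: each of the four identities is verified by expressing both sides as bichrome graphs via the skein presentations of $\Omega$, $\Omega_{\rmL,X}$, $\Omega_{\rmR,Y}$, $\calS$, $\calT$ and of $\Rad$, and then observing that the resulting graphs are related by an isotopy that slides the relevant local feature around the single red arc representing $\Rad$ (with dinaturality of the bichrome coupon entering in the $\calT$ case, exactly as in the paper). The only divergence is the non-degeneracy claim, where the paper simply cites \cite[Cor.~4.2.13]{KL01} while you sketch a direct verification against the Hopf pairing $\pairL$; that alternative is standard and would work, but you would still need to actually establish the scalar identity $(\pairL \otimes \id_\coend) \circ (\id_\coend \otimes \Rad) = \zeta \cdot \id_\coend$ rather than assert it.
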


\begin{proof}
Non-degeneracy follows from \cite[Cor.~4.2.13]{KL01}. 

Next consider Equation~\eqref{E:Radford_prop_monodromy}.
Using the skein equivalence for $\Omega$ of Figure~\ref{fig:bichrome-preliminary-defs} and the one for $\Rad$ of \eqref{eq:R-bichrome}, the two sides of the identity in \eqref{E:Radford_prop_monodromy} are skein equivalent to the two sides of
 \[
  \pic{Radford_prop_monodromy}
 \]
which are equal to each other, as they are related by an isotopy. All other identities are shown in the same way. We only state the relevant skein equivalences and isotopies.
\begin{itemize}
    \item 
  Equation \eqref{E:Radford_prop_partial_monodromy}:
\[
  \pic{Radford_prop_partial_monodromy}
 \]
\item 
 Equation \eqref{E:Radford_prop_S}:
 \[
  \pic{Radford_prop_S}
 \]
\item
 Equation \eqref{E:Radford_prop_T}:
 \[
  \pic{Radford_prop_T} 
 \]
In the last step we used dinaturality of the bichrome coupon. \qedhere
\end{itemize}
\end{proof}

\subsection{Equivalence of representations}\label{SS:main_result}

Consider the linear map
\begin{align*}
 \phi : \rmX'_{g,\underline{V}} & \to \rmL_{g,\underline{V}} \\*
 x' & \mapsto (x' \otimes \id_{\coend^{\otimes g}})
 \circ (\id_{V_1 \otimes \ldots \otimes V_m} \otimes \Rad^{(g)}).
\end{align*}
Since the copairing is non-degenerate (see Lemma~\ref{L:Radford_prop}), the map $\phi$ is invertible. Let us write $\phi^{\mathrm{ad}}$ for the induced group isomorphism
\begin{align*}
 \phi^{\mathrm{ad}} : \PGL(\rmX'_{g,\underline{V}}) & \to \PGL(\rmL_{g,\underline{V}}) \\*
 {}[f] & \mapsto [\phi \circ f \circ \phi^{-1}].
\end{align*}

\begin{theorem}\label{T:main-equiv}
The isomorphism $\phi$ intertwines the projective representations $\prepCpr$ and $\prepL$, that is, $\phi^{\mathrm{ad}}$ fits into the commutative diagram
 \begin{center}
  \begin{tikzpicture}[descr/.style={fill=white}]
  \node (P1) at (0:0) {$\MCG(\Sigma_g,P_{\underline{V}})$};
  \node (P2) at (15:3) {$\PGL(\rmX'_{g,\underline{V}})$};
  \node (P3) at (-15:3) {$\PGL(\rmL_{g,\underline{V}})$};
  \draw
  (P1) edge[->] node[above] {\scriptsize $\prepCpr$} (P2)
  (P1) edge[->] node[below] {\scriptsize $\prepL$} (P3)
  (P2) edge[->] node[right] {\scriptsize $\phi^{\mathrm{ad}}$} (P3);
 \end{tikzpicture}
\end{center}
\end{theorem}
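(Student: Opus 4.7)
The plan is to reduce the verification to a check on the generators of $\MCG(\Sigma_g,P_{\underline{V}})$ listed in Section~\ref{SS:algebraic_translation}, namely the braid-type generators $x_{i,j}$, $w_{i,j}$, $v_i$ and the Dehn twist generators $H_k$, $S_j$, $T_j$, $H_{i,j}$. Since $\prepCpr$ and $\prepL$ are group homomorphisms and $\phi^{\mathrm{ad}}$ is a group isomorphism, it suffices to prove $\phi^{\mathrm{ad}}(\prepCpr(f)) = \prepL(f)$ for each generator $f$, where on the left one uses the explicit formulas from Proposition~\ref{P:equivalence} and on the right the formulas in~\eqref{eq:Lyu-generators}. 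All scalar prefactors of the form $\delta^{a_k}$, $\calD^{-1}\delta^{b_j}$, $\delta^{c_j}$, $\delta^{d_{i,j}}$ appearing in $\repCpr$ may be discarded, since the equality is only claimed at the level of projective transformations.

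For the braid-type generators the identification is immediate: $x_{i,j}$, $w_{i,j}$, $v_i$ only pre-compose $x'$ with an automorphism of $V_1 \otimes \ldots \otimes V_m$ and do not interact with the coend factors produced by $\Rad^{(g)}$ in the definition of $\phi(x')$, so the identity $\phi \circ \repCpr(f) = \repL(f) \circ \phi$ holds tautologically from functoriality of $\otimes$ and $\circ$. The substance of the proof lies in the four Dehn twist cases. The strategy in each is to transport each coend endomorphism from the ``input to $x'$'' side, where it occurs in $\repCpr(f)$, to the ``output of $\phi(x')$'' side, where it occurs in $\repL(f)$, by sliding it across the appropriate Radford copairing inside $\Rad^{(g)}$. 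The nested recursion $\Rad^{(g)} = (\id_\coend \otimes \Rad^{(g-1)} \otimes \id_\coend) \circ \Rad$ produces two strings of coends in which the $j$-th coend fed into $x'$ is paired by a single copy of $\Rad$ with the $(g-j+1)$-th output coend of $\phi(x')$, which neatly accounts for the reversal of indices between the formulas in Proposition~\ref{P:equivalence} and~\eqref{eq:Lyu-generators}.

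With this setup, $T_j$ is handled by a single application of~\eqref{E:Radford_prop_T}, and $S_j$ by a single application of~\eqref{E:Radford_prop_S}. For $H_k$ one decomposes $\calH = \Omega \circ (\calT \otimes \calT)$ and uses~\eqref{E:Radford_prop_T} on each copy of $\calT$ followed by~\eqref{E:Radford_prop_monodromy} on the residual $\Omega$, which slides across the outermost $\Rad$-pair of the relevant two-fold Radford sub-block (linking input coend positions $k-1, k$ with output coend positions $g-k+1, g-k+2$). For $H_{i,j}$ one applies~\eqref{E:Radford_prop_partial_monodromy} with $X = V_i \otimes \ldots \otimes V_m \otimes \coend^{\otimes j-1}$ and $Y = \coend^{\otimes j-1} \otimes V_m^* \otimes \ldots \otimes V_i^*$, combined once more with~\eqref{E:Radford_prop_T} and with naturality of the twist, to match the left partial monodromy $\Omega_{\rmL,X}$ arising on the TQFT side with the right partial monodromy $\Omega_{\rmR,Y}$ arising on the Lyubashenko side, after conjugation by the pivotal isomorphisms $\coeviso{V_i \otimes \ldots \otimes V_m}$ and $\eviso{V_i \otimes \ldots \otimes V_m}$.

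The main obstacle I anticipate is the bookkeeping for $H_{i,j}$, the only generator that mixes marked-point labels with coends: the precise correspondence between the left partial monodromy on the $x'$ side and the right partial monodromy on the Lyubashenko side becomes transparent only after rewriting both sides as bichrome diagrams via~\eqref{eq:R-bichrome} and performing a planar isotopy around the caps produced by $\coeviso{V_i \otimes \ldots \otimes V_m}$. I would therefore present that case diagrammatically, in the spirit of the graphical arguments used in the proof of Lemma~\ref{L:Radford_prop}, which avoids an unilluminating composition of pivotal and duality morphisms.
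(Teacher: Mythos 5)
Your proposal matches the paper's proof essentially step for step: reduction to the generators, discarding the projective scalars, the immediate case of the braid-type generators, and the use of the four identities of Lemma~\ref{L:Radford_prop} (with exactly the same choice of $X$, $Y$ and the auxiliary morphism for $H_{i,j}$, plus naturality of the twist) to slide $\calT$, $\calS$, $\Omega$ and the partial monodromies across the Radford copairing. The only difference is presentational — you propose to draw the $H_{i,j}$ case as a bichrome diagram, whereas the paper writes it out algebraically — so this is the same argument.
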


\begin{proof}
 It is enough to show that for each generator $f$ of $\MCG(\Sigma_g,P_{\underline{V}})$ used in Proposition~\ref{P:equivalence} and \eqref{eq:Lyu-generators} we have
\[
    \phi \circ \repCpr(f) \propto \repL(f) \circ \phi.
\]
For the actions of generators $x_{i,j}$, $w_{i,j}$, $v_i$, this is immediate. For the others, we use Lemma \ref{L:Radford_prop}. Let $x' \in \rmX'_{g,\underline{V}}$ be arbitrary. Using  \eqref{E:Radford_prop_monodromy} and \eqref{E:Radford_prop_T}, we see
 \begin{align*}
  \phi \left( \repCpr(H_k)(x') \right)
  & \propto
 \phi \left( x' \circ (\id_{V_1 \otimes \ldots \otimes V_m \otimes \coend^{\otimes k-2}} \otimes \calH \otimes \id_{\coend^{\otimes g-k}}) \right) \\*
  & = (\id_{\coend^{\otimes g-i}} \otimes \calH \otimes \id_{\coend^{\otimes k-2}}) \circ \phi(x') \\*
  & = \repL(H_k) 
  \left( \phi(x') \right).
 \end{align*}
The computation for generators $S_j$ and $T_j$ is analogous:
 \begin{align*}
 \phi \left( \repCpr(S_j)(x') \right)
  & \propto
   \phi \left( x' \circ (\id_{V_1 \otimes \ldots \otimes V_m \otimes \coend^{\otimes j-1}} \otimes \calS \otimes \id_{\coend^{\otimes g-j}}) \right) \\*
  & \overset{\eqref{E:Radford_prop_S}}= (\id_{\coend^{\otimes g-j}} \otimes \calS \otimes \id_{\coend^{\otimes j-1}}) \circ \phi(x') \\*
  & \overset{\phantom{\eqref{E:Radford_prop_S}}} = \repL(S_j) \left( \phi(x') \right)
\\
\phi \left( \repCpr(T_j)(x') \right) 
  & \propto
  \phi \left( x' \circ (\id_{V_1 \otimes \ldots \otimes V_m \otimes \coend^{\otimes j-1}} \otimes \calT \otimes \id_{\coend^{\otimes g-j}}) \right) \\*
  & \overset{\eqref{E:Radford_prop_T}}= (\id_{\coend^{\otimes g-j}} \otimes \calT \otimes \id_{\coend^{\otimes j-1}}) \circ \phi(x') \\*
  &\overset{\phantom{\eqref{E:Radford_prop_T}}} = \repL(T_j) \left( \phi(x') \right)
 \end{align*}
Finally, for the generator $H_{i,j}$ we use \eqref{E:Radford_prop_T} as well as \eqref{E:Radford_prop_partial_monodromy}  with
 \[
  X = V_i \otimes \ldots \otimes V_m \otimes \coend^{\otimes j-1}, \quad
  Y = \coend^{\otimes j-1} \otimes V_m^* \otimes \ldots \otimes V_i^*, \quad
  f = \Rad^{(j-1)}_{\stackrel{\longleftarrow}{\mathrm{coev}}_{V_i \otimes \ldots \otimes V_m}}.
 \]
Then 
 \begin{align*}
  &\phi \left( \repCpr(H_{i,j})(x') \right) \\*
  &\hspace*{\parindent} \propto \phi \left( x' \circ \left( \id_{V_1 \otimes \ldots \otimes V_{i-1}} \otimes \calH_{\rmL,V_i \otimes \ldots \otimes V_m \otimes \coend^{\otimes j-1}} \otimes \id_{\coend^{\otimes g-j}} \right) \right) \\*
  &\hspace*{\parindent} = \eviso{V_i \otimes \ldots \otimes V_m} \left( (\id_{\coend^{\otimes g-j}} \otimes \calH_{\rmR,\coend^{\otimes j-1} \otimes V_m^* \otimes \ldots \otimes V_i^*}) \circ \coeviso{V_i \otimes \ldots \otimes V_m} (\phi(x')) \right) \\*
  &\hspace*{\parindent} = \repL(H_{i,j}) \left( \phi(x') \right),
 \end{align*}
where we used the naturality of the twist to deduce that
 \[
  (\theta_X \otimes \id_Y) \circ f = (\id_X \otimes \theta_Y) \circ f. \qedhere
 \]
\end{proof}

\begin{remark}
 When $\calC = \mods{H}$
 for a factorizable ribbon Hopf algebra $H$, 
 explicit formulas for Lyubashenko's projective representations can be found in several places. The case
 of genus
  $g = 1$ was first discussed in \cite[Thm.~4.4]{LM94} (using the adjoint representation instead of the coadjoint one) and in \cite[Sec.~2.5]{K94}, see also \cite[Rem.~8.3]{FGR17} for the corresponding equations in our conventions. The higher genus case $g > 1$ was studied in \cite[Sec.~4]{FSS12} (using the category $\bimods{H}$ instead of $\mods{H}$) and in \cite[Thm.\,5.12]{F18}.
 \end{remark}

\section{Dehn twists for the small quantum group}\label{S:properties}

In this section, we consider the particular modular category $\calC = \mods{\bar{U}_q \sl_2}$ coming from the representation theory of the small quantum group $\bar{U}_q \sl_2$, and we prove that the action of Dehn twists on state spaces of closed surfaces without decorations has infinite order. This is in contrast with the Reshetikhin-Turaev TQFT for the semisimple modular category obtained as a subquotient of $\calC$, as the action of all Dehn twists has finite order there. A closely related observation on the infinite order of Dehn twists was made in \cite{BCGP14} for the non-semisimple graded TQFTs obtained from the so-called \textit{unrolled} quantum group $U^H_q \sl_2$.

\subsection{Small quantum group}\label{SS:small-quantum}

Let us set $q = e^{\frac{2 \pi i}{r}}$ for some odd integer $r \geq 3$. For every natural number $k \in \N$ we introduce the notation 
\[
 \{ k \} := q^k - q^{-k},
 \quad [k] := \frac{\{ k \}}{\{ 1 \}},
 \quad [k]! := \prod_{j=1}^k [j].
\]
The small quantum group $\bar{U}_q \sl_2$, first defined in \cite{L90}, can be constructed as the $\C$-algebra with generators $\{ E,F,K \}$ and relations
\begin{gather*}
 E^r = F^r = 0, \qquad K^r = 1, \\*
 K E K^{-1} = q^2 E, \qquad K F K^{-1} = q^{-2} F, \qquad [E,F] = \frac{K - K^{-1}}{q-q^{-1}},
\end{gather*}
and with Hopf algebra structure obtained by 
setting\footnote{Lusztig considers the opposite coproduct, while we are using the one of \cite[Ex.~3.4.3]{M95}.}
\begin{align*}
 \Delta(E) &= E \otimes K + 1 \otimes E, & \varepsilon(E) &= 0, & S(E) &= -E K^{-1}, \\*
 \Delta(F) &= K^{-1} \otimes F + F \otimes 1, & \varepsilon(F) &= 0, & S(F) &= - K F, \\*
 \Delta(K) &= K \otimes K, & \varepsilon(K) &= 1, & S(K) &= K^{-1}.
\end{align*}
 A basis of $\bar{U}_q \sl_2$ is given by
\[
 \left\{ E^a F^b K^c \mid \ 0 \leq a,b,c \leq r - 1 \right\},
\]
as proved in \cite[Thm.~5.6]{L90}. Furthermore, the Hopf algebra $\bar{U}_q \sl_2$ supports a ribbon structure. Indeed, an R-matrix $R \in \bar{U}_q \sl_2 \otimes \bar{U}_q \sl_2$ and its inverse are given by
\begin{align*}
 R &= \frac{1}{r} \sum_{a,b,c=0}^{r-1} \frac{\{ 1 \}^a}{[a]!}
 q^{\frac{a(a-1)}{2} - 2bc} K^b E^a \otimes K^c F^a, \\*
 R^{-1} &= \frac{1}{r} \sum_{a,b,c=0}^{r-1} \frac{\{ -1 \}^a}{[a]!}
 q^{-\frac{a(a-1)}{2} + 2bc} E^a K^b \otimes F^a K^c,
\end{align*}
while a ribbon element $v \in \bar{U}_q \sl_2$ and its inverse are given by
\begin{align}
 v &= \frac{i^{\frac{r-1}{2}}}{\sqrt{r}} \sum_{a,b = 0}^{r - 1} \frac{\{ -1 \}^a}{[a]!} q^{-\frac{a(a-1)}{2} + \frac{(r+1)(a-b-1)^2}{2}} F^a K^b E^a, \nonumber \\
 v^{-1} &= \frac{i^{-\frac{r-1}{2}}}{\sqrt{r}} \sum_{a,b = 0}^{r - 1} \frac{\{ 1 \}^a}{[a]!} q^{\frac{a(a-1)}{2} + \frac{(r-1)(a+b-1)^2}{2}} F^a K^b E^a. \label{eq:ribbon-Uq}
\end{align}
The formulas for $v$ and $v^{-1}$ can be obtained by adapting the proof of \cite[Thm.~4.1.1]{FGST05}. A pivotal element $g \in \bar{U}_q \sl_2$ which is compatible with the ribbon structure is given by $g := K$, as explained in \cite[Prop.~XIV.6.5]{K95}. Compare with \cite[Sec.~3]{K94}, where the term \textit{balancing element} is used, and remark that Kerler considers a different antipode, which explains why he obtains the inverse of our pivotal element. A \textit{right integral} $\lambda : \bar{U}_q \sl_2 \to \C$ is given by
\[
 \lambda \left( E^a F^b K^c \right) := \frac{r^3}{\{ 1 \}^{2r-2}} \delta_{a,r-1} \delta_{b,r-1} \delta_{c,1},
\]
see \cite[Prop.~A.5.1]{L94}, and a two-sided cointegral $\lambda^\co \in \bar{U}_q \sl_2$ is given 
by\footnote{Lyubashenko uses Lusztig's coproduct, which explains why our formula defines a right integral, instead of a left one. }
\[
 \lambda^\co := \frac{\{ 1 \}^{2r-2}}{r^3} \sum_{a=0}^{r-1} E^{r-1} F^{r-1} K^a,
\]
see \cite[Prop.~A.5.2]{L94}.
It is proved in \cite[Cor.~A.3.3]{L94} that the ribbon Hopf algebra $\bar{U}_q \sl_2$ is factorizable. This can also be checked directly by looking at the M-matrix $M \in \bar{U}_q \sl_2 \otimes \bar{U}_q \sl_2$ given by
\[
 M = \frac{1}{r} \sum_{a,b,c,d=0}^{r-1} \frac{\{ 1 \}^{a+b}}{[a]![b]!} q^{\frac{a(a-1)+b(b-1)}{2} - 2 cd - (b+c)(b-d)} F^b K^c E^a E^b K^d F^a,
\]
see for instance \cite[Ex.~3.4.3]{M95}. Now it follows that the category of finite-dimensional representations $\calC = \mods{\bar{U}_q \sl_2}$ is a modular category. A direct computation gives the \textit{stabilization parameters}
\begin{align*}
 \Delta_- &= \lambda(v) = i^{\frac{r-1}{2}} r^{\frac 32} q^{\frac{r+3}{2}}, &
 \Delta_+ &= \lambda(v^{-1}) = i^{-\frac{r-1}{2}} r^{\frac 32} q^{\frac{r-3}{2}},
\end{align*}
which determine the \textit{modularity parameter}
\[
 \zeta := \Delta_- \Delta_+ = r^3.
\]
We fix the square root
\[
 \calD = r^{\frac 32}
\]
of $\zeta$, which uniquely determines the coefficient
\[
 \delta = i^{-\frac{r-1}{2}} q^{\frac{r-3}{2}},
\]
as well as the corresponding normalization of $\rmL'_\calC$. The projective cover $P_{\one}$ of the trivial $\bar{U}_q \sl_2$-module $\one = \C$ is the indecomposable projective $\bar{U}_q \sl_2$-module with basis $\{ a_0, x_k, y_k, b_0 \in P_{\one} \mid 0 \leq k \leq r-2 \}$ and left $\bar{U}_q \sl_2$-action given, for every integer $0 \leq k \leq r-2$, by
\begin{align*}
 K \cdot a_0 &= a_0, \\*
 E \cdot a_0 &= 0, \\*
 F \cdot a_0 &= 0, \\*
 K \cdot x_k &= q^{-2k-2} x_k, \\*
 E \cdot x_k &= -[k][k+1] x_{k-1}, \\*
 F \cdot x_k &= 
 \begin{cases}
  x_{k+1} & 0 \leq k < r-2, \\
  a_0 & k = r-2,
 \end{cases} \\*
 K \cdot y_k &= q^{-2k-2} y_k, \\*
 E \cdot y_k &= 
 \begin{cases}
  a_0 & k = 0, \\
  -[k][k+1] y_{k-1} & 0 < k \leq r-2,
 \end{cases} \\*
 F \cdot y_k &= y_{k+1}, \\*
 K \cdot b_0 &= b_0, \\*
 E \cdot b_0 &= x_{r-2} \\*
 F \cdot b_0 &= y_0 
\end{align*}
where $x_{-1} := y_{r-1} := 0$, compare with \cite[Sec.~C.2]{FGST05}. We denote with $h \in \End_\calC(P_{\one})$ the nilpotent endomorphism given by 
\begin{equation}\label{eq:h-def}
 h(a_0) = h(x_k) = h(y_k) = 0, \qquad h(b_0) = a_0
\end{equation}
for every integer $0 \leq k \leq r-2$, and we remark that the non-degenerate modified trace $\rmt$ determined by $\lambda$ satisfies
\[
 \rmt_{P_{\one}}(h) \neq 0.
\]
This follows from the fact that $\End_\calC(P_{\one})$ is linearly generated by $\id_{P_{\one}}$ and $h$, and that $h^2 = 0$.

\subsection{Infinite order of Dehn twist actions}\label{SS:infinite-order}

We are now ready to prove the announced property.

\begin{proposition}\label{P:infinite_order_Dehn_twists}
 If $\calC = \mods{\bar{U}_q \sl_2}$, $\bbSigma = (\Sigma,\varnothing,\lambda)$ is a connected object of $\adCob_\calC$, and $\gamma \subset \Sigma$ is an essential simple closed curve, then $\prepT(\tau_\gamma)$ has infinite order in $\PGL_\C(\rmV_\calC(\bbSigma))$.
\end{proposition}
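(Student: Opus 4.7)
Since $\tau_{\phi\gamma} = \phi\,\tau_\gamma\,\phi^{-1}$ in $\MCG(\Sigma_g)$, the projective order of $\prepT(\tau_\gamma)$ depends only on the orbit of $\gamma$ under $\MCG(\Sigma_g)$. By the topological classification of essential simple closed curves on a closed orientable surface, we may replace $\gamma$ by a standard representative: either the non-separating curve $\gamma_1$, or a separating curve $\alpha_k$ (with $2 \le k \le g$) of type $k-1$, in the notation of Figure~\ref{fig:curves-for-generators}. By Theorem~\ref{T:main-equiv} and the duality \eqref{eq:sig-sig'-dual}, it is equivalent to show that $\prepCpr(\tau_\gamma)$ has infinite order in $\PGL_\C(\rmX'_{g,\varnothing})$ for $\rmX'_{g,\varnothing} = \calC(\coend^{\otimes g},\one)$; Proposition~\ref{P:equivalence} then identifies $\prepCpr(\tau_{\gamma_1}^{-n})(x')$ and $\prepCpr(\tau_{\alpha_k}^{-n})(x')$, up to nonzero scalars, with
\[
 x'\circ\bigl(\calT^n\otimes\id^{\otimes (g-1)}\bigr)
 \qquad\text{and}\qquad
 x'\circ\bigl(\id^{\otimes (k-2)}\otimes\calH^n\otimes\id^{\otimes (g-k)}\bigr),
\]
respectively.

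The plan is next to reduce from arbitrary genus $g$ to the minimal case. Using the counit $\counitL\colon\coend\to\one$ of the braided Hopf algebra $\coend$, any $x'\in\calC(\coend^{\otimes s},\one)$ extends to $\tilde x':=\counitL^{\otimes a}\otimes x'\otimes\counitL^{\otimes b}\in\calC(\coend^{\otimes g},\one)$ with $a+b=g-s$, and for $f\in\End_\calC(\coend^{\otimes s})$ a direct computation yields
\[
\tilde x'\circ\bigl(\id^{\otimes a}\otimes f\otimes\id^{\otimes b}\bigr) = \counitL^{\otimes a}\otimes(x'\circ f)\otimes\counitL^{\otimes b}.
\]
Since $\counitL$ is non-zero and tensoring with a non-zero linear form is injective on morphism spaces of the $\Bbbk$-linear category $\calC$, a scalar-$\lambda$ action on $\calC(\coend^{\otimes g},\one)$ forces $x'\circ f=\lambda x'$ for every $x'\in\calC(\coend^{\otimes s},\one)$. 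Applied to $f=\calT^n$ with $s=1$ and to $f=\calH^n$ with $s=2$, this reduces the problem to showing that, for $\calC=\mods{\bar U_q\sl_2}$ and every $n\ne 0$, pre-composition with $\calT^n$ on $\calC(\coend,\one)$ and with $\calH^n$ on $\calC(\coend^{\otimes 2},\one)$ are both non-scalar.

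For the non-separating case, I identify $\coend=\coad=H^*$ with $H=\bar U_q\sl_2$ and compute from~\eqref{E:T} that $\calT\in\End_H(\coad)$ satisfies $(\calT\alpha)(x) = \alpha(xv^{-1})$. Under the isomorphism $\calC(\coend,\one)\cong Z(H)$ sending $\xi$ to the unique $z_\xi\in H$ with $\xi(\phi)=\phi(z_\xi)$---well-defined because the $H$-equivariance of $\xi$ under the coadjoint action is equivalent to the centrality of $z_\xi$---pre-composition with $\calT$ translates into multiplication by $v^{-1}$ on $Z(H)$, which is a scalar precisely when $v^{-1}$ is a scalar in $Z(H)$. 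Since $\End_H(P_\one)=\C\id\oplus\C h$ with $h^2=0$ and $h\ne 0$ (Section~\ref{SS:small-quantum}), and since the trivial submodule $\C a_0\subset P_\one$ forces the semisimple part of the central element $v|_{P_\one}$ to be $\id$, we have $v|_{P_\one}=\id+b\,h$ for some $b\in\C$. The heart of the proof, and its main obstacle, is an explicit computation from~\eqref{eq:ribbon-Uq}: tracking the contribution of the summand $F^{r-1}K^cE^{r-1}$ of $v^{-1}$ to the coefficient of $a_0$ in $v^{-1}\cdot b_0$, via the chains $b_0\to x_{r-2}\to\cdots\to x_0$ under iterated $E$ and $x_0\to\cdots\to x_{r-2}\to a_0$ under iterated $F$, yields $b\ne 0$. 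Hence $v^{-n}|_{P_\one}=\id-n\,b\,h$ is non-scalar for every $n\ne 0$, so $v^{-n}\ne\lambda\cdot 1$ in $H$, completing the non-separating case. The separating case follows the same pattern applied to $\calH^n=\Omega^n\circ(\calT\otimes\calT)^n$---the commutation $\Omega\circ(\calT\otimes\calT)=(\calT\otimes\calT)\circ\Omega$ being a consequence of the naturality of the double braiding in each factor---with the Jordan structure of $\calT^n$ on $\coend$ propagating to $\calH^n$ on $\coend^{\otimes 2}$.
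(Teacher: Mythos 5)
Your treatment of the non-separating case is a genuinely different and essentially viable route: instead of the paper's approach (explicit skein vectors $[M,T_\beta,0]$, $[M,\dot T_\beta,0]$ in a handlebody and a pairing computation showing they span a rank-$2$ Jordan block), you transfer to the algebraic model, strip off superfluous genus by tensoring with the counit $\counitL$ (valid, since $\counitL$ is split epi by $\counitL\circ\unitL=\id_{\one}$), identify $\calC(\coend,\one)$ with $Z(H)$, and reduce to the statement that $v^{-n}$ is not a scalar multiple of $1$. Both arguments ultimately rest on the same computation, namely that $v^{-1}$ acts on $P_{\one}$ as $\id+b\,h$ with $b\neq 0$. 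One caveat there: every summand $F^aK^cE^a$ with $a\geq 1$ sends $b_0$ into $\C a_0$, so $b$ is a sum over all $a\geq 1$ (a Gauss-sum-type expression), and exhibiting a non-zero contribution from the single summand $a=r-1$ does not by itself prove $b\neq 0$; you need the full sum, which the paper records as $(r-1)(q-q^{-1})$.

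The genuine gap is the separating case. Your reduction rests on the claim that $\alpha_k$ is a separating curve of type $k-1$, and this is false: $\alpha_k$ is the curve whose $(-1)$-framed push-off encircles one strand of the $(k-1)$-th coend loop and one strand of the $k$-th (this is what produces $\calH=\Omega\circ(\calT\otimes\calT)$ on two adjacent factors), so its homology class is $m_{k-1}\pm m_k\neq 0$ and $\alpha_k$ is non-separating. Indeed, all curves in the generating set of Section~\ref{SS:algebraic_translation} are non-separating, so Proposition~\ref{P:equivalence} gives no direct formula for the action of a separating Dehn twist, and the change-of-coordinates reduction leaves every separating curve unaccounted for. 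Moreover, even granting a reduction to showing that precomposition with some $n$-th power of a monodromy-type operator is non-scalar, the assertion that ``the Jordan structure of $\calT^n$ on $\coend$ propagat[es]'' is not an argument: non-scalarity of $\calT^n$ does not formally imply non-scalarity of $\Omega^n\circ(\calT^n\otimes\calT^n)$ after precomposition on the relevant morphism space. This is precisely where the paper has to work hardest: it builds explicit test vectors from an embedding $\iota\colon P_{\one}\to\eend$ (using that $P_{\one}$ is a direct summand of the adjoint representation), the Drinfeld map and the Hopf pairing, and verifies non-vanishing by pairing against a second non-trivial graph $T'_\omega$. None of that is present in your proposal, so the separating case remains unproved.
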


\begin{proof}
 Let us first suppose $\gamma$ is non-separating. Then there exists a simple closed curve $\beta \subset \Sigma$ intersecting $\gamma$ exactly once. Let $M$ be a connected cobordism from $\varnothing$ to $\Sigma$ such that $\gamma$ bounds a disc in $M$, and let us consider the vector $[M,T_\beta,0] \in \rmV_\calC(\bbSigma)$, where $T_\beta$ is the blue knot obtained from $\beta$ by choosing framing tangent to $\Sigma$ and label $P_{\one}$, and by pushing the result in the interior of $M$. Also, let $\dot{T}_\beta$ denote the blue graph obtained from $T_\beta$ by adding a blue coupon labeled by the endomorphism $h$ of~$P_{\one}$ from equation~\eqref{eq:h-def}.
A computation based on the skein equivalence of Figure~\ref{F:Dehn_twist} and the action of 
 the inverse\footnote{Recall our convention on ribbon twist in Sec.~\ref{subsec:Hopf}.} ribbon element~\eqref{eq:ribbon-Uq} on $P_{\one}$ shows that
\begin{align}
   \rmV_\calC(\bbSigma \times \bbI_{\tau_\gamma})[M,T_\beta,0] &
    = \delta^m \cdot \Big( [M,T_\beta,0] + (r-1)(q-q^{-1})[M,\dot{T}_\beta,0] \Big), 
\label{eq:example-non-separating_1}
\\*
 \rmV_\calC(\bbSigma \times \bbI_{\tau_\gamma})[M,\dot{T}_\beta,0] &
    = \delta^m \cdot
[M,\dot{T}_\beta,0],
\label{eq:example-non-separating_2}
 \end{align}
 for some $m \in \mathbb{Z}$. We will now check that $[M,T_\beta,0]$ and $[M,\dot{T}_\beta,0]$ are linearly independent. This implies that $\rmV_\calC(\bbSigma \times \bbI_{\tau_\gamma})$ acts as a Jordan block of rank $2$ on the span of these two vectors, and hence has infinite order even in $\PGL_\Bbbk(\rmV_\calC(\bbSigma))$.

  \begin{figure}[t]
  \includegraphics{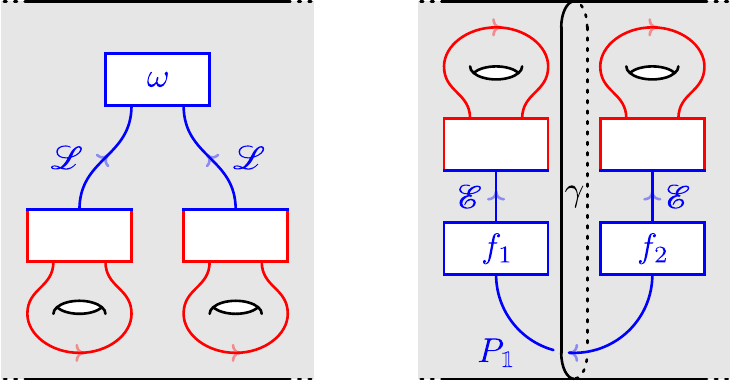}
  \caption{Bichrome graphs $T'_\omega$ and $T_{f_1,f_2}$.}
  \label{F:separating_separated}
 \end{figure}
 \begin{figure}[t]
  \includegraphics{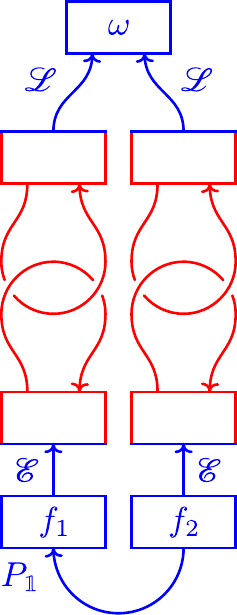}
  \caption{Bichrome graph $\dot{T}_{f_1,f_2} \cup T'_\omega$.}
  \label{F:separating_pairing}
 \end{figure}
 
To show linear independence, we first note that $[M,\dot{T}_\beta,0]$ is a non-trivial vector of $\rmV_\calC(\bbSigma)$. Indeed, if $M'$ is a cobordism from $\Sigma$ to $\varnothing$ such that $M \cup_\Sigma M' \cong S^3$, then, in the notation of equation~\eqref{eq:L'-on-cobordisms}, for some $n \in \Z$ we have
 \[
  \langle [M',\varnothing,0],[M,\dot{T}_\beta,0] \rangle_{\bbSigma} =
  \rmL'_\calC(S^3,\dot{T}_\beta,n) = \delta^n \rmt_{P_{\one}}(h) \neq 0.
 \]
This means $[M,\dot{T}_\beta,0] \neq 0$. If $[M,T_\beta,0]$ were zero, then \eqref{eq:example-non-separating_1} would imply that $[M,\dot{T}_\beta,0] = 0$, which is a contradiction. If $[M,T_\beta,0]$ were non-zero, but proportional to $[M,\dot{T}_\beta,0]$, then \eqref{eq:example-non-separating_1} and \eqref{eq:example-non-separating_2} would assign to $[M,T_\beta,0]$ two different eigenvalues. Thus $[M,T_\beta,0]$ and $[M,\dot{T}_\beta,0]$ are linearly independent.

 Next, let us suppose $\gamma$ is separating. Then there exist non-empty submanifolds $\Sigma_1, \Sigma_2 \subset \Sigma$ of strictly positive genus with disjoint interior and common boundary~$\gamma$. Let $M$ be a connected cobordism from $\varnothing$ to $\Sigma$ such that $\gamma$ bounds a disc in $M$, and let us consider the vector $[M,T_{f_1,f_2},0] \in \rmV_\calC(\bbSigma)$, where $T_{f_1,f_2}$ is the bichrome graph represented in the right-hand side of Figure \ref{F:separating_separated} for morphisms
 \[
  f_1 := \iota, \qquad 
  f_2 := \zeta^{-2} \cdot D \circ \phi_\omega \circ D^* \circ \pi^*,
 \] 
 where $\iota : P_{\one} \to \eend$ and $\pi : \eend \to P_{\one}$ are injection and projection morphisms satisfying $\pi \circ \iota = \id_{P_{\one}}$, where $D : \coend \to \eend$ is the Drinfeld map, where 
 $\phi_\omega : \coend^* \to \coend$
  is the isomorphism satisfying $\omega \circ (\id_\coend \otimes \phi_\omega) = \rev_\coend$, and where $\zeta$ is the modularity parameter. Remark that $\iota$ and $\pi$ exist because, as explained in \cite{O95}, $P_{\one}$ is a direct summand of multiplicity $\frac{r+1}{2}$ in $\eend$, which is the adjoint representation of $\bar{U}_q \sl_2$. On the other hand, the existence of $\phi_\omega$ follows from the non-degeneracy of $\omega$.  Note also that, while it is possible to simplify the expression for $f_2$, this more complicated form is convenient for the computation below.
Let $\dot{T}_{f_1,f_2}$ denote the bichrome graph obtained from $\dot{T}_{f_1,f_2}$ by adding a blue coupon with label $h$,
the endomorphism of~$P_{\one}$. Similarly to the calculation in~\eqref{eq:example-non-separating_1},
 one can check that for some $m \in \mathbb{Z}$,
 \begin{align*}
  \rmV_\calC(\bbSigma \times \bbI_{\tau_\gamma})[M,T_{f_1,f_2},0] 
  &
    = \delta^m \cdot \Big( [M,T_{f_1,f_2},0] + (r-1)(q-q^{-1})[M,\dot{T}_{f_1,f_2},0] \Big), \\*
 \rmV_\calC(\bbSigma \times \bbI_{\tau_\gamma})[M,\dot{T}_{f_1,f_2},0] &
    = \delta^m \cdot
 [M,\dot{T}_{f_1,f_2},0].
 \end{align*}
As before it is enough to check that  $[M,\dot{T}_{f_1,f_2},0] \neq 0$ in order to conclude that $\rmV_\calC(\bbSigma \times \bbI_{\tau_\gamma})$ has infinite order in $\PGL_\Bbbk(\rmV_\calC(\bbSigma))$.
 If $M'$ is a cobordism from $\Sigma$ to $\varnothing$ such that $M \cup_\Sigma  M' \cong S^3$, then for some $n \in \Z$ we have
 \begin{align*}
  \langle [M',T'_\omega,0],[M,\dot{T}_{f_1,f_2},0] \rangle_{\bbSigma} 
  &= \rmL'_\calC(S^3,\dot{T}_{f_1,f_2} \cup T'_\omega,n) 
  \\
  & \overset{\raisebox{2.5pt}{$(\ast)$}} =
  \delta^n \rmt_{P_{\one}}(h) \neq 0,
 \end{align*}
 where $T'_\omega$ is the bichrome graph represented in the left-hand side of Figure \ref{F:separating_separated}, and where $\dot{T}_{f_1,f_2} \cup T'_\omega$ is the one represented in Figure \ref{F:separating_pairing}. 
    In step $(\ast)$ we use that by \cite[Lem.~4.3, Cor.~4.6]{DGGPR19}, the inverse of the Drinfeld map $D : \coend \to \eend$ can be written as an evaluation of a bichrome graph, namely
\[
 F_\intL \left( \pic{inverse_Drinfeld} \right) = 
 \zeta D^{-1} .
\]
A cutting presentation of the admissible closed bichrome graph represented in Figure~\ref{F:separating_pairing} is then evaluated by $F_\intL$ to
\[
 \includegraphics{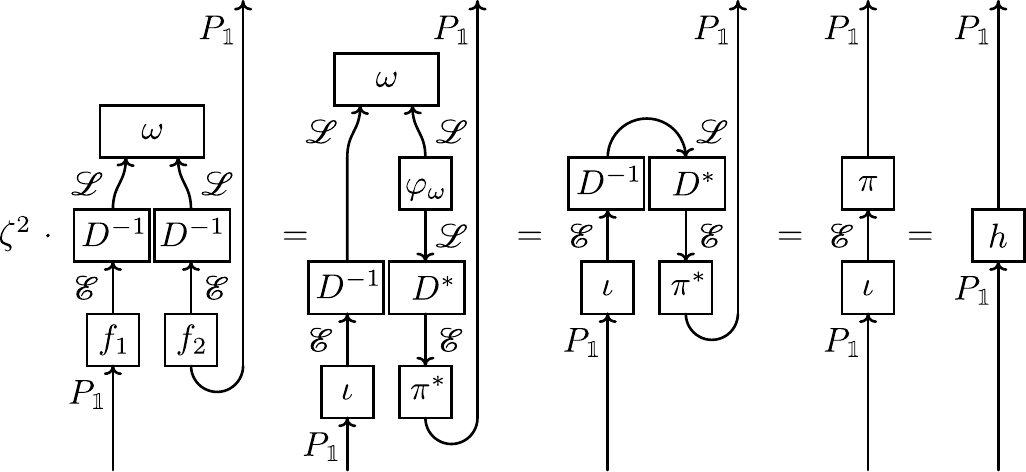} \qedhere
\]
\end{proof}

\appendix

\section{Extended mapping class group representations}\label{A:extended_MCG_reps}

In this appendix, we explain how to upgrade the projective representations
\[
 \prepT : \MCG(\bbSigma) \to \PGL_\Bbbk(\rmV_\calC(\bbSigma)), \quad
 \prepT' : \MCG(\bbSigma) \to \PGL_\Bbbk(\rmV'_\calC(\bbSigma))
\]
to linear representations of a centrally extended version of mapping class groups. This central extension is determined by the composition law for morphisms of the admissible cobordism category $\adCob_\calC$.

If $\bbSigma = (\Sigma,P,\lambda)$ is a connected object of $\adCob_\calC$, then we consider
\[
 \eDiff(\bbSigma) := \{ (f,n) \in \Diff(\Sigma) \times \Z \mid f(P) = P \},
\]
which is a group with respect to the extended composition
\[
 (f',n') \circ (f,n) := (f' \circ f,n+n'-\mu(f_*(\lambda),\lambda,{f'}^{-1}_*(\lambda))),
\]
we denote with $\eDiff_0(\bbSigma)$ the subgroup whose elements $(f,0)$ are isotopic to $(\id_\Sigma,0)$ within $\eDiff(\bbSigma)$, and we define the \textit{extended mapping class group of $\bbSigma$} to be the quotient group
\[
 \eMCG(\bbSigma) := \eDiff(\bbSigma) / \eDiff_0(\bbSigma).
\]

The \textit{mapping cylinder} of an element $(f,n) \in \eDiff(\bbSigma)$ is defined as the morphism $\bbSigma \times \bbI_{(f,n)} : \bbSigma \to \bbSigma$ of $\adCob_\calC$ given by
\[
 (\Sigma \times I_f,P \times I,n).
\]
It is easy to remark that 
\[
 \left( \bbSigma \times \bbI_{(f',n')} \right) \circ \left( \bbSigma \times \bbI_{(f,n)} \right) = \bbSigma \times \bbI_{(f',n') \circ (f,n)}
\]
as morphisms of $\adCob_\calC$. This means we have a group homomorphism 
\[
 \erepT : \eMCG(\bbSigma) \to \GL_\Bbbk(\rmV_\calC(\bbSigma))
\]
sending every extended mapping class $[f,n] \in \eMCG(\bbSigma)$ to the operator 
\[
 \rmV_\calC(\bbSigma \times \bbI_{(f,n)}) \in \GL_\Bbbk(\rmV_\calC(\bbSigma)),
\]
and similarly we have a homomorphism 
\[
 \erepT' : \eMCG(\bbSigma) \to \GL_\Bbbk(\rmV'_\calC(\bbSigma))
\]
sending every extended mapping class $[f,n] \in \eMCG(\bbSigma)$ to the operator 
\[
 \rmV'_\calC(\bbSigma \times \bbI_{(f,n)})^{-1} \in \GL_\Bbbk(\rmV'_\calC(\bbSigma)).
\]
These representations satisfy 
\[
 \erepT' \cong (\erepT^t)^{-1}
\]
with respect to the pairing $\langle \_,\_ \rangle_{\bbSigma}$.

\section{Maslov index for meridional Dehn twists}\label{A:Maslov}

In this appendix, we fix a connected object $\bbSigma = (\Sigma,P,\lambda)$ of $\adCob_\calC$, a simple closed curve $\gamma \subset \Sigma$, and we compare linear endomorphisms of the state space $\rmV_\calC(\bbSigma)$ determined by two different endomorphisms of $\bbSigma$ introduced in Section \ref{SS:Dehn_twist_curve_op}, the mapping cylinder $\bbSigma \times \bbI_{\tau_\gamma^{\pm 1}}$ and the curve cylinder $\bbSigma \times \bbI_{\gamma_\mp}$.

\begin{lemma}\label{L:Maslov}
 If the homology class of $\gamma$ belongs to the Lagrangian subspace $\lambda$ of $H_1(\Sigma;\R)$, then
 \[
  \rmV_\calC(\bbSigma \times \bbI_{\tau_\gamma^{\pm 1}}) = \Delta_\mp^{-1} \rmV_\calC(\bbSigma \times \bbI_{\gamma_\mp}).
 \]
\end{lemma}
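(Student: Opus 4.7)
The plan is to apply \cite[Prop.~4.10]{DGGPR19} and then compute the resulting Maslov index correction explicitly under the hypothesis $[\gamma]\in \lambda$. First, by \eqref{E:mapping_cyl_eq_surgereg_cyl} the underlying cobordism of $\bbSigma \times \bbI_{\tau_\gamma^{\pm 1}}$ is isomorphic to the surgered cylinder $(\Sigma \times I)(\gamma_\mp)$, and both sides are equipped with trivial signature defect. Applying \cite[Prop.~4.10]{DGGPR19} to this surgery description immediately gives \eqref{E:coeff_between_mapping_and_curve_cylinder}:
\[
 \rmV_\calC(\bbSigma \times \bbI_{\tau_\gamma^{\pm 1}}) = \calD^{-1}\delta^{m_\mp}\, \rmV_\calC(\bbSigma \times \bbI_{\gamma_\mp}),
\]
where $m_\mp \in \Z$ is the signature defect produced by the composition rule \eqref{eq:compose-Cob} when one writes the surgered cobordism as the composition of the curve cylinder $\bbSigma \times \bbI_{\gamma_\mp}$ with a solid torus filling the surgery locus. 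Given \eqref{eq:delta-def}, proving $m_\mp = \pm 1$ is equivalent to the claim.

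Next, I would unwind the definition of $m_\mp$. It is a Maslov index $\mu(A,B,C)$ of three Lagrangians at the gluing surface, which can be arranged to sit inside $H_1(T^2;\R)$, the first homology of the boundary torus of a tubular neighborhood $N(\gamma) \subset \Sigma \times \{\frac12\}$. More precisely, $A$ is the image under the pushforward of $\lambda$ through the part of the cobordism below the cutting surface, $B$ is the reference Lagrangian coming from the intermediate surface, and $C$ is the Lagrangian determined by the meridian disk of the surgery solid torus, whose class is the meridian of $\gamma$ shifted by $\mp[\gamma]$ due to the $\mp 1$-framing.

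Under the hypothesis $[\gamma]\in\lambda$, the Dehn twist $\tau_\gamma^{\pm 1}$ acts trivially on $\lambda$, so on the complement of the torus $T^2$ all three Lagrangians agree. The Maslov index therefore reduces to a computation in the symplectic plane $H_1(T^2;\R)$ spanned by the meridian $m_\gamma$ and longitude $\ell_\gamma$ of $N(\gamma)$, with symplectic form the intersection pairing. In this plane $A$ is the line spanned by $\ell_\gamma$ (since $[\gamma]\in\lambda$), $B$ is the reference Lagrangian from the cylinder structure (the line of $\ell_\gamma$ again), and $C$ is the line spanned by $m_\gamma \mp \ell_\gamma$. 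A direct evaluation of $\mu$ of these three lines in a symplectic plane using any standard formulation (e.g.\ Wall's or Turaev's) gives exactly $\pm 1$, with the sign matching the framing convention.

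The main obstacle is the bookkeeping: identifying precisely which triple of Lagrangians occurs in \cite[Prop.~4.10]{DGGPR19}, matching the orientation and framing conventions used there with ours, and confirming the sign of the Maslov index. I would verify the final sign by testing on the model case where $\Sigma = T^2$ and $\gamma$ is one of the standard generators with $\lambda = \R[\gamma]$, since this setup realizes the full Maslov computation in a single copy of $H_1(T^2;\R)$, and the answer in this model then propagates to the general case by the locality of the Maslov correction around $\gamma$.
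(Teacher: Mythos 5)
Your overall strategy --- reduce to the surgery presentation \eqref{E:mapping_cyl_eq_surgereg_cyl}, invoke \cite[Prop.~4.10]{DGGPR19}, and pin down the exponent $m_\mp$ in \eqref{E:coeff_between_mapping_and_curve_cylinder} as a Maslov index computed in the first homology of the boundary torus of $N(\gamma)$ --- is exactly the route the paper takes, and the reduction to showing $m_\mp=\pm1$ is correct.

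However, your identification of the three Lagrangians is wrong, and with your triple the computation does not close. You take $A=B=\langle\ell_\gamma\rangle$; since the Maslov index is completely antisymmetric in its entries, any triple with two equal entries has $\mu=0$, so your claimed value $\pm1$ cannot come out of $\mu(\langle\ell_\gamma\rangle,\langle\ell_\gamma\rangle,\langle m_\gamma\mp\ell_\gamma\rangle)$. The correct triple, read off from the composition rule \eqref{eq:compose-Cob} applied to the decomposition underlying \cite[Prop.~4.10]{DGGPR19}, is $(\lambda_\ell,\lambda_m,\lambda_{\ell\mp m})$: the first entry is the longitude because the surgery solid torus $D^2\times S^1$ kills $\ell$ (this has nothing to do with $[\gamma]\in\lambda$); the second entry is the \emph{meridian} $\lambda_m$, since that is the Lagrangian decorating the standard torus object $\bbSigma_2$ appearing as the intermediate surface --- this is the entry you got wrong; and the third entry is the pull-back of $\lambda$ through the knot exterior, which is where the hypothesis $[\gamma]\in\lambda$ is actually used (it forces this pull-back to be the line $\langle\ell\mp m\rangle$). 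With this triple one finds $(\lambda_\ell+\lambda_m)\cap\lambda_{\ell\mp m}=\lambda_{\ell\mp m}$ and $\langle\ell\mp m,\ell\mp m\rangle=(\mp m)\pitchfork(\ell\mp m)=\pm1$, giving $m_\mp=\pm1$ as required. A further point you elide by quoting \eqref{E:coeff_between_mapping_and_curve_cylinder} as a black box: one also needs $\mu(\lambda_m,\lambda_m,\lambda_{\ell\mp m})=0$ to know that the curve cylinder itself is realized by gluing in the red-cored solid torus with no signature correction. It is precisely the asymmetry between the two fillings ($\lambda_\ell$ versus $\lambda_m$ in the first slot) that produces the nontrivial exponent; in your setup both fillings would contribute the same vanishing index, which erases the effect you are trying to compute.
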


\begin{proof}
  In \cite[Sec.~4.4]{DGGPR19}, a connected object $\bbSigma_2$ of $\adCob_\calC$, together with morphisms $\bbA_2, \bbB_2 : \varnothing \to \bbSigma_2$, is introduced. Let us recall their definition here. First, we have
  \[
   \bbSigma_2 = (S^1 \times S^1,\varnothing,\lambda_m),
  \]
  where $\lambda_m \subset H_1(S^1 \times S^1;\R)$ is the Lagrangian subspace generated by the homology class $m$ of the meridian $\{ (1,0) \} \times S^1$. Next, we have
  \[
   \bbA_2 = (S^1 \times \overline{D^2},K,0),
  \]
  where the overline stands for orientation reversal, and where $K \subset S^1 \times \overline{D^2}$ is the red knot $S^1 \times \{ (0,0) \}$ with framing determined by the longitude $S^1 \times \{ (1,0) \}$. Finally, we have
  \[
   \bbB_2 =
  (D^2 \times S^1,\varnothing,0).
  \]
  
  We will use these morphisms to relate $\bbSigma \times \bbI_{\tau_\gamma^{\pm 1}}$ and $\bbSigma \times \bbI_{\gamma_\mp}$. In order to do this, we first need to fix some notation. Let $N(\gamma_\pm)$ denote an open tubular neighborhood of the red knot $\gamma_\pm$ inside $\Sigma \times I$. Let $(\Sigma \times I) \smallsetminus N(\gamma_\pm)$ denote the corresponding cobordism from $(S^1 \times S^1) \sqcup \Sigma$ to $\Sigma$, with incoming boundary identification sending the meridian $\{ (1,0) \} \times S^1$ of $S^1 \times S^1$ to the meridian of $\partial N(\gamma_\pm)$ determined by $N(\gamma_\pm)$, and the longitude $S^1 \times \{ (1,0) \}$ of $S^1 \times S^1$ to the longitude of $\partial N(\gamma_\pm)$ determined by the framing of $\gamma_\pm$. Then, let $(\bbSigma \times \bbI) \smallsetminus \bbN(\gamma_\pm) : \bbSigma_2 \disjun \bbSigma \to \bbSigma$ denote the morphism of $\adCob_\calC$ given by
  \[
   ((\Sigma \times I) \smallsetminus N(\gamma_\pm),P \times I,0).
  \]
  
  Because of the composition rule \eqref{eq:compose-Cob}, computing $((\bbSigma \times \bbI) \smallsetminus \bbN(\gamma_\pm)) \circ \bbA_2$ and $((\bbSigma \times \bbI) \smallsetminus \bbN(\gamma_\pm)) \circ \bbB_2$ requires understanding Maslov indices of Lagrangian subspaces of $H_1(S^1 \times S^1;\R)$. Recall that, if $\omega$ is a symplectic form on a vector space $H$, and if $\lambda_1,\lambda_2,\lambda_3 \subset H$ are Lagrangian subspaces with respect to $\omega$, then $\mu(\lambda_1,\lambda_2,\lambda_3)$ is defined as the signature of the symmetric bilinear form $\langle \_,\_ \rangle$ on $(\lambda_1 + \lambda_2) \cap \lambda_3$ determined by
  \[
   \langle a,b \rangle := \omega(a_2,b)
  \]
  for all $a = a_1 + a_2, b = b_1 + b_2 \in (\lambda_1 + \lambda_2) \cap \lambda_3$, where $a_i,b_i \in \lambda_i$ for all $i \in \{ 1,2 \}$. As explained in \cite[Sec.~IV.3.5]{T94}, the Maslov index $\mu$ is completely antisymmetric in all its entries. In our case, the symplectic intersection form $\pitchfork$ on the 2-dimensional space $H_1(S^1 \times S^1;\R)$ is completely determined by $\ell \pitchfork m = 1$, where $\ell$ denotes the homology class of the longitude $S^1 \times \{ (1,0) \}$.
  
  On one hand, we have
  \[
   \bbSigma \times \bbI_{\gamma_\mp} = ((\bbSigma \times \bbI) \smallsetminus \bbN(\gamma_\mp)) \circ (\bbA_2 \disjun \id_{\bbSigma}).
  \]
  Indeed, we claim that the signature defect of the composition is given by
  \[
   -\mu(\lambda_m,\lambda_m,\lambda_{\ell \mp m}) = 0,
  \]
  where $\lambda_{\ell \mp m} \subset H_1(S^1 \times S^1;\R)$ is the Lagrangian subspace generated by $\ell \mp m$. To compute the first entry, remark that $\lambda_m$ is the kernel of the embedding induced by the outgoing boundary identification of the cobordism $S^1 \times \overline{D^2}$. To compute the third entry, remark that the pull-back of $\lambda$ to $H_1(S^1 \times S^1;\R)$ induced by the cobordism $(\Sigma \times I) \smallsetminus N(\gamma_\mp)$ contains $\ell \mp m$, because $\lambda$ contains the homology class of the curve $\gamma$ by hypothesis, which is homologous, in $(\Sigma \times I) \smallsetminus N(\gamma_\mp)$, to the longitude of $\partial N(\gamma_\mp)$ obtained as sum of the framing of $\gamma_\mp$ with the meridian of $N(\gamma_\mp)$, provided the latter is taken with sign $\pm 1$. Then, since Lagrangian subspaces of $H_1(S^1 \times S^1;\R)$ are 1-dimensional, this proves the claim.

  On the other hand, thanks to the isomorphism \eqref{E:mapping_cyl_eq_surgereg_cyl}, we have
  \[
   \bbSigma \times \bbI_{\tau_\gamma^{\pm 1}} = ((\Sigma \times I)(\gamma_\mp),P \times I,0),
  \]
  while
  \[
   ((\bbSigma \times \bbI) \smallsetminus \bbN(\gamma_\mp)) \circ (\bbB_2 \disjun \id_{\bbSigma}) = ((\Sigma \times I)(\gamma_\mp),P \times I,n_\mp),
  \]
  for some $n_\mp \in \Z$. Then, it follows from \cite[Prop.~4.10]{DGGPR19} that
  \begin{align*}
    \delta^{n_\mp} \rmV_\calC(\bbSigma \times \bbI_{\tau_\gamma^{\pm 1}}) 
    &= \rmV_\calC((\bbSigma \times \bbI) \smallsetminus \bbN(\gamma_\mp)) \circ 
    \rmV_\calC(\bbB_2 \disjun \id_{\bbSigma}) \\*
    &= \calD^{-1} \rmV_\calC((\bbSigma \times \bbI) \smallsetminus \bbN(\gamma_\mp)) \circ 
    \rmV_\calC(\bbA_2 \disjun \id_{\bbSigma}) \\*
    &= \calD^{-1} \rmV_\calC(\bbSigma \times \bbI_{\gamma_\mp}).
  \end{align*}

  It remains to show that $\Delta_\mp = \calD \delta^{n_\mp}$, i.e. that $n_\mp = \mp 1$. We claim that
  \[
   n_\mp = -\mu(\lambda_\ell,\lambda_m,\lambda_{\ell \mp m})
  \]
  where $\lambda_\ell \subset H_1(S^1 \times S^1;\R)$ is the Lagrangian subspace generated by $\ell$. To compute the first entry, remark that $\lambda_\ell$ is the kernel of the embedding induced by the outgoing boundary identification of the cobordism $D^2 \times S^1$. Now we have $(\lambda_\ell + \lambda_m) \cap \lambda_{\ell \mp m} = \lambda_{\ell \mp m}$, and
  \[
   \langle \ell \mp m,\ell \mp m \rangle = (\mp m) \pitchfork (\ell \mp m) = \mp m \pitchfork \ell = \pm 1.
  \]
  This means
  \[
   n_\mp = \mp 1. \qedhere
  \]
\end{proof}

\section{Equivalence for Hopf algebras}\label{A:equivalence}

In this appendix, we consider $\calC = \mods{H}$ for a finite-di\-men\-sion\-al factorizable ribbon Hopf algebra $H$ over $\Bbbk$, and we prove that the construction of \cite{DGGPR19} agrees with the one of \cite{DGP17}. More precisely, let $\calR_\lambda$ denote the category of bichrome graphs defined in \cite[Sec.~2.2]{DGP17}, and let us consider the functor 
\[
 F_\calR : \calR_\intL \to \calR_\lambda
\]
which preserves blue edges and coupons, labels red edges by the regular representation $H$, and replaces bichrome coupons, as recalled in Section~\ref{sss:LRT-functor}, with those defined in \cite[Sec.~2.2]{DGP17} like
\[
 \pic{string_link_2}
\]
where $\alpha \in [1]\calC([1]\ad,[1]\one)$ and $\alpha' \in [1]\calC([1]\one,[1]\coad)$ are defined as
\[
 \alpha := \pic{string_link_alpha} \qquad \qquad
 \alpha' := \pic{string_link_alpha_prime}
\]
Next, let $F_\lambda : \calR_\lambda \to \calC$ denote the Hennings-Reshetikhin-Turaev functor defined in \cite[Sec.~2.2]{DGP17}.

\begin{proposition}\label{P:equivalence_H-L}
 The diagram
 \begin{center}
 \begin{tikzpicture}[descr/.style={fill=white}]
  \node (P1) at (165:2.25) {$\calR_\intL$};
  \node (P2) at (195:2.25) {$\calR_\lambda$};
  \node (P3) at (0:0) {$\calC \hspace*{10pt}$};
  \draw
  (P1) edge[->] node[left] {\scriptsize $F_\calR$} (P2)
  (P1) edge[->] node[above] {\scriptsize $F_\intL$} (P3)
  (P2) edge[->] node[below] {\scriptsize $F_\lambda$} (P3);
 \end{tikzpicture}
 \end{center}
 commutes.
\end{proposition}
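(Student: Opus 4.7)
The plan is to verify the equality $F_\lambda \circ F_\calR = F_\intL$ by reducing to the combinatorics of an $n$-bottom presentation. Both functors agree with $F_\calC$ on the purely blue part of any ribbon graph: for $F_\intL$ this is the commuting triangle at the end of Section~\ref{sss:LRT-functor}, for $F_\lambda$ this is a defining property in~\cite{DGP17}, and $F_\calR$ acts as the identity on blue edges and coupons. So only the red subgraph and the bichrome coupons need to be compared.

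First I would fix an $n$-bottom presentation $\tilde T$ of a given bichrome graph $T \in \calR_\intL$. The plat closure operation on $\tilde T$ reconstructs $T$, and it is compatible with both evaluations: for $F_\intL$ through the dinaturality encoded in the universal property of $\coend$, and for $F_\lambda \circ F_\calR$ because $F_\calR$ preserves the plat-closure structure while $F_\lambda$ evaluates each closed $H$-labeled red circle by integration against $\lambda$. It therefore suffices to verify the identity on $n$-bottom graphs, where all red strands become vertical blue $H$-labeled strands after either evaluation.

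Second, I would apply Lemma~\ref{L:comparison_H-L} to make $F_\intL(\tilde T)$ explicit in the Hopf-algebraic setting: replace every $\coend$-bichrome coupon by $\id_{H^*} \otimes \eta$, so that $\tilde T_{(H,\dots,H)}$ becomes purely blue, evaluate it via $F_\calC$, and then precompose with $(\lambda \otimes 1)^{\otimes n}$. This is exactly the algorithm spelled out in the paragraph immediately preceding the statement. An analogous argument using the end-version of Lemma~\ref{L:comparison_H-L} (which follows from the same proof with $\ad$ in place of $\coad$) handles the $\eend$-bichrome coupons.

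Finally, I would compare coupon-by-coupon with $F_\lambda(F_\calR(T))$. The key pictorial identities are that the composite $i_H \circ (\id_{H^*} \otimes \eta) : H^* \to \coad$, followed by integration against $\lambda$ on the $H^*$ factor coming from the plat closure, reproduces the $\alpha'$-decoration of the $\coend$-bichrome coupon in~\cite{DGP17}, and dually that $j_H$ postcomposed with $\id_H \otimes \lambda$ reproduces the $\alpha$-decoration of the $\eend$-bichrome coupon. These equalities can be checked directly from the Hopf-algebraic formulas for $i_H, j_H$ recalled in Section~\ref{subsec:Hopf} and from the pictorial definitions of $\alpha, \alpha'$ given just above the statement. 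The main obstacle is the bookkeeping: correctly tracking where $\lambda$ and $\eta$ sit in each of the two conventions, and how the dinaturality tensor slots match up after the plat closure. Lemma~\ref{L:comparison_H-L} handles the analytic content, leaving a purely diagrammatic verification.
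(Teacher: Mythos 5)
Your first two steps match the paper's: reduce to an $n$-bottom presentation and invoke Lemma~\ref{L:comparison_H-L} to write $F_\intL(T) = F_\calC(\tilde T_{(H,\dots,H)}) \circ ((\lambda\otimes 1)^{\otimes n}\otimes \id)$. The gap is in your third step, where you declare that what remains is ``purely diagrammatic verification'' of a coupon-by-coupon match between $i_H,j_H,\eta,\lambda$ and the decorations $\alpha,\alpha'$. This misses the actual content of the comparison: the red parts of morphisms in $\calR_\intL$ and $\calR_\lambda$ are organized by \emph{different} presentations (plat-type $n$-bottom graphs with the $2n$ red endpoints paired side by side, versus the string-link presentation of \cite{DGP17}), and converting one into the other forces a non-trivial $H$-labeled red $2n$-braid $\beta$ between the graph and the caps $(\lambda\otimes 1)^{\otimes n}$. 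Your claim that ``$F_\calR$ preserves the plat-closure structure'' is exactly what fails on the nose; the two expressions differ by $F_\calC(\beta)$.

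Showing that $\beta$ contributes trivially is the heart of the paper's proof and is not bookkeeping: one must check that sliding a $(\lambda\otimes 1)$ pair through a braiding leaves the evaluation unchanged, and this uses dinaturality together with the genuine Hopf-algebraic identities $\lambda(xy)=\lambda(S^2(y)x)$ (which relies on unimodularity of a factorizable $H$) and $R'_{(1)}S(R'_{(2)})\otimes R'' = 1\otimes 1$, applied one red component at a time from the innermost outwards. Without an argument of this kind your proof does not close; if those crossings did not cancel, the two functors would genuinely disagree. I would also flag that your single-coupon identities (e.g.\ that $i_H\circ(\id_{H^*}\otimes\eta)$ capped with $\lambda$ reproduces $\alpha'$) are only meaningful once the strand positions have been aligned, so they cannot be verified ``in isolation'' before the braid issue is resolved.
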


\begin{proof}
The commutativity of the diagram is clear on objects. We check it now on morphisms.
 Let us consider a ribbon graph $T : (\myuline{\epsilon},\myuline{V}) \to (\myuline{\epsilon'},\myuline{V'})$ with $n$ red components in $\calR_\intL$. Thanks to Lemma~\ref{L:comparison_H-L}, we have
 \begin{equation}\label{eq:F-Lambda-H}
  F_\intL(T) = F_\calC(\tilde{T}_{(H,\ldots,H)}) \circ \left( (\lambda \otimes 1)^{\otimes n} \otimes \id_{F_\calC(\myuline{\epsilon},\myuline{V})} \right),
 \end{equation}
 where $\tilde{T}$ is an $n$-bottom graph presentation of $T$, where $\tilde{T}_{(H,\ldots,H)}$ is obtained from $\tilde{T}$ by labeling all its red edges by the regular representation $H$, and where $F_\calC$ denotes the Reshetikhin-Turaev functor, which can be evaluated against $\tilde{T}_{(H,\ldots,H)}$, forgetting the difference between red and blue. On the other hand, we can turn the $n$-bottom graph presentation $\tilde{T}$ into an $n$-string link graph presentation as follows
 \[
  \pic{string_link_1}
 \]
 where $\beta$ is the $H$-labeled red $2n$-braid
 \[
  \beta := \pic{string_link_beta}
 \]
 Remark that, strictly speaking, $\tilde{T}$ is not a morphism of $\calR_\intL$, and so $F_\calR(\tilde{T})$ is not actually defined. What we mean by this notation is that $F_\calR(\tilde{T})$ is obtained from $\tilde{T}$ by following the procedure for the definition of $F_\calR$. Now
 \begin{align}\label{eq:F-lambda-H}
  F_\lambda(F_\calR(T)) 
  &= F_\calC(F_\calR(\tilde{T})) \circ \left( \left( F_\calC(\beta) \circ (\lambda^{\otimes n} \otimes 1^{\otimes n}) \right) \otimes \id_{F_\calC(\myuline{\epsilon},\myuline{V})} \right),
 \end{align}
 where again $F_\calC$ can be evaluated against $F_\calR(\tilde{T})$, forgetting the difference between red and blue. 
 We now compare this expression with~\eqref{eq:F-Lambda-H}. 
 First of all, the fact that 
 \[
  F_\calC(F_\calR(\tilde{T})) = F_\calC(\tilde{T}_{(H,\ldots,H)})
 \]
 is clear from the definition of $F_\calR$. Furthermore, we claim the braid $\beta$ contributes trivially to the computation of $F_\lambda(F_\calR(T))$. Indeed, this is a direct consequence of the following general argument: for all objects $U,V,W \in \calC$ let us denote with 
 \[
  U \otimes V \otimes \_^* \otimes \_ \otimes W : \calC^\op \times \calC \to \calC
 \]
 the functor sending every object $(X,Y)$ of $\calC^\op \times \calC$ to the object $U \otimes V \otimes X^* \otimes Y \otimes W$ of $\calC$. Let $Z \in \calC$ be an object and 
 \[
  \alpha : U \otimes V \otimes \_^* \otimes \_ \otimes W \din Z
 \]
 be a dinatural transformation. Then, let us consider the dinatural transformation 
 \[
  \alpha_c : U \otimes \_^* \otimes \_ \otimes V \otimes W \din Z
 \]
 whose component $\alpha_{c,X} : U \otimes X^* \otimes X \otimes V \otimes W \to Z$ is given by
 \[
  \pic{triviality_braid}
 \]
 for every object $X \in \calC$. Then, for all $u \in U$, $v \in V$, and $w \in W$, we have
 \begin{align*}
  \alpha_{c,H} \left( u \otimes \lambda \otimes 1 \otimes v \otimes w \right) 
  &= \alpha_H \left( u \otimes R'' \cdot v \otimes \lambda(S(R'_{(1)}) \_ ) \otimes R'_{(2)} \otimes w \right) \\*
  &= \alpha_H \left( u \otimes R'' \cdot v \otimes \lambda(S(R'_{(1)}) \_ R'_{(2)}) \otimes 1 \otimes w \right) \\*
  &= \alpha_H \left( u \otimes R'' \cdot v \otimes \lambda(S^2(R'_{(2)}) S(R'_{(1)}) \_ ) \otimes 1 \otimes w \right) \\*
  &= \alpha_H \left( u \otimes R'' \cdot v \otimes \lambda(S(R'_{(1)} S(R'_{(2)})) \_ ) \otimes 1 \otimes w \right) \\*
  &= \alpha_H \left( u \otimes v \otimes \lambda \otimes 1 \otimes w \right),
 \end{align*}
 where the second equality follows from the fact that $\alpha$ is dinatural, and the third and fourth ones from the identities 
 \[
  \lambda(xy) = \lambda(S^2(y)x), \qquad
  R'_{(1)} S(R'_{(2)}) \otimes R'' = 1 \otimes 1
 \]
 respectively, which hold for every $x,y \in H$. This argument allows us to ignore all crossings belonging to the braid $\beta$, and it can be applied one red component at a time. Start from the middle one with respect to the source of the braid $\beta$, then move on to the one embracing it, and so on until all crossings have been considered.
We thus conclude that $F_\intL(T)$ from~\eqref{eq:F-Lambda-H} indeed equals $F_\lambda(F_\calR(T))$ in~\eqref{eq:F-lambda-H}.
\end{proof}

Finally, if we temporarily adopt the notation $\adCob_\calC^\rmL$ for the admissible cobordism category recalled in Section~\ref{sss:admissible_cobord}, and if $\adCob_\calC^\rmH$ denotes the one defined in \cite[Sec.~3.3]{DGP17}, then the functor $F_\calR : \calR_\intL \to \calR_\lambda$ induces a functor 
\[
 F_\Cob : \adCob_\calC^\rmL \to \adCob_\calC^\rmH.
\]
Let us also temporarily adopt the notation $\rmV_\calC^\rmL$ for the TQFT recalled in Section~\ref{sss:TQFT-from-universal}, and let $\rmV_\calC^\rmH$ denote the one defined in \cite[Sec.~3.3]{DGP17}

\begin{proposition}
 The diagram
 \begin{center}
 \begin{tikzpicture}[descr/.style={fill=white}]
  \node (P1) at (165:2.25) {$\adCob_\calC^\rmL$};
  \node (P2) at (195:2.25) {$\adCob_\calC^\rmH$};
  \node (P3) at (0:0) {$\Vect_\Bbbk$};
  \draw
  (P1) edge[->] node[left] {\scriptsize $F_\Cob$} (P2)
  (P1) edge[->] node[above] {\scriptsize $\rmV_\calC^\rmL$} (P3)
  (P2) edge[->] node[below] {\scriptsize $\rmV_\calC^\rmH$} (P3);
 \end{tikzpicture}
 \end{center}
 commutes.
\end{proposition}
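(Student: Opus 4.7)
The plan is to reduce the statement, via the universal construction of Section~\ref{sss:TQFT-from-universal}, to the equality of the underlying renormalized $3$-manifold invariants, and then to deduce the latter from Proposition~\ref{P:equivalence_H-L}. First I would unpack the definition of $F_\Cob$: on objects it acts essentially as the identity (objects of $\adCob_\calC^\rmL$ and $\adCob_\calC^\rmH$ are the same decorated surfaces), and on morphisms it applies $F_\calR$ fibrewise to the embedded bichrome graph (relabeling red edges by the regular representation~$H$ and inserting the $\alpha,\alpha'$ bichrome coupons), leaving the underlying $3$-manifold and signature defect unchanged. Hence $F_\Cob$ is a bijection on hom-sets modulo the precise form of bichrome graphs in the two conventions.

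Next, I would verify that the invariants of closed admissible decorated $3$-manifolds agree, i.e.\ $\rmL'_\calC(\bbM) = \rmL_\calC^{\rmH}(F_\Cob(\bbM))$ for every closed morphism $\bbM$ of $\adCob_\calC^\rmL$. Both invariants are defined via the same recipe: pick a surgery link presentation, use a cutting presentation to express the value on closed bichrome graphs via the modified trace of a morphism in $\End_\calC(V)$ for some $V\in\Proj(\calC)$, and normalize by stabilization coefficients $\calD$ and $\delta$. By Proposition~\ref{P:equivalence_H-L} we have $F_\intL = F_\lambda \circ F_\calR$, so the two resulting morphisms obtained from any cutting presentation coincide. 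The modified trace $\rmt$ is intrinsic to $\calC$ and normalized identically in both papers via $\rmt_H(f) = \lambda(g f(1))$ (Section~\ref{subsec:Hopf}); the stabilization coefficients $\Delta_\pm$ match because they are computed by applying $F_\intL$ and $F_\lambda$ to the same $\pm 1$-framed red unknot, which are intertwined by $F_\calR$. Consequently the invariants of closed morphisms in $\adCob_\calC^\rmL$ and $\adCob_\calC^\rmH$ are identified via $F_\Cob$.

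Once the closed invariants are identified, the commutativity of the diagram is essentially formal. Indeed, the universal construction produces $\rmV_\calC^\rmL(\bbSigma)$ as a quotient of the free vector space on morphisms $\varnothing\to\bbSigma$ by the radical of the pairing $\langle\_,\_\rangle_{\bbSigma}$ built from the closed invariant; and $\rmV_\calC^\rmH$ is built the same way from its own closed invariant. The bijection $F_\Cob$ identifies the generating sets and, by the previous step, intertwines the two pairings; therefore it descends to a natural isomorphism $\rmV_\calC^\rmL(\bbSigma)\cong\rmV_\calC^\rmH(F_\Cob(\bbSigma))$ that, by construction, intertwines the actions of morphisms on both sides.

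The main potential obstacle is bookkeeping at the level of signature defects and the normalizations embedded in the definitions of $\rmL'_\calC$ and its counterpart in~\cite{DGP17}. In particular, one must check that the two constructions use the same convention for $\calD^{-1-\ell}\delta^{-\sigma}$ and the same prescription for $\delta^n$ on non-connected closed morphisms as in~\eqref{eq:L'-on-cobordisms}; since both follow \cite{BHMV95} and Proposition~\ref{P:equivalence_H-L} forces $\Delta_\pm$ (hence $\calD$ and $\delta$) to agree, these conventions match on the nose. Beyond this, the argument is a direct naturality check: one only needs to chase a basis vector $[\bbM_{\bbSigma}]\in\rmV_\calC^\rmL(\bbSigma)$ through both sides of the diagram and apply the identification of closed invariants to the composed morphism $\bbM\circ\bbM_{\bbSigma}$.
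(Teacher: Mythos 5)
Your strategy is genuinely different from the paper's, and it has one unproven step that carries real weight. You reduce everything to the equality of closed invariants plus formal naturality of the universal construction, and for that you need the assertion that $F_\Cob$ is ``a bijection on hom-sets modulo the precise form of bichrome graphs in the two conventions.'' This is exactly the point that requires an argument: the bichrome graph category $\calR_\lambda$ of \cite{DGP17} is defined independently, and without essential surjectivity of $F_\calR$ (hence of $F_\Cob$) on morphisms you cannot even conclude that the induced map on state spaces is well defined. Indeed, $\bbM_{\bbSigma}$ lies in the radical of $\langle \_,\_\rangle_{\bbSigma}$ on the $\rmL$-side when it pairs to zero against all test cobordisms $\bbSigma \to \varnothing$ in $\adCob_\calC^\rmL$, whereas $F_\Cob(\bbM_{\bbSigma})$ must pair to zero against all test cobordisms in $\adCob_\calC^\rmH$; if the latter class is strictly larger than the image of $F_\Cob$, radical need not map to radical. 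Surjectivity of the induced map on state spaces requires the dual statement for cobordisms $\varnothing \to \bbSigma$. So the formal argument is incomplete until you prove that every morphism of $\adCob_\calC^\rmH$ is equivalent (at least modulo the radical) to one in the image of $F_\Cob$.

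The paper sidesteps this entirely by comparing the two TQFTs through their already-established algebraic models: $\rmV_\calC^\rmL$ is identified with a quotient of $\calC(P_{\one},\ad^{\otimes g}\otimes V)$ and $\rmV_\calC^\rmH$ with a quotient of $\calC(H,X^{\otimes g}\otimes V)$, where $X$ is the pre-dual coadjoint representation used in \cite{DGP17}. The content of the proof is then the construction of an explicit linear isomorphism $F_\calX$ between these morphism spaces --- built from the inverse antipode $S^{-1}:\ad \to X$, the Radford map $\rho$, the inverse Drinfeld map $D^{-1}$, and the projection $\pi_{\one}: H \to P_{\one}$ --- together with the verification that it intertwines the two pairings $\langle\_,\_\rangle^{\rmL}_{g,V}$ and $\langle\_,\_\rangle^{\rmH}_{g,V}$. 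Your proposal contains none of this convention-matching (different representations, different placement of tensor factors, differently normalized pairings), which is where the actual work lies. Your observation that the closed invariants agree via Proposition~\ref{P:equivalence_H-L} and that $\Delta_\pm$, $\calD$, $\delta$ and the modified trace coincide is correct and is implicitly used, but on its own it does not deliver the commutativity of the diagram.
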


\begin{proof}
 The claim follows from the explicit identification of state spaces of \cite[Sec.~4.7]{DGGPR19}, which uses a slightly different convention with respect to the one recalled in Section~\ref{sss:skein-alg-summary}, and from the one of \cite[Sec.~3.7]{DGP17}. Remark that the latter uses the \textit{pre-dual coadjoint representation} $X$ instead of $\ad$. As a vector space, $X$ coincides with $H$ itself, just like $\ad$, but it carries the left $H$-action defined by
 \[
  x \cdot y := x_{(2)}yS^{-1}(x_{(1)})
 \]
 for all $x,y \in H$, see \cite[Sec.~3.1]{DGP17}. However, the inverse antipode $S^{-1} : \ad \to X$ defines an isomorphism between the two representations.
 Then, in order to finish the proof, we just need to relate 
 the pairing $\langle \_,\_ \rangle_{g,V}^\rmL$ of \cite[Sec.~4.1]{DGGPR19} to the pairing $\langle \_,\_ \rangle_{g,V}^\rmH$ of \cite[Sec.~3.1]{DGP17}. Let us consider the linear isomorphism
 \[
  F_\calX : \calC(P_{\one},\ad^{\otimes g} \otimes V) \to \calC(H,X^{\otimes g} \otimes V)
 \]
 which sends $x \in \calC(P_{\one},\ad^{\otimes g} \otimes V)$ to 
 \[
  ((S^{-1} \circ \rho^{-1} \circ D^{-1} \circ \rho^{-1} \circ D^{-1})^{\otimes g} \otimes \id_V) \circ x \circ \pi_{\one} \in \calC(H,X^{\otimes g} \otimes V),
 \]
 where $\rho : \ad \to \coad$ is the \textit{Radford map} sending $x \in \ad$ to $\lambda(S(x) \_) \in \coad$, and where $\pi_{\one} : H \to P_{\one}$ is the projection onto the indecomposable summand containing the cointegral $\lambda^\co \in H$. Remark that the isomorphism $h_X : X \to \coad$ used in \cite[Sec.~3.1]{DGP17} to define $\langle \_,\_ \rangle_{g,V}^\rmH$ is given by $\rho \circ D \circ \rho \circ S$, and that the injective envelope morphism $\eta_{\one} : \one \to P_{\one}$ satisfies $\eta_{\one}(1) = \pi_{\one}(\lambda^\co)$. Then, we clearly have
 \[
  \langle x',x \rangle_{g,V}^\rmL = \langle x',F_\calX(x) \rangle_{g,V}^\rmH. \qedhere
 \]
\end{proof}

\addtocontents{toc}{\protect\setcounter{tocdepth}{2}}

\end{document}